\documentclass[reqno, a4paper]{amsart}     
\usepackage[utf8x]{inputenc}
\usepackage{setspace}
\usepackage[left=3cm, right=3cm, bottom=3cm, top=3cm]{geometry}         

\usepackage{mathtools}
\usepackage{amsfonts}
\usepackage{amsmath}
\usepackage{amssymb}
\usepackage{amsthm}
\usepackage{enumitem}
\usepackage{xcolor}
\usepackage{mathrsfs}
\usepackage{bm}
\usepackage{float}
\usepackage[colorinlistoftodos,prependcaption,textsize=tiny]{todonotes}

\usepackage{hyperref}
\hypersetup{
  colorlinks = true,
  linkcolor = blue
}

\def\R{\mathbb{R}}

\def\Z{\mathbb{Z}}
\def\C{\mathbb{C}}

\def\SS{\mathbb{S}}
\def\T{\mathbb{R}/\mathbb{Z}}
\def\X{\mathbb{X}}
\def\P{\mathcal{P}}
\def\supp{{\rm supp}}
\def\F{\operatorname{F}}

\renewcommand{\d}{\text{\rm d}}
\renewcommand{\epsilon}{\varepsilon} %

\newcommand{\dD}{\partial\mathbb{D}}
\newcommand{\Sd}{\mathbb{S}^d}

\newcommand{\x}{{\bm x}}

\renewcommand{\v}{{\bm v}}
\newcommand{\xxi}{{\bm \xi}}

\newcommand{\wt}{\widetilde}   
\newcommand{\muarc}{\overset{\smallsmile}{\mu}}
\newcommand{\nuarc}{\overset{\smallsmile}{\nu}}
\newcommand{\lambdarc}{\overset{\smallsmile}{\lambda}}

\renewcommand{\r}{\operatorname{r}}

\theoremstyle{plain}
\newtheorem{theorem}{Theorem}

\newtheorem{corollary}[theorem]{Corollary}
\newtheorem{proposition}[theorem]{Proposition}
\newtheorem{lemma}[theorem]{Lemma}

\theoremstyle{definition}

\newtheorem*{definition*}{Definition}

\newtheoremstyle{remarkstyle}  %
  {\topsep}                    %
  {\topsep}                    %
  {\normalfont}                %
  {}                           %
  {}                           %
  {.}                          %
  { }                          %
  {\textsc{#1}#2\thmnote{(#3)}}  %

\theoremstyle{remarkstyle}
\newtheorem*{remark}{Remark}

\numberwithin{equation}{section}

\allowdisplaybreaks

\title[Sharp sign uncertainty for trigonometric polynomials]{Sharp sign uncertainty for trigonometric polynomials}

\author[Ismoilov]{Tolibjon Ismoilov}

\address{SISSA - Scuola Internazionale Superiore di Studi Avanzati, Via Bonomea 265, 34136 Trieste, Italy}
\email{tolibjon.ismoilov@sissa.it}
\email{tolibjon.iismoilov@gmail.com}

\date{\today}                                         
\begin{document}

\subjclass[2010]{42A16, 41A55, 42C05, 42A05, 46E22}
\keywords{Sign uncertainty, Hilbert spaces of polynomials, reproducing kernel, orthogonal polynomials, trigonometric polynomials.}
\begin{abstract}
We study sign uncertainty principles for trigonometric polynomials of prescribed degree $N$ with respect to a symmetric Borel measure $\mu$ on the unit circle $\T$. For each such measure, we determine the smallest radius of the last sign change for trigonometric polynomials with non-positive $\mu$-integral. We further extend these results to polar measures on higher-dimensional spheres $\mathbb{S}^d$, showing that the extremal problem reduces to the one-dimensional case via the polar part of the measure, and we establish a polynomial analogue on $[0,1]$ using orthogonal polynomials on the real line. 
\end{abstract}

\maketitle

\section{Introduction}
\subsection{Background} 
The Fourier uncertainty principles in mathematics describe the inability to control a function and its Fourier transform simultaneously. A classical instance is the Heisenberg inequality, which asserts that the mass of a function $f\in L^2(\R^d)$ and that of its Fourier transform $\widehat{f}$ cannot be localized near the origin simultaneously; see \cite{FollandSitaram1997TheUncertaintyPrinciple}. Within the framework of Fourier uncertainty, there is a concept known as \emph{sign uncertainty}. This refers to the idea that, under certain conditions, it is impossible for both a function and its Fourier transform to have negative mass concentrated near the origin. Such a phenomenon was first observed by Bourgain, Clozel, and Kahane \cite{Bourgain2010PrincipeDHeisenbergFonctionsPositives} in 2010. To describe it concretely, we recall some terminology.

\smallskip

Throughout this paper, we will use the following convention for defining eventually non-negative functions. Let $\mathbb{X}$ be either the euclidean space $\R^d$, the sphere $\Sd$, the unit circle $\T$ or the unit interval $[0, 1]$ 
equipped with its standard metric $\mathbf{\mathsf{d}}(\cdot, \cdot)$. Let $x^\ast\in \X$ be a distinguished point to serve as the origin or the north pole. We say a function $f:\X \to \R$ is \emph{eventually non-negative} if there exists a radius $r>0$ such that $f(x)\geq 0$ for all $x\in \X\setminus B_r(x^\ast)$, where $B_r(x^\ast)\coloneqq\bigl\{y\in \X \; :\; \mathbf{\mathsf{d}}(x^\ast, y)<r  \bigr\}$. The smallest such radius is called \emph{the radius of last sign change} and denoted by 
\begin{equation}\label{eq:definition-of-r-of-f}
    \r(f)\coloneqq \inf\bigl\{r>0 \; :\; f(x)\geq 0 \text{ for all } x\in \X\setminus B_r(x^\ast)  \bigr\}.
\end{equation}
Recall that for a function $f\in L^1(\R^d)$ we define its Fourier transform as 
\begin{equation*}
    \widehat{f}(\xxi)\coloneqq\int_{\R^d} e^{-2\pi i \xxi\cdot \x} f(\x)\,\d \x.
\end{equation*}

\smallskip
In \cite{Bourgain2010PrincipeDHeisenbergFonctionsPositives}, the authors proved that if $f:\R^d\to\R$ is integrable, with $\widehat{f}$ real-valued and integrable, satisfies
$f(0)\leq 0$, $\widehat{f}(0)\leq 0$, and both $f$ and $\widehat{f}$ are eventually
non-negative, then the product of their radii of last sign change is bounded from below by a positive
absolute constant:
\begin{equation}\label{eq:Bourgain-Clozel-Kahane}
  \r(f)\cdot \r(\widehat{f}) \geq \mathbb{A}(d) > 0.
\end{equation}
Finding the sharp value of $\mathbb{A}(d)$ in \eqref{eq:Bourgain-Clozel-Kahane} is a notoriously difficult problem. The only dimension in which the exact sharp constant is known is $d=12$, where Cohn and Gonçalves \cite{CohnGoncalves2019-AnOptimalUncertaintyPrinciple} proved $\mathbb{A}(12)=\sqrt{2}$, exploiting the machinery of Viazovska on modular forms  \cite{Viazovska2017-TheSpherePackingProblemInDimension8}.

\smallskip

The sign uncertainty phenomenon is not confined to the Euclidean setting. Gon\c{c}alves, Oliveira e Silva, and Ramos \cite{Goncalves2023NewSignUncertaintyPrinciples} vastly generalized the framework of Bourgain--Clozel--Kahane by establishing sign uncertainty principles for a broad collection of operators and function spaces. Their results cover Fourier and Dini series on the circle, the Hilbert transform, the discrete Fourier and Hankel transforms, spherical harmonics, and Jacobi polynomials, among others. Related quantitative aspects of the Euclidean sign uncertainty principle were further studied in \cite{CarneiroQuesada-Herrera2023GeneralizedSign,GoncalvesOliveiraESilvaRamos2021-OnRegularityAndMass, GoncalvesOliveira-e-SilvaSteinerberger-HermitePolynomialsLinearFlows}.

\smallskip
 
A central observation motivating the present work is that the underlying measure plays a decisive role in determining whether sign uncertainty can occur at all. This was made precise in the recent work of Carneiro, Ramos, and the author in \cite{Carneiro2024SignUncertaintyBrangesSpaces}, where they studied sign uncertainty for entire functions of exponential type with a competing condition expressed as integration with respect to an even Borel measure $\nu$ on $\R$. They established that certain choices of $\nu$ can eliminate the sign uncertainty phenomenon. Conversely, for a broad class of measures, they established sharp sign uncertainty bounds by bridging the problem and the theory of de Branges spaces of entire functions, where the reproducing kernel structure allows one to identify the sharp constants and classify the extremizers explicitly. 

\smallskip

The present article takes up the sign uncertainty phenomenon in the framework of \cite{Carneiro2024SignUncertaintyBrangesSpaces} to the compact setting of the unit circle $\T$, where entire functions of exponential type are replaced by \emph{trigonometric polynomials} of a prescribed degree, and the competing condition is given by a symmetric finite Borel measure $\mu$ on $\T$. Our main results provide a complete characterization of the sign uncertainty phenomenon for each measure and determine the sharp constants in the cases where sign uncertainty holds, complementing the body of work described above.

\subsection{An extremal problem for trigonometric polynomials} 

We identify the circle $\SS^1$ and the quotient space $\T$ in such a way that the distinguished point $x^\ast\in\SS^1$ corresponds to $0\in \T$. Let $\mu$ be a finite Borel measure on $\T$, which satisfies the symmetry condition
\begin{equation}\label{eq:symmetry-condition}
    \mu(A)=\mu(-A), \text{ for all } A\subseteq \T.
\end{equation}

We consider the following problem:
\begin{enumerate}[label=\textbf{(P\arabic*)}, ref=(P\arabic*)]
    \item \label{prob:trig-polynomials}
    For each fixed $N\geq 1$,  find the exact value of 
    \begin{equation*}
        \rho(\mu, N)\coloneqq \inf_{0\not \equiv f\in \mathcal{T}(\mu, N) } \r(f),
    \end{equation*}
    where the infimum is taken over the following family of trigonometric polynomials
    \begin{equation*}
        \mathcal{T}(\mu, N)\coloneqq\left\{ f(x)=\sum_{n=-N}^N \widehat{f}(n) \, e^{2\pi i n x}\; \left\lvert\; 
        \begin{aligned}
        & f:\T\to\R \text{ is eventually non-negative}\\
        &\text{ and } \int_{\T} f(x)\, \d \mu(x)\leq 0  
        \end{aligned}
        \right.
        \right\}.
    \end{equation*}
\end{enumerate}
Note that the metrics on the circle $\SS^1$ and on $\T$ differ by a factor of $2\pi$, and in the above problem, we are dealing with the radius of the last sign change on $\T$. The following observations allow us to restrict our attention to a smaller family of trigonometric polynomials.
\begin{remark}\label{rem: general remarks}\hfill
    \begin{enumerate}
        \item Note that  $\rho(c\mu, N)=\rho(\mu, N)$ for any $c>0$ and $N\geq 1$. Thus, it suffices to work with probability measures;
        \item The even part $f_e(x)\coloneqq \tfrac{f(x)+f(-x)}{2}$ of a function $f\in \mathcal{T}(\mu, N)$ satisfies $f_e\in \mathcal{T}(\mu, N)$ and $\r(f_e)\leq \r(f)$;
        \item If $f\in \mathcal{T}(\mu, N)$, then the function $f_0(x)= f(x)-\int_{\T} f(t)\,\d\mu(t)$ also belongs to the class $\mathcal{T}(\mu, N)$ and $\r(f_0)\leq \r(f)$; 
    \end{enumerate}
\end{remark}

Consider the case where $\d x$ is the Lebesgue measure on $\T$. Take a non-trivial function $f\in \mathcal{T}(\d x , N)$ and let $f_{\pm}(x)=\max\{\pm f(x), 0\}$ denote the positive and negative parts of $f$. Since $f$ has a non-positive integral, we have
\begin{equation*}
\begin{split}
\lVert f\rVert_{L^1}\leq  2\int_{\T}f_{-}(x)\, \d x&=2\int_{-\r(f)}^{\r(f)} f_{-}(x)\,\d x\leq 4 \r(f) \lVert f\rVert_{L^\infty}\\
&\leq 4(2N+1) \r(f) \max_{|n|\leq N} \lvert \widehat{f}(n)\rvert \leq 4(2N+1) \r(f)  \lVert f\rVert_{L^1}.
\end{split}
\end{equation*}
It implies that $\r(f)\geq 1/(4(2N+1))$ for any $f\in \mathcal{T}(\d x, N)$. Clearly, it shows the presence of the sign uncertainty phenomenon for the Lebesgue measure. In fact, this particular case of the problem has been considered in works \cite{Babenko1984ExtremalProblemPolynomials, YudinTwoExternalProblemsTrigonometric1996}.  In this paper, we are going to establish a more general framework that allows us to identify the exact value of $\rho(\mu, N)$ for any finite, even Borel measure $\mu$ on $\T$. In particular, we will see that $\rho(\d x, N)=1/(2N+2)$.

\medskip

\subsection{Orthogonal polynomials on the unit circle}\label{subsec: opuc basics}
To state our main results, we recall several important concepts from the classical theory of orthogonal polynomials on the unit circle (OPUC). The theory of OPUC will play a key role in our results, as the theory of de Branges spaces \cite{deBranges1968-HilbertSpacesOfEntireFunctions} was crucial in finding the smallest radius of last sign change for bandlimited functions \cite{Carneiro2024SignUncertaintyBrangesSpaces}. Here we follow a classic reference \cite{SimonPart1OrthogonalPolynomialsUnitCircle2005} on the topic. A more detailed discussion on this topic will be presented in $\mathsection \ref{subsec: opuc in detail}$.

\subsubsection{Setup} We denote by $\mathbb{D}$ the unit disk $\{z\in \C: |z|<1\}$ and by $\dD$ the unit circle $\{z\in \C: |z|=1\}$. Throughout the manuscript, we use the parametrization $\{e^{2\pi i \theta}: \theta \in \T\}$ of the unit circle $\dD$. Let $\mu$ be a probability measure on $\dD$ (equivalently on $\T$) and set
\begin{equation*}
    \int_{\dD} f(z)\,\d\mu(z)\coloneqq \int_{\T} f(e^{2\pi i \theta}) \,\d\mu(\theta),
\end{equation*}
for any measurable function $f:\dD\to \C$. We say a probability measure $\mu$ on $\dD$ is \emph{trivial} if its support is finite, i.e.  $\#\supp(\mu)<\infty$. 

\subsubsection{Orthogonal polynomials} For a probability measure $\mu$ on $\dD$ consider the Hilbert space $L^2(\dD, \d\mu)$ equipped with the inner product 
\begin{equation*}
    \langle f, g\rangle\coloneqq \int_{\dD} f(z)\overline{g(z)}\,\d\mu(z).
\end{equation*}
Since any bounded function on $\dD$ belongs to $L^2(\dD, \d\mu)$, we know that polynomials $1, z, z^2, \ldots$ lie in this Hilbert space. As long as the measure $\mu$ is non-trivial, by applying the Gram--Schmidt process, one can construct the unique \emph{monic orthogonal polynomials}  $\Phi_n(z)\coloneqq\Phi_n(z; \d\mu)$, for all $n\geq 0$,  in $L^2(\dD, \d\mu)$. Clearly by definition $\Phi_0(z)\equiv 1$ and we have $\Phi_n(z)=z^n+\text{lower order terms}$, and
\begin{equation*}
    \int_{\dD} \Phi_n(z)\, \overline{z^j}\,\d\mu(z)=0, \quad 0\leq j<n.
\end{equation*}
We also consider the sequence of \emph{orthonormal polynomials} $\varphi_n(z)\coloneqq \Phi_n(z)/\lVert \Phi_n\rVert_2$. 

\smallskip

Note that for a trivial measure $\mu$, with $\#\supp(\mu)=M$, one can construct the polynomials $\Phi_0, \Phi_1, \ldots, \Phi_M$ uniquely by the same procedure, and the process terminates exactly at $\Phi_M$, since it vanishes at every point in the support of $\mu$. Thus, the only well-defined orthonormal polynomials are $\varphi_0, \varphi_1, \ldots, \varphi_{M-1}$. 

\subsubsection{Auxiliary polynomials}
Let $P(z)$ be a polynomial of degree $n\geq 0$, given by 
\begin{equation*}
    P(z)=a_n z^n+a_{n-1}z^{n-1}+\ldots +a_1 z+a_0.
\end{equation*} 
For an integer $m\geq n$ define the \emph{reversed polynomial} $P^{*, m}(z)$ by 
\begin{equation*}
    P^{*, m}(z)\coloneqq z^m \overline{P(1/\overline{z})}=\overline{a_0} z^m+\overline{a_{1}}z^{m-1}+\ldots +\overline{a_{n-1}} z^{m-n+1}+\overline{a_n}z^{m-n}.
\end{equation*}
For simplicity we denote $P^{*}=P^{*, n}$.

\smallskip

Whenever $\mu$ is a probability measure and the unique orthonormal polynomials $\varphi_n(z; \d\mu)$ exist, then one has 
\begin{equation}\label{eq:hermite-biehler in the intro}
    \lvert\varphi_n(z)\rvert < \lvert \varphi_n^*(z)\rvert
\end{equation}
for all $z\in \mathbb{D}$ (see Lemma \ref{lem:properties of orthogonal polynomials} below). Based on these facts, we define the following auxiliary polynomials. Whenever $\varphi_n(z; \d\mu)$ is well-defined as in the procedure described above, we set 
\begin{equation*}
    A_{n}(z;\d\mu)=\frac{\varphi_n(z;\d\mu)+\varphi_n^*(z;\d\mu)}{2}.
\end{equation*}
In the particular case of a trivial measure $\mu$ with $\#\supp(\mu)=n$, we use the following definition instead 
\begin{equation*}
    A_n(z;\d\mu)=\frac{z \varphi_{n-1}(z;\d\mu)+\varphi_{n-1}^\ast(z;\d\mu)}{2}.
\end{equation*}
Later in $\mathsection \ref{subsec: rkhs of polynomials}$, we will provide a complete justification for these polynomials and for their role in the extremal problem \ref{prob:trig-polynomials}. Note that \eqref{eq:hermite-biehler in the intro} implies that $A_n(z)$ has no zeros in the unit disk. On the other hand, the symmetry $A_n=A^*_n$ implies all zeros of $A_n$ lie on $\dD$.

\subsubsection{Main result}
Having introduced all the components, we state our main result concerning the extremal problem \ref{prob:trig-polynomials}. We split the statement of the result into two cases based on the support of the measure.

\begin{theorem}\label{thm:best-radius-of-negativity}
    Let $\mu$ be a finite Borel measure on $\T$ which satisfies \eqref{eq:symmetry-condition}. The following holds
    \begin{enumerate}[label=\textnormal{(\roman*)}]
        \item If  $1\leq N<\#\supp(\mu)$, then we have 
    \begin{equation*}
        \rho(\mu, N)=\theta_1,
    \end{equation*}
    where $0\leq \theta_1\leq 1/2$ is such that $e^{2\pi i \theta_1}$ is a zero of $A_{N+1}(z;\d\mu)$ which is at a minimal distance from the point $1\in\dD$, i.e., 
    \begin{equation*}
        \theta_1=\min\{|\theta|\, : \, A_{N+1}(e^{2\pi i \theta})=0\}.
    \end{equation*}
    Moreover, the extremizer is unique (up to a constant factor) and given by 
    \begin{equation}\label{eq:extremizer-function}
       f(x)=\frac{|A_{N+1}(e^{2\pi i x})|^2}{\cos(2\pi \theta_1)-\cos(2\pi x)}.
    \end{equation}
    \item If $\#\supp(\mu)\leq N<\infty$, then we have $\rho(\mu, N)=0$.
    \end{enumerate}
    
\end{theorem}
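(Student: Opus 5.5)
We will prove part (i) by following the de Branges-space strategy of \cite{Carneiro2024SignUncertaintyBrangesSpaces}, with the space $\mathcal{P}_N$ of polynomials of degree at most $N$ in $L^2(\dD,\d\mu)$ in place of the de Branges space. By Remark \ref{rem: general remarks} we may assume that $\mu$ is a probability measure and that $f$ is even. An even real trigonometric polynomial of degree $N$ can be written as $f(x)=g(\cos 2\pi x)$ with $g$ a real algebraic polynomial of degree $N$. The key tool is a Gauss-type quadrature on $\dD$ built from the zeros of $A_{N+1}$. Since $A_{N+1}=A_{N+1}^*$ and the zeros are simple and lie on $\dD$, let them be $e^{2\pi i\theta_j}$, $j=1,\dots,N+1$. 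Because $\mu$ is symmetric, the Verblunsky coefficients are real, so the zero set is closed under conjugation. We will show that there are weights $\lambda_j>0$, given by the reciprocal Christoffel function $\lambda_j=K_N(e^{2\pi i\theta_j},e^{2\pi i\theta_j})^{-1}$, such that
\begin{equation*}
\int_{\T} h\,\d\mu=\sum_{j}\lambda_j\, h(\theta_j)
\end{equation*}
holds for every trigonometric polynomial $h$ of degree at most $N$. This is the standard paraorthogonal (Szegő) quadrature. To derive it, we use the reproducing kernel $K_N(z,w)=\sum_{k\le N}\varphi_k(z)\overline{\varphi_k(w)}$, the Christoffel--Darboux formula, and the fact that $A_{N+1}$ is a unimodular combination of $z\varphi_N$ and $\varphi_N^*$. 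This gives the orthogonal basis $\{K_N(\cdot,\zeta_j)\}$ of $\mathcal{P}_N$. Exactness for degree-$N$ trigonometric polynomials then follows by writing $h(\theta)=z^{-N}p(z)$ with $p$ of degree at most $2N$. In the degenerate case where $\#\supp\mu=N+1$, the same construction with the modified $A_{N+1}$ makes the nodes coincide with $\supp\mu$.

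For the lower bound, suppose $f\in\mathcal{T}(\mu,N)$ is nonzero with $\r(f)<\theta_1$. Then $f\ge0$ at every node, since every node satisfies $|\theta_j|\ge\theta_1$. The quadrature gives $0\ge\int f\,\d\mu=\sum_j\lambda_j f(\theta_j)\ge0$, so $f$ vanishes at all nodes. Because $f$ is nonnegative near each node $\theta_j$ with $|\theta_j|>\r(f)$, each such zero has even multiplicity. Counting zeros, $z^Nf$ is a polynomial of degree $2N$, but it would have at least $2(N+1)$ zeros counted with multiplicity, so $f\equiv0$. Some care is needed at the nodes $\pm\theta_1$ and at a node $1/2$, where the double-zero argument still applies. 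This contradiction shows $\rho(\mu,N)\ge\theta_1$.

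The same argument also gives uniqueness. If $\r(f)=\theta_1$, then $f$ vanishes at all nodes, with double zeros at every node other than $\pm\theta_1$. That accounts for $2(N+1)-2=2N$ zeros, so $z^N f$ is a constant multiple of $|A_{N+1}|^2/(\cos2\pi\theta_1-\cos2\pi x)$, which is the function \eqref{eq:extremizer-function}. It remains to check that this function lies in $\mathcal{T}(\mu,N)$. It has degree $N$, because $|A_{N+1}|^2$ has degree $N+1$ in $\cos$ and is divisible by the factor vanishing at $\pm\theta_1$. It is $\ge0$ for $|x|\ge\theta_1$, and by quadrature its $\mu$-integral is $0$. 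I expect the main obstacle to be the quadrature step: establishing positivity of the weights, simplicity of the zeros, and the reality and conjugation symmetry of the zero set, which needs real Verblunsky coefficients, together with the trivial-measure variant. For $\theta_1=0$, meaning $A_{N+1}(1)=0$, the extremizer is instead read as a limit.

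For part (ii), let $\supp\mu=\{\pm x_k\}$, a finite set with $M\le N$ points. Take $f(x)=\prod_k(\cos2\pi x-\cos2\pi x_k)^2$ adjusted to the correct degree. It is nonnegative, vanishes on $\supp\mu$, and has degree at most $N$ after pairing the symmetric points, so $\int f\,\d\mu=0$ and $\r(f)=0$. Note that $f\ge0$ everywhere gives $\r=0$ by the infimum convention. If a degree count forces more than $N$, we use $\prod_k(\cos2\pi x-\cos2\pi x_k)$ multiplied by a sign-correcting factor, or a limiting family with $\r\to0$.
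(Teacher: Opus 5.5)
\textbf{Part (i).} This part is correct, but your lower bound follows a different route from the paper.

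The paper uses the whole one-parameter family of para-orthogonal quadratures. Given $f$ with $\r(f)<\theta_1$, it picks $s\in(\r(f),\theta_1)$ with $f(s)>0$ and takes $\theta\equiv\varphi(s)\pmod{\pi}$, so that $e^{2\pi i s}$ is a node of $T_\theta$. It then uses the phase function and the interlacing of the zeros of $T_\theta$ with those of $T_{\pi/2}=2iA_{N+1}$ to keep every node off the arc $|x|\le\r(f)$. This yields $0\ge\int f\,\d\mu\ge f(s)/K_N(e^{2\pi i s},e^{2\pi i s})>0$.

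You use only the single quadrature with nodes at the zeros of $A_{N+1}$, and finish by counting zeros: there are $N+1$ distinct nodes, each a zero of even order, against at most $2N$ zeros for a nonzero trigonometric polynomial of degree $N$. This is sound. It is exactly the mechanism the paper itself uses for uniqueness and for the lower bound in Theorem \ref{thm:equivalence-with-quadrature-problem}. It is also lighter: it needs only simplicity of the zeros and positivity of the weights, not the interlacing lemma. So, contrary to the paper's closing remark, a continuum of quadratures is not needed here either. What the paper's route buys is independence from zero counting, which parallels the de Branges-space argument of \cite{Carneiro2024SignUncertaintyBrangesSpaces}, where no degree bounds the number of zeros. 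Your extremizer and uniqueness steps match the paper's.

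One side claim is false, though harmless if you actually run the kernel construction with the paper's $A_{N+1}$. When $\#\supp(\mu)=N+1$, the zeros of $A_{N+1}$ need not coincide with $\supp(\mu)$; they do only when $\alpha_N=-1$. For example, take $\mu=\frac12(\delta_0+\delta_{1/2})$ and $N=1$. Then $\varphi_1=z$ and $A_2=(z^2+1)/2$, so the nodes are $\pm1/4$, and indeed $\rho(\mu,1)=1/4$, not $0$.

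Also, the case $\theta_1=0$ never occurs. Real coefficients give $A_{N+1}(1)=\varphi_{N+1}(1)\neq0$, or $\varphi_N(1)\neq0$ in the finite-support case.

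\textbf{Part (ii).} As written, this part has a gap.

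Squaring $\cos 2\pi x-\cos 2\pi x_k$ costs degree $2$ per value of $\cos 2\pi x_k$. That is the right price for a pair $\pm x_k$ with $x_k\in(0,1/2)$, but an atom at $0$ or $1/2$ is a single point. So your $f$ has degree $M+\#\bigl(\supp(\mu)\cap\{0,1/2\}\bigr)$. This exceeds $N$ when, for example, $M=N$ and $0\in\supp(\mu)$; already for $\mu=\delta_0$ and $N=1$, the function $(1-\cos 2\pi x)^2$ has degree $2$.

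The fallbacks you mention do not close this. An unsquared product changes sign at every $\pm x_k$ with $x_k\in(0,1/2)$, and the ``limiting family'' is never constructed.

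The repair is easy. Keep the squared factors only for genuine pairs, and use the unsquared factors $1-\cos 2\pi x$ and $1+\cos 2\pi x$, which are already non-negative, for atoms at $0$ and $1/2$. Alternatively, as the paper does, take $f(x)=\bigl|\prod_{j=1}^{M}(e^{2\pi i x}-z_j)\bigr|^2$ over the support points $z_j$. This is non-negative, vanishes on $\supp(\mu)$, and has degree exactly $M\le N$.
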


\medskip

\subsection{Analogues in higher dimensions}
Throughout this manuscript, vectors in $\R^{d+1}$ are written in bold font 
(e.g., $\bm{x}, \bm{y}, \bm{z}$), while scalars in $\R$ or $\C$ are written in regular font (e.g., $x, y, z$). Let $d\geq 2$  and consider the $d$-dimensional sphere $\Sd \subset \R^{d+1}$ and let $\bm{v} \in \Sd$ be a distinguished point. Without loss of generality we can assume $\bm{v}=(0, 0, \ldots, 0 , 1)$. We denote by $\mathrm{SO}(d+1)$ the special orthogonal group, consisting of all $(d+1) \times (d+1)$ real orthogonal matrices with determinant equal to $1$, and by $\mathrm{SO}_{\bm{v}}(d+1) \subsetneq \mathrm{SO}(d+1)$ the stabilizer subgroup of $\bm{v}$, namely the closed subgroup of all rotations in $\mathrm{SO}(d+1)$ that fix $\bm{v}$.

\subsubsection{Polar measures}\label{subsec:polar-measures} We say a finite Borel measure $\mu$ is a  \emph{polar measure (with a pole at $\v$)} on $\Sd$ if it satisfies the following symmetry property: 
\begin{equation}\label{eq:rotation invariance}
    \mu(A)=\mu(R(A)), \quad \text{ for all } A\subseteq \Sd \text{ and for all } R\in \mathrm{SO}_{\bm{v}}(d+1),
\end{equation}
where $R(A)\coloneqq\{R x\, | \, x\in A\}$. For a polar measure $\mu$ we set its \emph{polar part} to be measure $\mu_p$ on $\T$ given by
\begin{equation*}
    \mu_p(A)\coloneqq \mu\left( A^{+, d}  \right)+\mu\left( A^{-, d}  \right) + 2 \,\mu(A^{d}\cap\{\v, -\v\}),
\end{equation*}
where the lifts $A^d$ and $ A^{\pm, d} $ are given by
\begin{align}
 A^d &\coloneqq\left\{ \bigl(\x^\prime \sin(2\pi \theta), \cos(2\pi \theta)\bigr) \, :\, \theta\in A, \; \x^\prime\in \mathbb{S}^{d-1} \right\}\label{eq:liftofasetA}
 \intertext{and}
A^{\pm, d} & \coloneqq\left\{ \bigl(\x^\prime \sin(2\pi \theta), \cos(2\pi \theta)\bigr) \, :\, \pm\theta\in A\cap (0, 1/2), \; \x^\prime\in \mathbb{S}^{d-1} \right\}.\nonumber
\end{align}
The set of all polar measures has a simple characterization in terms of their polar parts; see Proposition \ref{prop:decomposition-for-polar-measures}.
For example, the polar part of the normalized surface measure $\sigma_{d}$ is given by
\begin{equation*}
    \d\mu_{d-1}(\theta)\coloneqq\d(\sigma_{d})_p(\theta)=C_d \, \lvert \sin{(2\pi \theta)} \rvert^{d-1} \d\theta,
\end{equation*}
where $C_d>0$ is the dimensional constant. Note that the polar part of a polar measure $\mu$ is always an even measure on $\T$.

\subsubsection{Extremal problem in higher dimensions}

Denote by $\mathscr{H}_k$ the space of spherical harmonics of degree $k$ on $\mathbb{S}^d$. Each $\mathscr{H}_k$ is a finite-dimensional subspace of $L^2(\mathbb{S}^d, \d\sigma_d)$, and the span of all spherical harmonics is dense in $L^2(\mathbb{S}^d, \d\sigma_d)$; see \cite[Chapter IV]{SteinWeissFourierAnalysis}.
\medskip

Let $\mu$ be a polar measure with a pole at $\v$ on $\Sd$ and $N\geq 1$ be an integer. Consider the family $\mathcal{T}(d, \mu, N)$ of functions $f:\Sd \to \R$ that satisfy the following conditions:
\begin{enumerate}
    \item $f(\bm{x})=\sum_{k=0}^N Y_k(\bm{x})$, where $Y_k\in \mathscr{H}_k$;
    \item $f$ is eventually non-negative on $\Sd$;
    \item $\displaystyle \int_{\Sd} f(\bm{x})\, \d\mu(\bm{x})\leq 0$.
\end{enumerate}
We are interested in finding the function in this class that has the negative part concentrated in the smallest possible spherical cap, which is a ball in the geodesic metric of $\Sd$. We formulate this problem as follows: 
\begin{enumerate}[label=\textbf{(P\arabic*)}, ref=(P\arabic*), start=2]
    \item \label{prob:spherical-harmonics} For each $N\geq 1$ find the exact value of
    \begin{equation*}
    \rho(d, \mu, N)\coloneqq \inf_{0\not \equiv f \in \mathcal{T}(d, \mu, N)} \r(f),
    \end{equation*}
    where $\r(f)$ is defined as in \eqref{eq:definition-of-r-of-f} with respect to the usual metric of the $\Sd$.
\end{enumerate}

\noindent Note that the $d=1$ case of this problem can be considered, and it will be the same as the problem \ref{prob:trig-polynomials} up to a constant factor of $2\pi$ due to the differences between the metrics of $\T$ and $\SS^1$.

\medskip

Our next result allows us to say that the nature of the problem \ref{prob:spherical-harmonics} does not depend on the dimension, in the following sense. 

\begin{theorem}\label{thm:reduction-to-dimension-one}
    For every polar measure $\mu$ on $\Sd$ and for each integer $N\geq 1$, we have
    \begin{equation*}
        \rho(d, \mu, N)= 2\pi\,\rho(\mu_p, N),
    \end{equation*}
    where $\mu_p$ is the polar part of the measure $\mu$. The function $\widetilde{f}$, given by
    \begin{equation*}
        \wt f(\x)= f\left(\frac{1}{2\pi} \arccos(\x\cdot\v)\right),
    \end{equation*}
is an extremizer to the problem \textnormal{\ref{prob:spherical-harmonics}}, where $f:\T\to \R$ is defined in \eqref{eq:extremizer-function}.
\end{theorem}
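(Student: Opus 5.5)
The plan is to prove the two inequalities $\rho(d,\mu,N)\le 2\pi\rho(\mu_p,N)$ and $\rho(d,\mu,N)\ge 2\pi\rho(\mu_p,N)$ separately. Both directions rest on the fact that zonal averaging over the stabilizer $\mathrm{SO}_{\v}(d+1)$ maps degree-$\le N$ spherical polynomials to functions of $t=\x\cdot\v$ alone. Such a function is a polynomial of degree $\le N$ in $t$, that is, a linear combination of Gegenbauer polynomials $C_k^{(d-1)/2}(t)$ with $k\le N$. Writing $t=\cos(2\pi\theta)$, it becomes an even trigonometric polynomial of degree $\le N$ in $\theta\in\T$. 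Conversely, every even trigonometric polynomial $g(\theta)=\sum_{k\le N}a_k\cos(2\pi k\theta)$ is a polynomial of degree $\le N$ in $\cos(2\pi\theta)$ via Chebyshev polynomials. Hence $\x\mapsto g\bigl(\tfrac1{2\pi}\arccos(\x\cdot\v)\bigr)$ lies in $\bigoplus_{k\le N}\mathscr H_k$, because restrictions to $\Sd$ of polynomials of degree $\le N$ in $\R^{d+1}$ are exactly sums of harmonics of degree $\le N$.

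\emph{Integral identity.} For even $g$ on $\T$ and $\wt g(\x)=g(\tfrac1{2\pi}\arccos(\x\cdot\v))$, I will check from the definition of $\mu_p$ that
\begin{equation*}
\int_{\Sd}\wt g\,\d\mu=\tfrac12\int_{\T} g\,\d\mu_p .
\end{equation*}
The reason is as follows. Pushing $\mu$ forward under $\x\mapsto\theta=\tfrac1{2\pi}\arccos(\x\cdot\v)\in[0,1/2]$ gives a measure on $[0,1/2]$. The definition of $\mu_p$ places this mass symmetrically on $\pm\theta$ for $\theta\in(0,1/2)$, and doubles it at the points $\theta=0,1/2$ coming from $\pm\v$, which are their own reflections. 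If the convention at the poles gives a different constant, only the sign matters, since the constraint is "$\le0$". The radius is also compatible: geodesic distance from $\v$ equals $2\pi|\theta|$, so $\r(\wt g)=2\pi\,\r(g)$ for even $g$.

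\emph{Upper bound.} Take the extremizer $f$ of Theorem~\ref{thm:best-radius-of-negativity}, or an almost-minimizer, which may be taken even by Remark~\ref{rem: general remarks}. In case (ii), with $\rho=0$, I will take near-minimizers. Then $\wt f$ belongs to $\mathcal T(d,\mu,N)$ and satisfies $\r(\wt f)=2\pi\,\r(f)$. This gives the upper bound and shows that $\wt f$ is an extremizer.

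\emph{Lower bound.} Let $F\in\mathcal T(d,\mu,N)$ be nonzero, and average it over the Haar measure of $\mathrm{SO}_{\v}(d+1)$: $F^\sharp(\x)=\int F(R\x)\,\d R$. Three properties of $F^\sharp$ are needed.
\begin{enumerate}
\item It stays in $\bigoplus_{k\le N}\mathscr H_k$, because each $\mathscr H_k$ is rotation invariant.
\item It is nonnegative outside the cap $B_{\r(F)}(\v)$, since that cap is invariant under the stabilizer.
\item It satisfies $\int F^\sharp\,\d\mu=\int F\,\d\mu\le0$, by the invariance \eqref{eq:rotation invariance} and Fubini.
\end{enumerate}
Since $F^\sharp$ is zonal, $F^\sharp=\wt g$ for an even trigonometric polynomial $g$ of degree $\le N$. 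The integral identity gives $\int g\,\d\mu_p\le0$, so $g\in\mathcal T(\mu_p,N)$ and therefore $\r(F)\ge\r(F^\sharp)=2\pi\,\r(g)\ge2\pi\rho(\mu_p,N)$.

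\emph{Main obstacle.} The step I expect to be delicate is the case $F^\sharp\equiv0$ with $F\not\equiv0$, because the lower bound then says nothing. In that case $F$ has zero average on every orbit, which is a sphere $\{\x\cdot\v=t\}$. Since $F\ge0$ on every orbit outside the cap, $F$ must vanish identically outside $B_{\r(F)}(\v)$. If $\r(F)<\pi$, the set where $F$ vanishes has nonempty interior, and $F$ is a polynomial, so $F\equiv0$, a contradiction. If $\r(F)=\pi$, the bound holds trivially because $\rho(\mu_p,N)\le1/2$. The remaining verification is the bookkeeping of the pole masses in the identity for $\int\wt g\,\d\mu$.
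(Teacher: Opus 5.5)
Your proposal is correct and follows essentially the same route as the paper: Haar averaging over $\mathrm{SO}_{\v}(d+1)$ (which keeps the class and does not increase the radius), identifying zonal elements of $\bigoplus_{k\le N}\mathscr H_k$ with even trigonometric polynomials of degree at most $N$ via $\x\cdot\v=\cos(2\pi\theta)$, the identity $\int_{\Sd}\wt g\,\d\mu=\tfrac12\int_{\T}g\,\d\mu_p$, and the same analyticity argument (the paper phrases it via stereographic projection) to rule out $F^\sharp\equiv0$ when $\r(F)<\pi$. The only real difference is that you get the integral identity directly from the push-forward of $\mu$ under $\x\mapsto\frac{1}{2\pi}\arccos(\x\cdot\v)$, which suffices because it is only applied to zonal functions, whereas the paper proves the full disintegration $\mu=\muarc_p\otimes\sigma_{d-1}$; note that your fallback ``only the sign matters'' would not rescue a mismatched weight at the poles (a discrepancy at $\theta=0,1/2$ alone is not a global scalar factor), but the definition does give $\mu_p(\{0\})=2\mu(\{\v\})$ and $\mu_p(\{1/2\})=2\mu(\{-\v\})$, so the constant $\tfrac12$ is exact and the fallback is never needed.
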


\subsection{Analogues for polynomials}\label{sec:extremal problem for polynomials}

We now turn to the polynomial analogue of the sign uncertainty phenomenon considered in problem \ref{prob:trig-polynomials}.

\subsubsection{Setup} Let $\mu$ be a finite Borel measure on $[0, 1]$, and fix $N \geq 1$.
Define the family of real polynomials of degree at most $N$ by
\begin{equation*}
    \mathcal{P}(\mu, N) \coloneqq \left\{ p(x) = \sum_{n=0}^N a_n x^n \;\Bigg|\;
    \begin{aligned}
        & \; p:[0, 1]\to \R \text{ is eventually non-negative}, \\
        & \; \int_{0}^{1} p(x)\, \d\mu(x) \leq 0
    \end{aligned}
    \right\}.
\end{equation*}
Note that in this case the distinguished point is $x^\ast=0$. In other words, $\mathcal{P}(\mu, N)$ consists of polynomials that have non-positive $\mu$-integral and are non-negative on some left neighborhood of the point $x=1$. Again, we study the smallest possible radius of the last sign change using the following extremal problem.
\begin{enumerate}[label=\textbf{(P\arabic*)}, ref=(P\arabic*), start=3]
    \item \label{prob:polynomials}
    For a given integer $N \geq 1$, find the exact value of
    \begin{equation*}
        \Omega(\mu, N) \coloneqq \inf_{0 \not\equiv f \in \mathcal{P}(\mu, N)} \r(f),
    \end{equation*}
    where $\r(f)$ is given by \eqref{eq:definition-of-r-of-f}.
\end{enumerate}
We can also address this problem using the classical theory of orthogonal polynomials.
\subsubsection{Orthogonal polynomials on the real line (OPRL)} Given a finite Borel measure $\mu$ supported on $[0, 1]$, we consider the Hilbert space $L^2(\R, \d\mu)$ equipped with the inner product 
\begin{equation*}
    \langle f, g\rangle= \int_{\R} f(x)\overline{g(x)}\,\d\mu(x).
\end{equation*}
As before, we have $1, x, x^2, \ldots \in L^2(\R, \d\mu)$ hence, the Gram--Schmidt orthogonalization process leads us to the set of unique monic orthogonal polynomials $\Psi_0, \Psi_1, \Psi_2 \ldots, \Psi_n(x):=\Psi_n(x;\d\mu), \ldots $. Again, the process terminates at $\Psi_m(x;\d\mu)$ if and only if $\#\supp(\mu)=m$, and in such a case the polynomial $\Psi_m$ vanishes at all points in the support of $\mu$. For a more detailed discussion on OPRL, we refer the reader to the classical literature on the topic, such as \cite{Ismail2005ClassicalAndQuantum, SzegoOrthogonalPolynomials1939}.

Our next result concerns the exact value of $\Omega(\mu, N)$ stated in the problem \ref{prob:polynomials}. 

\begin{theorem}\label{thm:sign uncertainty for polynomials}    
    Let $\mu$ be a Borel probability measure on $[0, 1]$ with infinite support. Let $n\geq 1$ be an integer and let $\mu_1$ be a measure given by $\d\mu_1(x)=(1-x)\d\mu(x)$. Then we have the following 
    \begin{enumerate}[label=\upshape(\roman*)]
        \item $\Omega(\mu, 2n-1)=x_{n, 1}$, where $x_{n,1}$ is the smallest zero of $\Psi_{n}(x; \d\mu)$. Moreover, any extremizer must be of the form  
        \begin{equation*}
            p(x)=C\frac{\bigl(\Psi_n(x;\d\mu)\bigr)^2}{x-x_{n,1}}, \quad C>0.
        \end{equation*}
        \item $\Omega(\mu, 2n)=\xi_{n, 1}$, where $\xi_{n,1}$ is the smallest zero of $\Psi_{n}(x; \d\mu_1)$. Moreover, any extremizer must be of the form  
        \begin{equation*}
            p(x)=C(1-x)\frac{\bigl(\Psi_{n}(x; \d\mu_1)\bigr)^2}{x-\xi_{n,1}}, \quad C>0.
        \end{equation*}
    \end{enumerate}
\end{theorem}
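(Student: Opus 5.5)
We prove Theorem~\ref{thm:sign uncertainty for polynomials} with Gaussian quadrature for $\mu$ (odd case) and for $\mu_1$ (even case). In each case the extremizer has the prescribed form and a positive quadrature node argument gives the lower bound. After normalizing, the orthogonal polynomial $\Psi_n(\cdot;\d\mu)$ has $n$ simple zeros $x_{n,1}<\dots<x_{n,n}$ in $(0,1)$, because $\mu$ has infinite support in $[0,1]$. The Gauss rule
\[
\int_0^1 q\,\d\mu=\sum_{k=1}^n \lambda_k\, q(x_{n,k})
\]
holds for all $\deg q\le 2n-1$, and every weight satisfies $\lambda_k>0$. The same facts hold for $\mu_1$, which is again a positive measure with infinite support in $[0,1]$. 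Its zeros $\xi_{n,k}$ lie in $(0,1)$; they avoid $1$ because the zeros of OPRL lie in the open convex hull of the support.

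\textbf{Case (i), $N=2n-1$.} Let $p\in\mathcal P(\mu,2n-1)$ with $p\not\equiv 0$, and suppose $\r(p)<x_{n,1}$. Then $p\ge 0$ on $[\r(p),1]$, so $p(x_{n,k})\ge 0$ for every $k$. Quadrature gives
\[
0\ge\int p\,\d\mu=\sum_k\lambda_k\, p(x_{n,k})\ge 0,
\]
so all $p(x_{n,k})=0$. Each node other than $x_{n,1}$ is interior to the region $p\ge0$, so it is a zero of even multiplicity. The node $x_{n,1}$ is also interior to $(\r(p),1]$, so it too is a zero of even multiplicity. Hence $p$ has at least $2n>2n-1$ zeros counting multiplicity, so $p\equiv 0$, a contradiction. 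This proves $\Omega\ge x_{n,1}$.

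For equality and uniqueness, suppose $\r(p)=x_{n,1}$. The same computation forces $p(x_{n,k})=0$ for all $k$, with double zeros at $x_{n,2},\dots,x_{n,n}$. Since $\deg p\le 2n-1$, this gives $p=C(x-x_{n,1})\prod_{k\ge2}(x-x_{n,k})^2=C\Psi_n^2/(x-x_{n,1})$ with $C>0$, using eventual non-negativity near $1$. Conversely, this $p$ is non-negative exactly on $[x_{n,1},1]$. By orthogonality its integral is $C\int \Psi_n\cdot\bigl(\Psi_n/(x-x_{n,1})\bigr)\d\mu=0$, since $\Psi_n/(x-x_{n,1})$ has degree $n-1$. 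So $p$ lies in the class and $\r(p)=x_{n,1}$.

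\textbf{Case (ii), $N=2n$.} The degree $2n$ exceeds the exactness of the $n$-point Gauss rule for $\mu$, so we use $\mu_1$ instead. Write $p\in\mathcal P(\mu,2n)$ and use the Gauss--Radau rule for $\mu$ with a fixed node at $1$:
\[
\int q\,\d\mu=\sum_k w_k\, q(\xi_{n,k})+w_0\, q(1),
\]
valid for $\deg q\le 2n$ with all weights positive. The interior nodes are the zeros of $\Psi_n(\cdot;\d\mu_1)$. To derive it, write $q=(1-x)s+q(1)$ with $\deg s\le 2n-1$, apply the Gauss rule for $\mu_1$ to $s$, and handle the constant $q(1)$ separately; positivity of $w_0$ is the step to check.

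If $\r(p)<\xi_{n,1}$, then $p\ge0$ at all the nodes, including $1$. Quadrature then forces $p$ to vanish at every node, with double zeros at each $\xi_{n,k}$ and a zero at $1$. That is $2n+1$ zeros, so $p\equiv0$. At equality $\r(p)=\xi_{n,1}$, the same counting gives $p=C(1-x)\Psi_n(\cdot;\d\mu_1)^2/(x-\xi_{n,1})$, and we check the sign: $C>0$ makes $p\ge0$ on $[\xi_{n,1},1]$. Its $\mu$-integral is $\int \Psi_n\cdot\bigl(\Psi_n/(x-\xi_{n,1})\bigr)\d\mu_1=0$.

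\textbf{Main obstacle.} The delicate point is justifying the Radau rule with $w_0>0$. To see it directly, apply the rule to $q=\Psi_n(\cdot;\d\mu_1)^2$, which has degree $2n$. It vanishes at the interior nodes, so $w_0\,\Psi_n(1)^2=\int\Psi_n^2\,\d\mu>0$. A second concern is whether the constant-term bookkeeping in the decomposition $q=(1-x)s+q(1)$ is consistent. To avoid it, I would bypass Radau and argue directly: from the forced form of an extremal $p$, write $p=(1-x)s+p(1)$ with $s=(p-p(1))/(1-x)$ and apply the Gauss rule for $\mu_1$ to $s$.
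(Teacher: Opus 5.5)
Your proof is correct, and it takes a different route from the paper's.

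\textbf{The paper's route.} It first reduces, via Proposition~\ref{prop:reduction-to-single-sign-change} (built on the factorization Lemma~\ref{lem:polynomial-reduction}, whose proof is omitted) and Proposition~\ref{prop:reduction-to-multiplication-problem}, to the quotient problem $\inf_q \int x q\,\d\mu/\int q\,\d\mu$ over $q\ge 0$ on $[0,1]$. The odd case is then the $n$-point Gauss rule for $\mu$. The even case needs more: Luk\'acs' representation $q=xa^2+(1-x)b^2$, the Gauss rules for both $x\,\d\mu(x)$ and $\mu_1$, and the zero comparison $\xi_{n,1}<\zeta_{n,1}$ of Proposition~\ref{prop:comparison-of-zeros}.

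\textbf{Your route.} You apply a positive quadrature directly to $p$ and count zeros. This is the same mechanism the paper uses for Theorems~\ref{thm:best-radius-of-negativity} and~\ref{thm:equivalence-with-quadrature-problem}. As a result, no reduction to a single sign change is needed. Uniqueness of extremizers also comes out for all of $\mathcal{P}(\mu,N)$ at once, not only within the single-sign-change family produced by the reduction.

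In the even case, the Gauss--Radau rule with fixed node $1$ replaces Luk\'acs, the $x\,\d\mu$ quadrature, and the zero comparison. The extra node costs nothing, because eventual non-negativity already gives $p(1)\ge 0$. The forced zero at $1$ is exactly what produces the factor $(1-x)$ in the extremizer.

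Your positivity check is sound. The decomposition $q=(1-x)s+q(1)$ is consistent because $\mu$ is a probability measure. It gives $w_k=\lambda_{n,k}(\mu_1)/(1-\xi_{n,k})>0$ and $w_0=1-\sum_k w_k=\int \Psi_n(x;\d\mu_1)^2\,\d\mu(x)/\Psi_n(1;\d\mu_1)^2>0$. So the proposed ``bypass'' through $s=(p-p(1))/(1-x)$ is unnecessary. It is literally the same computation, and it would still need $w_0\ge 0$ for the lower bound, which you have already shown.

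\textbf{What the paper's route buys.} It yields the quotient formulation and the interlacing $\xi_{n,k}<x_{n,k}<\zeta_{n,k}$. That interlacing shows, for instance, that $\Omega(\mu,2n)<\Omega(\mu,2n-1)$. It also keeps the argument parallel to the de Branges-space approach of the earlier work.
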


\begin{remark}
With a slight modification in the statement and in the arguments, this result can be extended to measures with finite support.
\end{remark}

It was first observed by Szeg\"{o} in 1921 \cite{Szego1921} that OPUC and OPRL
are not completely independent theories for measures related in a specific way.
Indeed, one can establish a relation between the problems \ref{prob:trig-polynomials} and \ref{prob:polynomials}, see Proposition \ref{prop:equivalence-with-polynomial-problem}.
However, we choose to present both proofs independently, as each argument stands
on its own. More importantly, our result addresses problem \ref{prob:polynomials}
directly in terms of the measures $\mu$ and $\mu_1$, without passing through
the trigonometric polynomial setting of problem \ref{prob:trig-polynomials} via Proposition \ref{prop:equivalence-with-polynomial-problem}.

\subsection{Applications}

\subsubsection{The Lebesgue measure on \texorpdfstring{$\T$}{R/Z}} Let $\d x$ be the usual Lebesgue measure on $\T$ with the normalization $\int_{\T} \d x=1$. In this case, it is easy to see that the orthogonal polynomials on the unit circle are given by $\Phi_n(z; \d x)=z^n$, $n\geq 0$. Moreover, $\varphi_n=\Phi_n$ for all $n\geq 0$.
Hence, the value of $\rho(\d x, N)$ is determined by the zeros of the polynomial $A_{N+1}(z)=(z^{N+1}+1)/2$ and by Theorem \ref{thm:best-radius-of-negativity} we know
\begin{equation*}
    \rho(\d x, N)=\frac{1}{2\pi}\frac{\pi}{N+1}=\frac{1}{2(N+1)}.
\end{equation*}
The extremizers of this problem have the following form
\begin{equation*}
     f(x)=C\frac{|e^{2\pi i (N+1) x}+1|^2}{4(\cos(\pi/(N+1))-\cos(2\pi x))}=\frac{C\cos^2(\pi (N+1)x)}{\cos(\pi/(N+1))-\cos(2\pi x)},
\end{equation*}
where $C>0$ is a positive constant. In Figure \ref{fig:extremizers}, we show plots of some of these functions.
\begin{figure}[ht]
    \centering
    \includegraphics[width=0.7\linewidth]{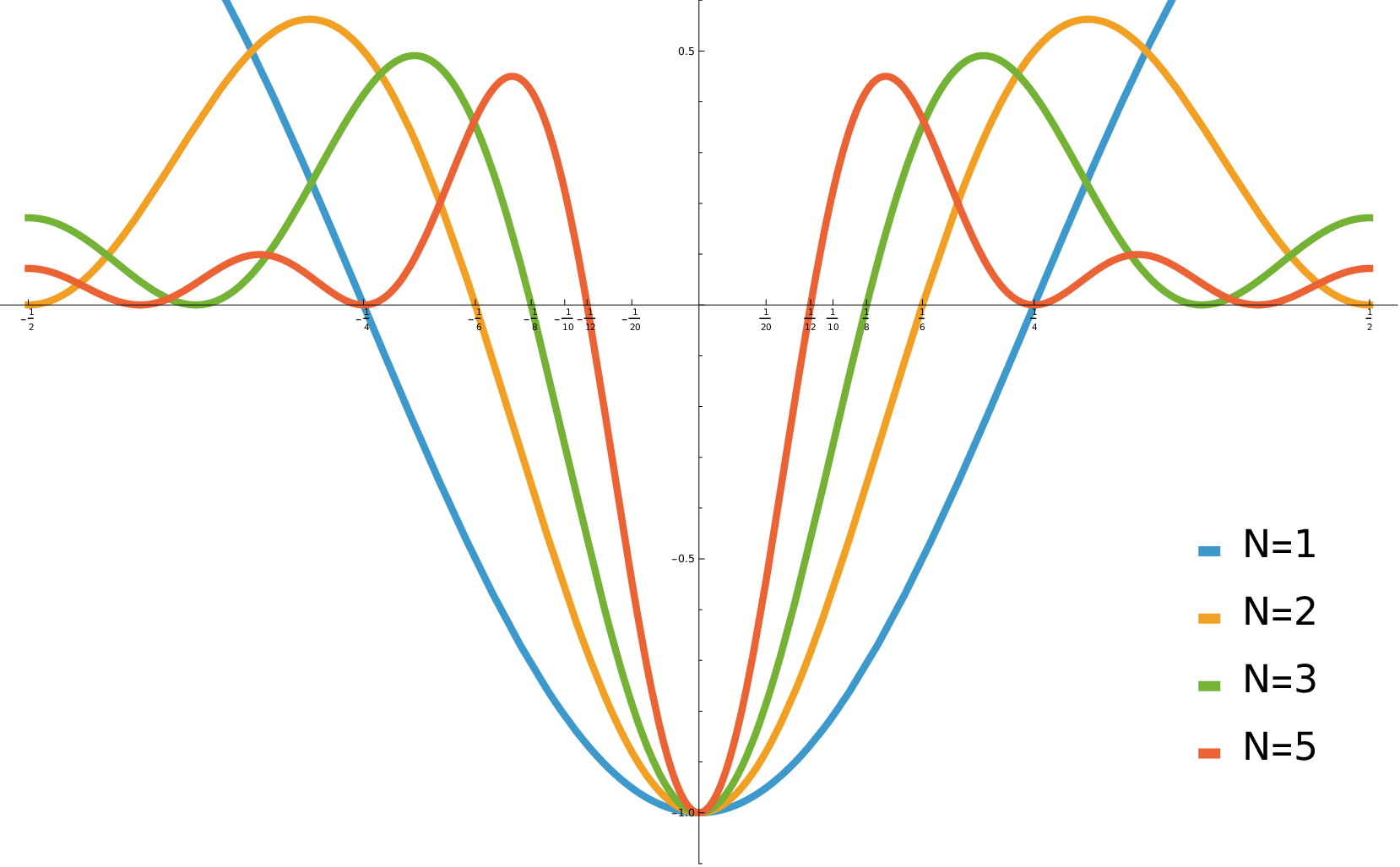}
    \caption{Maximizers for the problem \texorpdfstring{$\rho(\d x, N)$}{} when $\d x$ is the Lebesgue measure.}
   \label{fig:extremizers}
\end{figure}

\subsubsection{Uniform measures on higher-dimensional spheres}
Let $\sigma_d$ be the normalized surface measure on $\Sd$, and by Proposition \ref{prop:decomposition-for-polar-measures} we know its polar part is the measure $\mu_{d-1}$ given by 
\begin{equation}\label{eq:polar-part-of-surface-measure}
    \d\mu_{d-1}(\theta)=C_d |\sin(2\pi \theta)|^{d-1}\,\d\theta.    
\end{equation}
As a consequence of Theorems \ref{thm:best-radius-of-negativity}, \ref{thm:reduction-to-dimension-one},  and \ref{thm:sign uncertainty for polynomials}, we conclude the following. 

\begin{corollary}\label{cor:uniform-measures-on-spheres}
    Let $d \geq 2$, $n \geq 1$, and set $\alpha = (d-2)/2$. Let $\sigma_d$ denote the normalized surface measure on $\Sd$. Then
    \begin{enumerate}[label=\textnormal{(\roman*)}]
        \item $ \rho(d, \sigma_d, 2n-1)=2\pi \rho(\mu_{d-1}, 2n-1)=\arccos{(x_{n, n})}$, where $x_{n,n}$ is the largest zero of Jacobi polynomial $P^{(\alpha, \alpha)}_n(x)$;
        \smallskip
        \item $ \rho(d, \sigma_d, 2n)=2\pi \rho(\mu_{d-1}, 2n)=\arccos{(\xi_{n, n})}$, where $\xi_{n,n}$ is the largest zero of Jacobi polynomial $P^{(\alpha, \alpha+1)}_n(x)$.
    \end{enumerate}
\end{corollary}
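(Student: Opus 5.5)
The corollary follows by chaining the three theorems and then identifying the relevant orthogonal polynomials. First, Theorem \ref{thm:reduction-to-dimension-one} applied to $\mu=\sigma_d$ gives $\rho(d,\sigma_d,N)=2\pi\rho(\mu_{d-1},N)$. By \eqref{eq:polar-part-of-surface-measure} the polar part is $\mu_{d-1}$, which has infinite support, so part (i) of Theorem \ref{thm:best-radius-of-negativity} applies. It remains to compute $\rho(\mu_{d-1},N)$.

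Rather than computing the zeros of $A_{N+1}(z;\d\mu_{d-1})$ directly, I would pass to the interval through Proposition \ref{prop:equivalence-with-polynomial-problem}. An even trigonometric polynomial of degree $N$ is a polynomial $p(\cos 2\pi x)$ of degree $N$, and by Remark \ref{rem: general remarks}(2) it suffices to consider such polynomials. Choose the substitution $t=(1-\cos 2\pi x)/2\in[0,1]$, which maps the pole $x=0$ to $t=0$ and is increasing in $|x|$ on $[0,1/2]$. Then the last sign change at $|x|=\theta$ corresponds to the last sign change at $t=\sin^2(\pi\theta)$. The pushforward of $\mu_{d-1}$ under this map is
\begin{equation*}
\d\nu(t)=c\,(t(1-t))^{(d-2)/2}\,\d t=c\,(t(1-t))^{\alpha}\,\d t ,
\end{equation*}
since $|\sin 2\pi x|^{d-1}\,\d x$ is proportional to $(t(1-t))^{(d-1)/2}\,\d t/\sqrt{t(1-t)}$. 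Therefore
\begin{equation*}
2\pi\rho(\mu_{d-1},N)=2\arcsin\sqrt{\Omega(\nu,N)}=\arccos\bigl(1-2\,\Omega(\nu,N)\bigr).
\end{equation*}
If Proposition \ref{prop:equivalence-with-polynomial-problem} is not stated in exactly this form, I would verify this equivalence directly.

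Next apply Theorem \ref{thm:sign uncertainty for polynomials}. For $N=2n-1$, $\Omega(\nu,2n-1)$ is the smallest zero of $\Psi_n(t;\d\nu)$. Under $s=1-2t$ the weight $(t(1-t))^\alpha$ becomes a multiple of $(1-s)^\alpha(1+s)^\alpha$, so $\Psi_n(\cdot;\d\nu)$ is a multiple of $P_n^{(\alpha,\alpha)}(1-2t)$. Its smallest zero in $t$ corresponds to the largest zero $x_{n,n}$ in $s$, and so $1-2\Omega=x_{n,n}$, which gives (i). For $N=2n$ we use instead $\d\nu_1=(1-t)\d\nu$, with weight $t^\alpha(1-t)^{\alpha+1}$. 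Since $1-t=(1+s)/2$ and $t=(1-s)/2$, this weight is proportional to $(1-s)^\alpha(1+s)^{\alpha+1}$. The orthogonal polynomials are then $P_n^{(\alpha,\alpha+1)}(s)$, and the smallest $t$-zero corresponds to the largest $s$-zero $\xi_{n,n}$, which gives (ii).

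The main obstacle is the bookkeeping of the transfer between the problems. Three points need checking: that the interval problem, with distinguished point $0$ and sign condition near $1$, matches the circle problem under $t=\sin^2(\pi x)$; that the pushforward of $\mu_{d-1}$, which lives on both halves of $\T$, is indeed $\nu$; and that the Jacobi parameter order $(\alpha,\alpha+1)$ is correct, since the factor $1-t$ corresponds to $1+s$. Checking the case $d=2$ against the explicit Legendre zeros would catch sign errors.
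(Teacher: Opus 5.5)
Your proposal is correct and follows essentially the same route as the paper. You reduce to $\mu_{d-1}$ via Theorem \ref{thm:reduction-to-dimension-one}, then transfer to $[0,1]$ through Proposition \ref{prop:equivalence-with-polynomial-problem} with $t=\sin^2(\pi x)$, under which $\mu_{d-1}$ becomes a multiple of $\nu_{\alpha,\alpha}$ and $(1-t)\,\d\nu_{\alpha,\alpha}=\d\nu_{\alpha,\alpha+1}$; finally you apply Theorem \ref{thm:sign uncertainty for polynomials} and identify $\Psi_n(t;\d\nu_{\alpha,\beta})$ with a multiple of $P_n^{(\alpha,\beta)}(1-2t)$, and your bookkeeping of $2\pi\rho=\arccos(1-2\Omega)$ and of the parameter order $(\alpha,\alpha+1)$ is right.
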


\subsubsection{Application to a problem of Yudin}The previous corollary can be applied to study the problem about the measure of the negativity sets of harmonic polynomials posed by Yudin in \cite{YudinTwoExternalProblemsTrigonometric1996}. The problem asks the following:

\begin{quote}
\textit{If $f(\x)$ $(\x \in \mathbb{R}^{d+1})$ is a harmonic polynomial of degree at most $k$ such that $f(0) = 0$, then
\begin{equation*}
    \int_{\mathbb{S}^{d}} f(\x)\, \d\sigma_d(\x) = 0
\end{equation*}
by the mean value theorem. Hence $f$ assumes both positive and negative values on $\mathbb{S}^{d}$. What is the smallest possible value of
\begin{equation*}
\sigma_d(\{\x \in \mathbb{S}^{d} : f(\x) > 0\})?
\end{equation*}}
\end{quote}
Although the original work of Babenko \cite{Babenko1984ExtremalProblemPolynomials} addressed this problem in dimension one ($d=1$), it is a very delicate problem in higher dimensions. Recent progress on this and other related problems can be found in \cite{AndreevYudin2003PositiveValues, Yudin2004Onpositivevaluesofsphericalharmonics} and references therein.

\begin{corollary}\label{cor:yudins-problem}
    Let $0\leq \tau_k(d)$ be the answer to the question of Yudin, then we know 
    \begin{equation*}
        \tau_{2n-1}(d) \leq \sigma_d\bigl(\mathcal{C}(x_{n, n})\bigr)\quad\text{ and }\quad \tau_{2n}(d) \leq \sigma_d\bigl(\mathcal{C}(\xi_{n, n})\bigr),
    \end{equation*}
    where $\mathcal{C}(t)\coloneqq\{\x\in\Sd: \x\cdot\v\geq t\}$, $-1\leq t\leq 1$, is a spherical cap centered at $\v$  and $x_{n,n}, \xi_{n,n}$ are as in Corollary \ref{cor:uniform-measures-on-spheres}. 
\end{corollary}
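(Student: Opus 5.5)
The plan is to use the extremizer from Corollary~\ref{cor:uniform-measures-on-spheres} (built via Theorem~\ref{thm:reduction-to-dimension-one}) as a test function in Yudin's problem, after flipping its sign. I treat the case $k=2n-1$; the case $k=2n$ is identical with $x_{n,n}$ replaced by $\xi_{n,n}$.

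\textbf{The test function.} Let $f$ be the one-dimensional extremizer \eqref{eq:extremizer-function} for the measure $\mu_{d-1}$ with $N=2n-1$. Let $\wt f(\x)=f\bigl(\tfrac1{2\pi}\arccos(\x\cdot\v)\bigr)$ be its lift. By Theorem~\ref{thm:reduction-to-dimension-one} and Corollary~\ref{cor:uniform-measures-on-spheres}, $\wt f$ has the following properties:
\begin{enumerate}
    \item $\wt f=\sum_{j=0}^{2n-1}Y_j$ with $Y_j\in\mathscr H_j$;
    \item $\int_{\Sd}\wt f\,\d\sigma_d\le 0$;
    \item $\wt f\ge 0$ outside the open cap $\{\x\cdot\v> x_{n,n}\}$, since $\r(\wt f)=\arccos(x_{n,n})$.
\end{enumerate}

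\textbf{Key step: the $\sigma_d$-integral is exactly zero.} In the one-dimensional picture, $|A_{N+1}(e^{2\pi i x})|^2$ has a double zero at $x=\pm\theta_1$, while the denominator has a simple zero there. Hence $f$ has a simple zero at $\pm\theta_1$ with a genuine sign change: $f<0$ just inside and $f>0$ just outside. Since $\wt f$ depends only on $\x\cdot\v$, the same holds for $\wt f$ across the boundary of the cap.

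Suppose $\int\wt f\,\d\sigma_d<0$. Then $\wt f+\epsilon$ still has non-positive integral for small $\epsilon>0$. Its nonnegativity set strictly contains the previous one, because the simple zero moves inward and $\wt f$ is bounded away from $0$ on the compact region where it is positive away from the boundary circle. So $\r(\wt f+\epsilon)<\r(\wt f)$, which contradicts extremality. Therefore $\int\wt f\,\d\sigma_d=0$.

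An alternative route, if this perturbation argument needs care near $\x=\pm\v$ or at other double zeros of $f$, is the quadrature identity from the proof of Theorem~\ref{thm:best-radius-of-negativity}, which gives $\int f\,\d\mu=0$ for the extremizer directly. I expect this to be the only delicate point.

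\textbf{Transfer to Yudin's setting.} Put $g=-\wt f$. Each $Y_j$ extends to a homogeneous harmonic polynomial of degree $j$ on $\R^{d+1}$. Summing these gives a harmonic polynomial $G$ of degree at most $2n-1$ whose restriction to the sphere is $g$. The vanishing mean forces $Y_0=0$, hence $G(0)=0$, so $G$ is admissible in Yudin's problem.

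Its positivity set is
\begin{equation*}
\{\x\in\Sd:\ g(\x)>0\}=\{\x\in\Sd:\ \wt f(\x)<0\}\subseteq\{\x\in\Sd:\ \x\cdot\v> x_{n,n}\}\subseteq\mathcal C(x_{n,n}).
\end{equation*}
Therefore $\tau_{2n-1}(d)\le\sigma_d(\mathcal C(x_{n,n}))$. The even-degree bound $\tau_{2n}(d)\le\sigma_d(\mathcal C(\xi_{n,n}))$ follows by the same argument using part (ii) of Corollary~\ref{cor:uniform-measures-on-spheres}.
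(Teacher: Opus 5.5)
Your argument is correct and is the argument the paper intends when it calls this corollary an immediate consequence of Corollary \ref{cor:uniform-measures-on-spheres}: take the lifted extremizer, observe that its $\sigma_d$-mean vanishes (so $Y_0=0$ and the harmonic extension vanishes at the origin), and flip the sign; the paper obtains the zero mean from the quadrature identity in the proof of Theorem \ref{thm:best-radius-of-negativity}.

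Your perturbation justification is loosely worded, since the extremizer is not bounded away from $0$ off the cap because of its double zeros at the other nodes. The conclusion still holds by compactness: $\{\wt f>-\epsilon\}$ is an open set containing the compact set $\{\x\cdot\v\le x_{n,n}\}$, so $\r(\wt f+\epsilon)<\r(\wt f)$. A simpler alternative is to replace $\wt f$ by $\wt f-\int\wt f\,\d\sigma_d\ge\wt f$, which has zero mean and no larger negativity set.
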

Observe that Corollary \ref{cor:yudins-problem} is a direct consequence of Corollary \ref{cor:uniform-measures-on-spheres}.

\subsubsection{Dual problem for quadrature nodes}\label{subsec:dual problem for quadratures}
As before, let $\mu$ be an even probability measure on $\T$.  Recall that a finite set $\xi = \{\xi_1, \xi_2, \ldots, \xi_{N}\} \subsetneq \T$ is a \emph{set of nodes} of a quadrature if there exist weights $\lambda_j \in (0,1]$, $1\leq j\leq N$,  such that the identity
\begin{equation}\label{eq:n-point-quadrature}
    \int_{\T} f(x)\,\d\mu(x) = \sum_{j=1}^{N} \lambda_j\, f(\xi_j)
\end{equation}
holds for any trigonometric polynomial $f(x)=\sum_{|n|\leq N-1} \widehat{f}(n) e^{2\pi i n x}$. The identity \eqref{eq:n-point-quadrature} is called an $N$-\emph{point quadrature formula} for $\mu$. A more detailed overview of quadrature formulas can be found in \cite{JonesNjastadThronMomentTheoryOnUnitCircle} and references therein.  The following extremal problem for the set of nodes becomes the dual problem for \ref{prob:trig-polynomials}.

\begin{enumerate}[label=\textbf{(P\arabic*)}, ref=(P\arabic*), start=4]
    \item \label{prob:quadrature-formulas}
    Given $\mu$ and $N \geq 1$, find the exact value of
    \begin{equation*}
        \Lambda(\mu, N) \coloneqq \sup_{\#\xi=N}
        \min\bigl\{|\xi_j| : \xi_j \in \xi \bigr\},
    \end{equation*}
    where the supremum is taken over all sets of nodes $\xi$ with $N$ points. If the set is empty, we assume the supremum is 0.
\end{enumerate}

\begin{theorem}\label{thm:equivalence-with-quadrature-problem}
    Given $\mu$ and $N \geq 1$, the problems \textnormal{\ref{prob:trig-polynomials}} and \textnormal{\ref{prob:quadrature-formulas}} are related via the identity 
    \begin{equation*}
        \rho(\mu, N)=\Lambda(\mu, N+1).
    \end{equation*}
\end{theorem}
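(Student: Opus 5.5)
We will prove the two inequalities $\rho(\mu,N)\le \Lambda(\mu,N+1)$ and $\rho(\mu,N)\ge \Lambda(\mu,N+1)$ separately. Theorem~\ref{thm:best-radius-of-negativity} is taken as given, and we normalize $\mu$ to be a probability measure (Remark, item (1)).

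\emph{Step 1: $\Lambda(\mu,N+1)\le\rho(\mu,N)$.} If no $(N+1)$-point node set exists, then $\Lambda=0$ and there is nothing to prove. Otherwise fix a node set $\xi=\{\xi_1,\dots,\xi_{N+1}\}$ with positive weights $\lambda_j$, exact for degree $\le N$. Take any $0\not\equiv f\in\mathcal{T}(\mu,N)$ and suppose, for contradiction, that $\r(f)<\min_j|\xi_j|$. Then $f(\xi_j)\ge 0$ for every $j$, and
\[
0\ge\int f\,\d\mu=\sum_j\lambda_j f(\xi_j)\ge 0 .
\]
This forces $f(\xi_j)=0$ for all $j$. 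Since $f\ge0$ near each $\xi_j$, each $\xi_j$ is a zero of even multiplicity, at least $2$. That already gives $2(N+1)$ zeros counted with multiplicity. A nonzero real trigonometric polynomial of degree $N$ has at most $2N$ zeros on $\T$, so $f\equiv 0$, a contradiction. Taking the supremum over $\xi$ and the infimum over $f$ gives the inequality.

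\emph{Step 2: $\Lambda(\mu,N+1)\ge\rho(\mu,N)$.}

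\emph{Case $N\ge\#\supp(\mu)$.} Here $\rho=0$ by Theorem~\ref{thm:best-radius-of-negativity}(ii), and the inequality is trivial.

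\emph{Case $N<\#\supp(\mu)$.} We construct a node set from the zeros of $A_{N+1}(z;\d\mu)$.
\begin{itemize}
\item The polynomial $A_{N+1}$ is self-reciprocal up to a unimodular constant. Its zeros are simple and lie on $\dD$; this is the standard para-orthogonal polynomial fact, obtainable from \eqref{eq:hermite-biehler in the intro}.
\item Write these zeros as $e^{2\pi i\eta_j}$, $j=1,\dots,N+1$. The classical Szeg\H{o}-type quadrature for para-orthogonal polynomials (see \cite{JonesNjastadThronMomentTheoryOnUnitCircle}) states that $\int g\,\d\mu=\sum_j\lambda_j g(\eta_j)$ for all $g$ in $\operatorname{span}\{z^k: |k|\le N\}$, with weights $\lambda_j=1/K_N(\eta_j,\eta_j)>0$ given by the Christoffel function.
\item Real trigonometric polynomials of degree $\le N$ are exactly such $g$, so $\{\eta_j\}$ is an admissible $(N+1)$-point node set.
\item Its minimal $|\eta_j|$ equals $\theta_1=\rho(\mu,N)$, so $\Lambda(\mu,N+1)\ge\rho(\mu,N)$.
\end{itemize}
The weights lie in $(0,1]$ because they are positive and sum to $\mu(\T)=1$.

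The main obstacle is verifying the quadrature property in the degenerate case $\#\supp(\mu)=N+1$. There $A_{N+1}$ is defined through $z\varphi_N+\varphi_N^*$, and one should check that its zeros give a valid quadrature (they may coincide with the support, giving the trivial quadrature). If the reference quadrature is not available in that case, a fallback is to derive it from the reproducing kernel identity
\[
K_N(z,w)=\frac{\overline{\Psi(w)}\Psi(z)-\cdots}{1-\overline{w}z},
\]
with $\Psi$ of the form $A_{N+1}$ (the paper's \S\ref{subsec: rkhs of polynomials}). This would show that the functions $K_N(\cdot,\eta_j)$ are mutually orthogonal in $L^2(\mu)$ and span the degree-$N$ space. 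Expanding $g=h\bar h$ in that basis then gives the quadrature with weights $1/K_N(\eta_j,\eta_j)$, and the general case follows by a Fejér–Riesz factorization into sums of such products.
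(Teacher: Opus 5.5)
Your proof is correct and follows essentially the same route as the paper. The lower bound $\rho(\mu,N)\ge\Lambda(\mu,N+1)$ comes from the positive weights plus the bound of $2N$ zeros, and the upper bound from taking the zeros of $A_{N+1}$ as nodes. Your worry about the degenerate case $\#\supp(\mu)=N+1$ is already settled by Lemma~\ref{lem:quadrature-formula-for-laurent-polynomials}, which handles both choices of $P$ uniformly by the reproducing-kernel argument you sketch. The only differences are harmless: for $N\ge\#\supp(\mu)$ you combine your Step 1 with $\rho=0$, where the paper shows directly via $|\Phi_m|^2$ that no $(N+1)$-node quadrature exists, and you additionally check that the weights lie in $(0,1]$.
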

\section*{Notation}
For any set $A$, we denote its cardinality by $\# A$. For each $n\geq 1$, we denote by $\mathcal{P}_n$ the set of all polynomials of degree at most $n$. If $X$, $Y$ are two metric spaces, and $\mu$ is a measure on $X$, and $T : X \to Y$ is a Borel map, we denote by $T_{\#}\mu$ the \emph{push-forward of $\mu$ through $T$}, defined by
\begin{equation*}
    T_{\#}\mu(B) := \mu(T^{-1}(B)),
\end{equation*}
for all Borel sets $B\subseteq Y$. If a measure $\mu$ is defined on $\T$, then we set $\muarc$ as the ``upper-half'' of $\mu$ which is a measure on the interval $[0, 1/2]$ and is given by
\begin{equation*}
    \muarc(A)\coloneqq \mu(A\cap (0, 1/2))+ \frac{1}{2}\mu(A\cap \{0, 1/2\}),
\end{equation*}
for every $A\subseteq[0, 1/2]$.

\section{Dimension shift: Proof of Theorem \ref{thm:reduction-to-dimension-one}}\label{sec:dimension-shift}
The following result establishes the complete characterization of polar measures on $\Sd$.
We were unable to find an exact reference for it, so for the convenience of the reader, we provide a brief proof.
\begin{proposition}\label{prop:decomposition-for-polar-measures}
    If $\mu$ is a polar measure on $\Sd$, then it can be decomposed as $\mu =\muarc_p \otimes \sigma_{d-1}$, in the following sense
    \begin{equation*}\label{eq:disintegration identity as defintion}
        \int_{\Sd} f(\bm x)\, \d\mu(\bm x)= \int_{0}^{1/2}\left(\int_{\mathbb{S}^{d-1}} f\bigl(\x'\sin(2\pi \theta), \cos(2\pi \theta)\bigr)\,\d\sigma_{d-1}(\bm x')\right)\, \d\muarc_p(\theta),
    \end{equation*}
    where $\sigma_{d-1}$ is the normalized uniform surface measure on $\mathbb{S}^{d-1}$, i.e., $\sigma_{d-1}(\SS^{d-1})=1$, and $\mu_p$ is the polar part of $\mu$.

\end{proposition}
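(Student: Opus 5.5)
We plan to use the polar-coordinate map
\begin{equation*}
\Pi:\Sd\to[0,1/2],\qquad \Pi(\x)=\tfrac{1}{2\pi}\arccos(\x\cdot\v),
\end{equation*}
and to disintegrate $\mu$ along its fibres. For $\theta\in(0,1/2)$ the fibre $\Pi^{-1}(\theta)$ is the sphere $\{(\x'\sin(2\pi\theta),\cos(2\pi\theta)):\x'\in\mathbb{S}^{d-1}\}$. For $\theta\in\{0,1/2\}$ the fibres are the single points $\v$ and $-\v$.

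\textbf{Step 1: identify the push-forward.} We show that $\Pi_{\#}\mu=\muarc_p$. For $A\subseteq(0,1/2)$ we have $\Pi^{-1}(A)=A^{+,d}$. The set $A^{-,d}$ is empty, because $-\theta\notin(0,1/2)$ when $\theta\in A$. Hence $\mu_p(A)=\mu(A^{+,d})$, which agrees with $\muarc_p(A)=\mu_p(A)$.

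For $A=\{0\}$ the definition gives $\mu_p(\{0\})=2\mu(\{\v\})$, since both $A^{\pm,d}$ are empty. Then $\muarc_p(\{0\})=\tfrac12\mu_p(\{0\})=\mu(\{\v\})$. The same computation at $\theta=1/2$ gives $\muarc_p(\{1/2\})=\mu(\{-\v\})$. We will treat the points $\theta=0,1/2$ separately in this way throughout.

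\textbf{Step 2: disintegrate.} Both spaces are compact metric, so the disintegration theorem gives a $\muarc_p$-a.e.\ unique family of probability measures $\nu_\theta$ on $\Pi^{-1}(\theta)$ with
\begin{equation*}
\int_{\Sd} f\,\d\mu=\int_0^{1/2}\left(\int f\,\d\nu_\theta\right)\d\muarc_p(\theta).
\end{equation*}
Each $R\in\mathrm{SO}_{\v}(d+1)$ preserves $\mu$ and commutes with $\Pi$. Therefore $\{R_{\#}\nu_\theta\}$ is also a disintegration of $\mu$, and uniqueness gives $R_{\#}\nu_\theta=\nu_\theta$ for a.e.\ $\theta$.

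\textbf{Step 3: remove the null sets.} The exceptional null set in Step 2 depends on $R$. To handle this, we fix a countable dense subgroup $G\subset\mathrm{SO}_{\v}(d+1)\cong\mathrm{SO}(d)$ and intersect the full-measure sets over $R\in G$. For a.e.\ $\theta$, the measure $\nu_\theta$ is then $G$-invariant. By weak-$*$ continuity of $R\mapsto R_{\#}\nu_\theta$, it is invariant under all of $\mathrm{SO}(d)$.

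On the fibre, identified with $\mathbb{S}^{d-1}$, the group $\mathrm{SO}(d)$ acts transitively for $d\ge2$. So $\nu_\theta$ is the unique rotation-invariant probability measure there, namely $\sigma_{d-1}$.

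At $\theta=0$ and $\theta=1/2$ the fibre is a single point. The inner integral there equals $f(\v)$ or $f(-\v)$, which is consistent with the formula because $\x'\sin(2\pi\theta)=0$ at those points.

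\textbf{Alternative route for Steps 2--3.} One can avoid abstract disintegration by averaging. Replace $f$ by its $\mathrm{SO}_{\v}$-average
\begin{equation*}
\bar f(\x)=\int_{\mathrm{SO}_{\v}} f(R\x)\,\d R,
\end{equation*}
taken with respect to Haar measure. By Fubini and invariance of $\mu$, we have $\int f\,\d\mu=\int\bar f\,\d\mu$. Moreover $\bar f$ depends only on $\Pi(\x)$, and $\bar f=g\circ\Pi$ with $g(\theta)$ equal to the $\sigma_{d-1}$-average of $f$ over the fibre. This holds because the Haar average over the stabilizer of a point of $\mathbb{S}^{d-1}$ produces the uniform measure.

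Then $\int\bar f\,\d\mu=\int g\,\d(\Pi_{\#}\mu)$, and Step 1 finishes the proof. I would present this averaging route, since it needs only continuous $f$, extends to all bounded Borel $f$ by monotone class, and sidesteps the null-set issue.

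The main obstacle is the bookkeeping at the poles. The factor $2$ in the definition of $\mu_p$ and the factor $\tfrac12$ in $\muarc$ must cancel exactly, and this is verified in Step 1.
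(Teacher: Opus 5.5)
Your proposal is correct. Steps 1--3 are essentially the paper's own argument. The paper disintegrates $\mu$ along $\F(\x)=\frac{1}{2\pi}\arccos(\x\cdot\v)$ by Rokhlin's theorem, argues that the fibre measures $\nu_\theta$ are $\mathrm{SO}_{\v}(d+1)$-invariant for a.e.\ $\theta$, and identifies them with $\sigma_{d-1}$ through Mattila's uniqueness theorem for uniformly distributed measures. Only at the end does it check $\F_{\#}\mu=\muarc_p$, using the same pole bookkeeping as your Step 1. Your Step 3 also repairs a point the paper passes over in one sentence: the exceptional null set depends on $R$, and your countable dense subgroup together with weak-$*$ continuity is the right fix.

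The averaging route you say you would present is genuinely different. It replaces disintegration by two ingredients: the Haar symmetrization $f\mapsto\bar f$, which is the same operator $\wt f$ the paper introduces later in that section for Lemma~\ref{lem:symmetrization-of-enn-functions}, and the fact that the orbit map $R'\mapsto R'\x'$ pushes Haar measure on $\mathrm{SO}(d)$ to $\sigma_{d-1}$. Combined with Step 1, this gives a short, self-contained proof with no conditional measures and no null-set issues. The cost is a final Riesz or monotone-class step to pass from continuous $f$ to all $f\in L^1(\mu)$. The paper's route delivers the identity for every integrable $f$ at once and exhibits the fibre measures explicitly, though your identity recovers them too, by uniqueness of disintegrations.
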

\begin{proof}
   We parametrize the sphere $\Sd$ using the polar angle and parallel latitudes: 
\begin{equation*}
    \Sd=\left\{\x=(\x'\sin(2\pi \theta), \cos(2\pi \theta)) \,:\, \x'\in \mathbb{S}^{d-1}, 0\leq \theta\leq 1/2\right\}. 
\end{equation*}
Consider the map $\operatorname{F}:\Sd\to [0, 1/2]$
\begin{equation*}
    \F(\x)=\frac{1}{2\pi} \arccos(\x\cdot\v)
\end{equation*}
We know that the push-forward measure $\lambdarc=\operatorname{F}_{\#} \mu$ is a finite Borel measure on $[0, 1/2]$. Applying Rokhlin's Disintegration Theorem \cite[Theorem 5.3.1]{AmbrosioGigliSavare2008-GradientFlows}
we deduce that for each $\theta \in [0, 1/2]$ there exists a finite measure $\nu_\theta$ on $\mathbb{S}^{d-1}$ such that $\theta \mapsto \nu_\theta$ is a $\lambdarc$-measurable map, and $\nu_\theta(\mathbb{S}^{d-1})=1$ for $\lambdarc$-a.e. $\theta\in [0, 1/2]$. Moreover, we have
\begin{equation}\label{eq:disintegration identity}
    \int_{\Sd} f(\bm x)\, \d\mu(\bm x)= \int_{0}^{1/2} \left(\int_{\mathbb{S}^{d-1}} f(\x'\sin(2\pi \theta), \cos(2\pi \theta))\,\d\nu_\theta(\bm x')\right)\,\d\lambdarc(\theta),
\end{equation}
for all $f \in L^1(\Sd, \d\mu)$. Since $\mu$ is a polar measure, we know the left-hand side of \eqref{eq:disintegration identity} is preserved under the action of $R\in \mathrm{SO}_{\bm v}(d+1)$. On the other hand, this action also preserves the polar angle $\theta\in [0, 1/2]$, thus it must preserve the inner integral on the right-hand side of \eqref{eq:disintegration identity} for $\lambdarc$-a.e. $\theta\in[0, 1/2]$.
Noting the fact that $\nu_{\theta}(\mathbb{S}^{d-1})=1$ and applying the uniqueness of the uniformly distributed measures on metric spaces \cite[Theorem 3.4]{MattilaGeometryOfSets1995}, we deduce
that $\nu_\theta=\sigma_{d-1}$ for $\lambdarc$-a.e. $\theta\in [0, 1/2]$. Now let $A\subseteq [0, 1/2]$ be a Borel set and consider the indicator function of the set $A^d$ defined in \eqref{eq:liftofasetA}. With this choice in \eqref{eq:disintegration identity}, we see
\begin{equation*}
    \mu(A^d)=\int_{0}^{1/2} 1_{A}(\theta)\, \d\lambdarc(\theta)=\lambdarc(A).
\end{equation*}
On the other hand,
\begin{equation*}
\begin{split}
\muarc_p(A)=\mu_p(A\cap(0, 1/2))+\frac{1}{2}\mu_p(A\cap\{0, 1/2\})=\mu(A^{+, d})+\mu(A^d\cap\{\v, -\v\})=\mu(A^d)=\lambdarc(A)
\end{split}
\end{equation*}
Hence, we conclude $\lambdarc=\muarc_p$ on $[0, 1/2]$. Moreover, we have the identity
\begin{equation*}
\begin{split}
    \int_{\Sd} f(\x)\,\d\mu(\x)&=\int_{0}^{1/2} \left(\int_{\mathbb{S}^{d-1}} f(\x'\sin(2\pi \theta), \cos(2\pi \theta))\,\d\sigma_{d-1}(\bm x')\right)\,\d\muarc_p(\theta)\\
    &=\frac{1}{2}\int_{\T} \left(\int_{\mathbb{S}^{d-1}} f(\x'\sin(2\pi \theta), \cos(2\pi \theta))\,\d\sigma_{d-1}(\bm x')\right)\,\d\mu_p(\theta).\qedhere
\end{split}
\end{equation*}
\end{proof}

\subsection{Setup} Let $\mu$ be a polar measure on $\Sd$ and $N\geq1$ be an integer.
We say a function $f:\Sd\to \R$ is a \emph{polar function (with the pole $\v$)} if 
\begin{equation*}
    f(R\x)=f(\x)
\end{equation*}
holds for all $\x\in \Sd$ and for all $R\in \mathrm{SO}_{\v}(d+1)$. 
We first show that an appropriate symmetrization process, considered in \cite{LiTrigonometricExtremalFunctionsErdosTuran1999}, does not increase the radius of the last sign change. Thus, for the problem in higher dimensions, it suffices to consider only the polar functions. 

\medskip

Following the approach of \cite{LiTrigonometricExtremalFunctionsErdosTuran1999} for any function $f:\Sd\to \C$, we define its symmetrization $\widetilde{f}:\Sd\to \C$ as follows
\begin{equation*}
    \widetilde{f}(\bm{x})\coloneqq \int_{\mathrm{SO}_{\bm{v}}(d+1)} f(R\bm{x})\, \d\eta(R),
\end{equation*}
where $\eta$ is the normalized Haar measure on $\mathrm{SO}_{\bm{v}}(d+1)$ so that $\eta(\mathrm{SO}_{\bm{v}}(d+1))=1$. 
First, we claim that this symmetrization process does not increase the radius of the last sign change. 
To justify this, we recall the following facts about spherical harmonics from \cite[Chapter IV]{SteinWeissFourierAnalysis}.
\begin{lemma}[cf. {\cite{SteinWeissFourierAnalysis}}]\label{lem:facts-about-zonal-spherical-harmonics}
    Let $k\geq 0$ and $Y \in \mathscr{H}_k$ be a spherical harmonic on $\Sd$ with $d\geq 2$. The following holds
    \begin{enumerate}[label=\textnormal{(\roman*)}]
        \item The symmetrization of $Y$ is given by $\wt Y(\x)=\frac{\omega_d}{\dim{\mathscr{H}_k}}Y(\v) \,Z_{\v}^{(k)}(\x)$, where $Z^{(k)}_{\v}\in \mathscr{H}_k$ is the zonal spherical harmonic of degree $k$ with pole $\v$.
        \item There exists a polynomial $P_k$ of degree $k$ and a constant $c_{k, d}$ such that
        \begin{equation*}
            Z^{(k)}_{\v}(\x)=c_{k, d}\, P_k(\x\cdot \v), \quad \text{ for all } \x\in\Sd.
        \end{equation*}
    \end{enumerate}
\end{lemma}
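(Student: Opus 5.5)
The plan is to derive both parts from the reproducing-kernel structure of $\mathscr{H}_k$ and the invariance of $\mathscr{H}_k$ under rotations, as developed in \cite[Chapter IV]{SteinWeissFourierAnalysis}. We use the Stein--Weiss normalization: $\sigma$ is the unnormalized surface measure on $\Sd$ with total mass $\omega_d$. Then $Z^{(k)}_{\x}$ is the unique element of $\mathscr{H}_k$ with
\begin{equation*}
Y(\x)=\int_{\Sd} Y(\bm y)\,Z^{(k)}_{\x}(\bm y)\,\d\sigma(\bm y) \quad \text{for all } Y\in\mathscr{H}_k .
\end{equation*}
It satisfies $Z^{(k)}_{\x}(\x)=\dim\mathscr{H}_k/\omega_d$ and $Z^{(k)}_{R\x}(R\bm y)=Z^{(k)}_{\x}(\bm y)$ for $R\in\mathrm{SO}(d+1)$.

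For part (i), we first note that $\mathscr{H}_k$ is invariant under $Y\mapsto Y\circ R$ and is finite-dimensional, hence closed. Since $R\mapsto Y\circ R$ is continuous into $\mathscr{H}_k$, the Haar average $\wt Y$ lies in $\mathscr{H}_k$. By invariance of Haar measure, $\wt Y$ is also invariant under $\mathrm{SO}_{\v}(d+1)$.

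The key step is to show that the $\mathrm{SO}_{\v}(d+1)$-invariant subspace of $\mathscr{H}_k$ is spanned by $Z^{(k)}_{\v}$. We argue as in Stein--Weiss. Let $Y\in\mathscr{H}_k$ be invariant under the stabilizer. Then $Y$ is constant on each parallel $\{\x\cdot\v=t\}$, because $\mathrm{SO}_{\v}(d+1)$ acts transitively on these sets when $d\ge 2$. The same holds for $Z^{(k)}_{\v}$, by the covariance relation above. The space of zonal elements of $\mathscr{H}_k$ is identified with restrictions to $\Sd$ of harmonic homogeneous polynomials of degree $k$ in $\bm y=(\bm y',y_{d+1})$ that depend only on $|\bm y'|^2$ and $y_{d+1}$. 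Writing such a polynomial as $\sum_j a_j\, y_{d+1}^{k-2j}|\bm y'|^{2j}$, the equation $\Delta p=0$ gives a two-term recursion expressing $a_{j+1}$ through $a_j$. So the space is one-dimensional, and $\wt Y=c\,Z^{(k)}_{\v}$ for some constant $c$.

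To compute $c$, evaluate at $\v$. Since $R\v=\v$ for every $R$ in the stabilizer, $\wt Y(\v)=Y(\v)$. Since $Z^{(k)}_{\v}(\v)=\dim\mathscr{H}_k/\omega_d$, we get $c=\omega_d Y(\v)/\dim\mathscr{H}_k$, which is the formula in (i).

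For part (ii), we reuse the polynomial description just obtained. On $\Sd$ we have $|\bm y'|^2=1-y_{d+1}^2$ and $y_{d+1}=\x\cdot\v$. Substituting turns $\sum_j a_j\, y_{d+1}^{k-2j}|\bm y'|^{2j}$ into a polynomial $P_k$ in $\x\cdot\v$ of degree at most $k$. The degree is exactly $k$: otherwise $Z^{(k)}_{\v}$ would be a polynomial of degree $<k$ on $\Sd$, hence orthogonal to $\mathscr{H}_k$. It would then be orthogonal to itself, which is impossible since $Z^{(k)}_{\v}(\v)>0$. Stein--Weiss identify $P_k$ with the Gegenbauer polynomial $P^{((d-1)/2)}_k$, with an explicit constant $c_{k,d}$, but only the existence statement is needed here.

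The main obstacle is the one-dimensionality of the invariant subspace. It relies on transitivity of $\mathrm{SO}_{\v}(d+1)$ on parallels, which is exactly where $d\geq2$ enters, and on the Laplacian recursion. Since this is textbook material, in the write-up I would cite the corresponding statements of \cite[Chapter IV, \S2]{SteinWeissFourierAnalysis} rather than redo the recursion.
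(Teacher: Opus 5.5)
Your proposal is correct and takes essentially the same approach as the paper, whose proof just cites \cite[Ch.~IV, Corollary 2.9, Theorems 2.12 and 2.14]{SteinWeissFourierAnalysis}. You reconstruct those Stein--Weiss arguments: invariance under the stabilizer, one-dimensionality of the zonal subspace via the Laplacian recursion, and evaluation at $\v$ to fix the constant. Your orthogonality argument for $\deg P_k = k$ is a valid self-contained substitute for reading the degree off the Gegenbauer polynomial.
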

\begin{proof}
    For part (i) see \cite[Ch IV, Theorem 2.12]{SteinWeissFourierAnalysis} and \cite[Ch IV, Corollary 2.9]{SteinWeissFourierAnalysis}. \\
    For part (ii) see \cite[Ch IV, Theorem 2.14]{SteinWeissFourierAnalysis}.
\end{proof}

\begin{lemma}\label{lem:symmetrization-of-enn-functions}
    If $0\not\equiv f\in \mathcal{T}(d, \mu, N)$ is a non-trivial function, then $\wt f\in \mathcal{T}(d, \mu, N)$. Moreover, $$\r(\wt f)\leq \r(f).$$
\end{lemma}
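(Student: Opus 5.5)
We verify the three defining conditions of $\mathcal{T}(d,\mu,N)$ for $\wt f$ in turn, and the radius inequality comes out of the sign condition. Throughout, write $r=\r(f)$, so that $f(\x)\geq 0$ whenever $\mathsf{d}(\x,\v)\geq r$ in the geodesic metric of $\Sd$.

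\emph{Degree condition.} Write $f=\sum_{k=0}^N Y_k$ with $Y_k\in\mathscr{H}_k$. Symmetrization is linear, so $\wt f=\sum_{k=0}^N \wt{Y_k}$. By Lemma \ref{lem:facts-about-zonal-spherical-harmonics}(i), each $\wt{Y_k}$ is a multiple of the zonal harmonic $Z^{(k)}_{\v}\in\mathscr{H}_k$. Hence $\wt f$ again has the required form. It is real-valued because $f$ is real and $\eta$ is a positive measure.

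\emph{Integral condition.} We use Fubini together with the polar invariance \eqref{eq:rotation invariance}. The invariance gives $\int_{\Sd} f(R\x)\,\d\mu(\x)=\int_{\Sd} f(\x)\,\d\mu(\x)$ for every $R\in\mathrm{SO}_{\v}(d+1)$, since $\mu$ is invariant under pushforward by $R$. Therefore
\begin{equation*}
\int_{\Sd}\wt f\,\d\mu=\int_{\mathrm{SO}_{\v}(d+1)}\int_{\Sd} f(R\x)\,\d\mu(\x)\,\d\eta(R)=\int_{\Sd} f\,\d\mu\leq 0,
\end{equation*}
using $\eta(\mathrm{SO}_{\v}(d+1))=1$. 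Fubini applies because $f$ is continuous and both measures are finite.

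\emph{Sign condition and the radius bound.} Each $R\in\mathrm{SO}_{\v}(d+1)$ is an isometry fixing $\v$, so $\mathsf{d}(R\x,\v)=\mathsf{d}(\x,\v)$. If $\mathsf{d}(\x,\v)\geq r$, then $f(R\x)\geq0$ for every $R$, and integrating against the positive measure $\eta$ gives $\wt f(\x)\geq 0$. Thus $\wt f$ is eventually non-negative. Since $\r(\wt f)$ is an infimum over admissible radii and $r$ is admissible, we get $\r(\wt f)\leq r=\r(f)$.

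The main subtlety is the boundary case $\mathsf{d}(\x,\v)=r$. The definition \eqref{eq:definition-of-r-of-f} only gives $f\geq0$ off $B_{r'}(\v)$ for every $r'>r$. Continuity of $f$ extends this to $\mathsf{d}(\x,\v)\geq r$, which justifies the step above.

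A second point, more a matter of convention, is that $\wt f$ could vanish identically even though $f\not\equiv0$. The lemma only asserts membership in $\mathcal{T}(d,\mu,N)$, and the zero function trivially belongs, so this causes no problem here. Where the lemma is used to reduce to polar functions, one would observe separately that an identically zero $\wt f$ is harmless, or can be avoided by adding a suitable constant, as in Remark \ref{rem: general remarks}.
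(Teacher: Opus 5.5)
Your proof is correct and is essentially the paper's: linearity with Lemma~\ref{lem:facts-about-zonal-spherical-harmonics}(i) for the harmonic expansion, Fubini with the $\mathrm{SO}_{\v}(d+1)$-invariance of $\mu$ for the integral condition, and the fact that rotations fixing $\v$ preserve distance to $\v$ for the sign condition and $\r(\wt f)\leq \r(f)$. Your boundary-case discussion can be bypassed by observing that every radius admissible for $f$ is admissible for $\wt f$.

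The one thing to correct is your closing aside, which lies outside the lemma. Adding a constant $c$ cannot deal with $\wt f\equiv 0$, since then $\wt{f+c}\equiv c$. This violates the integral condition if $c>0$ (because $\int f\,\d\mu=\int \wt f\,\d\mu=0$) and has radius $\pi$ if $c<0$. The paper instead shows, in the remark following the lemma, that $\wt f\equiv 0$ together with $\r(f)<\pi$ forces $f$ to vanish on a cap around $-\v$, hence $f\equiv 0$.
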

\begin{proof}
Since $f(\x)=\sum_{k=0}^{N} Y_k(\x)$, using part (i) of Lemma \ref{lem:facts-about-zonal-spherical-harmonics}, we know $\widetilde{f}$ is a linear combination of zonal spherical harmonics of degree at most $N$. From the definition of $\widetilde{f}$ it is clear that $\widetilde{f}(\x)\geq 0$, for all $\x \not\in B_{\r(f)}(\v)$, i.e., $\r(\wt f)\leq \r(f)$. Moreover, we have 
\begin{equation*}
\begin{split}
    \int_{\Sd} \widetilde{f}(\x) \,  \d\mu(\x)&
    = \int_{\mathrm{SO}_{\bm{v}}(d+1)} \int_{\Sd}  f(R\bm{x}) \,\d\mu(\x) \, \d\eta(R)\\
    &= \int_{\Sd}  f(\bm{x}) \int_{\mathrm{SO}_{\bm{v}}(d+1)}   \,\d\eta(R) \,\d\mu(\x)=\int_{\Sd}  f(\bm{x}) \,\d\mu(\x)\leq 0.
\end{split}
\end{equation*}
Therefore, we can conclude $\wt f\in \mathcal{T}(d, \mu, N)$.    
\end{proof}

\begin{remark}\label{rem:symmetrization not identically zero}
Note that, for a function $f\in \mathcal{T}(d, \mu, N)$ with $\r(f)<\pi$ and $\wt f\equiv 0$, we must have $f\equiv 0$.  Indeed, $\wt f\equiv 0$ together with the fact that $\r(\wt f)<\pi$ implies that $f(\x)=0$ for all points in a small spherical cap centered at $-\v$. Composing $f$ with the stereographic projection, we see that this is a rational function in $\R^d$ vanishing on an open ball, thus $f\equiv 0$. 
\end{remark}
Hence, we can conclude that 
\begin{equation*}
    \rho(d, \mu, N)=\inf_{\substack{0\not \equiv f\in \mathcal{T}(d, \mu, N)\\ f \text{ is polar }}} \r(f).
\end{equation*}
In the rest of the section, by making explicit constructions, we prove the dimension shift result.  
\subsection{Proof of Theorem \ref{thm:reduction-to-dimension-one}} Recall $\mu_p$ denotes the polar part of the measure $\mu$.

\subsubsection{Upper bound}
Let $g\in \mathcal{T}(\mu_p, N)$ be a trigonometric polynomial that is not identically zero. As noted in the remark after the problem \ref{prob:trig-polynomials}, the even part of the function has a smaller radius of last sign change. Hence, we can assume $g$ is even. Consider the function $f:\Sd\to \R$ given by
\begin{equation*}
    f(\x)=g\left(\frac{1}{2\pi}\arccos{(\x\cdot\v)}\right). 
\end{equation*}
We claim that $f\in \mathcal{T}(d, \mu, N)$. First of all, note that $g$ being an even trigonometric polynomial, it only contains cosine terms, thus it can be written as $g(\theta)=P(\cos(2\pi \theta))$, for some polynomial  $P$ of degree at most $N$, and we have  
$f(\x)=P(\x\cdot\v)$. Also note that $f$ is a polar function by definition. Hence, by Lemma \ref{lem:facts-about-zonal-spherical-harmonics}, the spherical harmonics expansion of $f$ can only contain zonal spherical harmonics of degree at most $N$ with the pole $\v$. It is clear that $f(\x)\geq 0$ for all $\x \in \Sd$ with $\x\cdot\v\leq \cos(2\pi \r(g))$, i.e., $\r(f)\leq 2\pi \r(g)$\footnotemark.
\footnotetext{Note that on each side we are using different metrics to define $\r(f)$ and $\r(g)$.}
By Proposition \ref{prop:decomposition-for-polar-measures} we know
\begin{equation*}
    \int_{\Sd} f(\x)\,\d\mu(\x)=\int_{0}^{1/2} f\bigl(\x^\prime \sin(2\pi \theta), \cos(2\pi \theta) \bigr) \,\d\muarc_p(\theta)= \frac{1}{2}\int_{\T} g(\theta)\,\d\mu_p(\theta)\leq 0.
\end{equation*}
Hence, $f\in \mathcal{T}(d, \mu, N)$ and by the choice of $g$ we have $f\not\equiv
0$. Thus
\begin{equation*}
    \rho(d, \mu, N)\leq 2\pi \,\rho(\mu_p, N).
\end{equation*}

\subsubsection{Lower bound}

Without loss of generality, we can assume that there exists a non-trivial $f\in \mathcal{T}(d, \mu, N)$ such that $\r(f)<\pi$, otherwise the upper bound mentioned above and the fact that $\rho(\mu_p, N)\leq 1/2$ completes the proof. Fix any such function $f$, with $\r(f)<\pi$. We have observed in Lemma \ref{lem:symmetrization-of-enn-functions} that $\r(\wt f)\leq \r(f)$, hence we can work with $\wt f$. By definition, $\wt f$ is a polar function, and by Lemma \ref{lem:facts-about-zonal-spherical-harmonics}, we know its development in spherical harmonics has the following form:
\begin{equation*}
    \wt f(\x)=\sum_{k=0}^{N} a_k Z_{\v}^{(k)}(\x)=\sum_{k=0}^{N} a_k \,c_{k, d}\, P_k(\x\cdot\v).
\end{equation*}
Hence, we see that $\wt f$ is the lift of the function $g:\T\to \R$ in the sense $\wt f(\x)=g(\arccos(\x\cdot\v)/2\pi)$, where
\begin{equation*}
    g(\theta)=\sum_{k=0}^{N} a_k\, c_{k, d}\,  P_k\bigl(\cos(2\pi \theta)\bigr).
\end{equation*}
Clearly $g$ is a trigonometric polynomial of degree at most $N$. Also note that by the remark below Lemma \ref{lem:symmetrization-of-enn-functions}, we know $g\not \equiv 0$. Moreover, we have $g(\theta)\geq 0$, for all $\r(\wt f)/2\pi \leq \lvert \theta\rvert\leq 1/2$ and by Proposition \ref{prop:decomposition-for-polar-measures} we have
\begin{equation*}
    0\geq \int_{\Sd} \wt{f}(\x) \, \d\mu(\x)=\int_{\Sd} g(\arccos{(\x \cdot \v)}/2\pi) \, \d\mu(\x) = \frac{1}{2}\int_{\T} g(\theta) \, \d\mu_p(\theta).
\end{equation*}
Thus we conclude  $g\in \mathcal{T}(\mu_p, N)$ and $2\pi \r(g)\leq \r(\wt f)\leq \r(f)$. Therefore, $2\pi \,\rho(\mu_p, N) \leq \rho(d, \mu, N)$.
Combining these two inequalities, we conclude the assertion of Theorem \ref{thm:reduction-to-dimension-one}.

\section{Proof of Theorem \ref{thm:best-radius-of-negativity}}

In this section, we present the proof of our result for trigonometric polynomials on the unit circle. Before giving the proof, we need to recall the machinery of reproducing kernel Hilbert spaces and orthogonal polynomials. In the first part of this section, we will collect relevant facts from \cite{LiReproducingKernelHilbertSpaces1997, LiTrigonometricExtremalFunctionsErdosTuran1999}.  

\subsection{Reproducing kernel Hilbert spaces of polynomials}\label{subsec: rkhs of polynomials}

For each $n\geq 0$, denote by $\mathcal{P}_n$ the set of polynomials of degree at most $n$ with complex coefficients. Recall that, for $P\in \mathcal{P}_n$, we denote by $P^{*, m}$ the reversed polynomial given by  
\begin{equation*}
    P^{*, m}(z)=z^{m}\overline{P(1/\overline{z})}
\end{equation*}
and the shortcut notation $P^{*}=P^{*, \deg(P)}$.

\subsubsection{Setup} Let $P(z)$ be a polynomial of degree $n+1$ such that
\begin{equation}\label{eq:hermite-biehler-inequality}
|P^*(z)|< |P(z)|,
\end{equation}
holds for all $z\in\mathbb{D}$. 

\begin{remark}\label{rem:hermite-biehler-condition-for-zeros}
    Note that the condition \eqref{eq:hermite-biehler-inequality} is equivalent to the fact that $P(z)$ has no zeros in $\mathbb{D}$, and it has at least one zero in $\C\setminus \overline{\mathbb{D}}$.
\end{remark}
  For our purposes, we may assume that $P$ has no zeros on the unit circle. However, this assumption can be dropped with a slight adjustment in several parts of the arguments presented here. Consider the Hilbert space $\mathcal{H}_n(P)$ that consists of polynomials in $\mathcal{P}_n$ and equipped with the inner product
\begin{equation*}
    \langle Q, R \rangle_{\mathcal{H}_n(P)}\coloneqq \int_{\T} Q(z) \overline{R(z)} |P(z)|^{-2}\, \d t,
\end{equation*}
where $z=e^{2\pi i t}$. It is a well-known fact that the space $\mathcal{H}_n(P)$ is a reproducing kernel Hilbert space with this inner product and the function
\begin{equation}\label{eq:reproducing-kernel}
    K(w, z)=\frac{P(z)\overline{P(w)}-P^*(z)\overline{P^*(w)}}{1-\overline{w}z}, \quad \overline{w}z\neq 1,
\end{equation}
is the reproducing kernel\footnote{The values $K(z, z)$, for $z\in \dD$, are defined by passing to the limit $w\to z$.} of the space; see for instance \cite{LiHamiltonRiemannHypothesisPolynomialsOrthogonal1994}.  Thus, it verifies the identity
\begin{equation*}
    \langle Q, K(w, \cdot)\rangle_{\mathcal{H}_n(P)}=Q(w),
\end{equation*}
for all $w\in \C$ and $Q\in \mathcal{H}_n(P)$. Since $K(w, w)=\langle K(w, \cdot), K(w, \cdot)\rangle_{\mathcal{H}_n(P)}$, we know $K(w, w)=0$ implies that $K(w, z)=0$ for all $z\in \mathbb{C}$, by the expression of $K(w, z)$ we deduce $P(w)=P^*(w)=0$ must hold. However, using the estimate \eqref{eq:hermite-biehler-inequality} we see that $w\in \dD$ and by our assumption $P$ has no zeros on $\dD$, a contradiction. Thus, $K(w, w)>0$ for all $w\in \C$.

\medskip

Consider the functions $A(z)$ and $B(z)$ given by 
\begin{align*}
    A(z)=\frac{1}{2} \bigl(P(z)+P^*(z)\bigr) \quad \text{ and } \quad 
    B(z)=\frac{i}{2} \bigl(P(z)-P^*(z)\bigr).
\end{align*}
Observe that the estimate \eqref{eq:hermite-biehler-inequality} implies $\operatorname{deg}(A)=\operatorname{deg}(B)=n+1$ and all of their zeros lie on the unit circle. Moreover, we know $A^*(z)=A(z)$ and $B^*(z)=B(z)$ and $P(z)= A(z)-i B(z)$.  The reproducing kernel of the space $\mathcal{H}_n(P)$ can be represented in terms of $A$ and $B$ as follows:
\begin{equation}\label{eq:reproducing-kernel-with-A-and-B}
    K(w, z)=\frac{2}{i}\frac{B(z)\overline{A(w)}-A(z)\overline{B(w)}}{1-\overline{w}z}, \quad \overline{w}z\neq 1.
\end{equation}
Since the inner product of any of the two becomes $K(\xi_1, \xi_2)$, where $A(\xi_1)=A(\xi_2)=0$, it means $K(\xi_1, z)$ and $K(\xi_2, z)$ are orthogonal. From this, we see that the set of polynomials $\{K(\xi, z): A(\xi)=0\}$ form an orthogonal basis for $\mathcal{H}_n(P)$. 

\smallskip

Considering $e^{i\theta} P(z)$ instead of $P(z)$ in all the places above, we obtain the same Hilbert space $\mathcal{H}_n(P)$. As a general fact we know for every $\theta\in \R$ the family of polynomials given by $\{K(\xi, z) : T_\theta(\xi)=0\}$ forms an orthogonal basis for $\mathcal{H}_n(P)$, where
\begin{equation*}
    T_\theta(z)\coloneqq e^{i\theta} P(z)-e^{-i\theta} P^*(z).    
\end{equation*}
Using this basis and orthogonality, one can obtain the following identity
\begin{equation}\label{eq:quadrature formula for the norm in RKHSoP}
    \lVert Q\rVert_{\mathcal{H}_n(P)}^2=\sum_{T_\theta(\xi)=0} \frac{\lvert Q(\xi)\rvert^2}{K(\xi, \xi)},
\end{equation}
for all polynomials $Q\in \mathcal{H}_n(P)$. This is a polynomial analogue of Theorem 22 of de Branges \cite{deBranges1968-HilbertSpacesOfEntireFunctions}, which was proved in \cite{LiReproducingKernelHilbertSpaces1997}

\subsubsection{The phase function}\label{subsec:phase-function}
Since the polynomial $P(z)$ has no zeros on the unit circle $\partial\mathbb{D}$, and the inequality \eqref{eq:hermite-biehler-inequality} holds, we are allowed to define an analytic branch of $\log P(z)$ in a small neighborhood of the closed unit disk $\overline{\mathbb{D}}$. Define the \emph{phase function} by 
\begin{equation*}
    \varphi(t)\coloneqq  (n+1) \pi t -\operatorname{arg} P(e^{2\pi i  t}),
\end{equation*}
for all $t\in \R$. Note that $\varphi(t)$ is a well-defined continuous function. Since the argument term is a 1-periodic function, we deduce that $\varphi(t+1)-\varphi(t)=\pi (n+1)$.
\begin{lemma}\label{lem:derivative-of-the-phase}
    For all $t\in \R$ we have
    \begin{equation*}
        \varphi^\prime(t)=\pi \cdot \frac{K(e^{2\pi i t}, e^{2\pi i t})}{|P(e^{2\pi i t})|^2}>0.
    \end{equation*}
\end{lemma}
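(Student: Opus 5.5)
The plan is a direct computation: differentiate $\arg P(e^{2\pi i t})$ and match the result with the diagonal value of the reproducing kernel \eqref{eq:reproducing-kernel}, obtained as a limit. Write $z=e^{2\pi i t}$. Since $\log P$ is analytic near $\overline{\mathbb{D}}$, we have
\begin{equation*}
\frac{\d}{\d t}\arg P(e^{2\pi i t})=\operatorname{Im}\frac{\d}{\d t}\log P(e^{2\pi i t})=\operatorname{Im}\Bigl(2\pi i z\frac{P'(z)}{P(z)}\Bigr)=2\pi\operatorname{Re}\Bigl(\frac{zP'(z)}{P(z)}\Bigr).
\end{equation*}
This gives
\begin{equation*}
\varphi'(t)=(n+1)\pi-2\pi\operatorname{Re}\frac{zP'(z)}{P(z)}.
\end{equation*}

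Next we compute $K(z,z)$ for $|z|=1$ by letting $w\to z$ in \eqref{eq:reproducing-kernel}. The cleanest route is to fix $w=z$ on the circle and set $\zeta=rz$ with $r\to1$, or simply to apply L'Hôpital along $w=\zeta\to z$ inside $K(\zeta,\zeta)=\bigl(|P(\zeta)|^2-|P^*(\zeta)|^2\bigr)/(1-|\zeta|^2)$. Take $\zeta=rz$. Then $1-r^2\sim 2(1-r)$, and the numerator vanishes at $r=1$ because $|P^*|=|P|$ on $\dD$. Differentiating in $r$ at $r=1$ gives $\frac{\d}{\d r}|P(rz)|^2=2\operatorname{Re}\bigl(zP'(z)\overline{P(z)}\bigr)$, and similarly for $P^*$.

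Now use the identity $P^*(z)=z^{n+1}\overline{P(1/\overline z)}$. For $|z|=1$ it yields
\begin{equation*}
\operatorname{Re}\Bigl(\frac{zP^{*\prime}(z)}{P^*(z)}\Bigr)=(n+1)-\operatorname{Re}\Bigl(\frac{zP'(z)}{P(z)}\Bigr).
\end{equation*}
This follows by differentiating $\log P^*(rz)=(n+1)\log(rz)+\overline{\log P(z/r)}$ in $r$ at $r=1$. Combining, we get
\begin{equation*}
K(z,z)=-\tfrac12\Bigl[2|P|^2\operatorname{Re}\tfrac{zP'}{P}-2|P^*|^2\operatorname{Re}\tfrac{zP^{*\prime}}{P^*}\Bigr]=|P(z)|^2\Bigl((n+1)-2\operatorname{Re}\tfrac{zP'(z)}{P(z)}\Bigr).
\end{equation*}
Here the minus sign comes from $1-r^2$ decreasing, and we used $|P^*|=|P|$ on the circle. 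Multiplying by $\pi/|P|^2$ reproduces the formula for $\varphi'(t)$.

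Positivity then follows from $K(w,w)>0$ for all $w$, which was established right after \eqref{eq:reproducing-kernel} using the absence of zeros of $P$ on $\dD$. The main obstacle is the sign and normalisation bookkeeping in the boundary limit and in the derivative of $P^*$. I would cross-check the final formula on $P(z)=z^{n+1}\cdot c$ with $|c|$ adjusted, or better on $P(z)=2z^{n+1}-1$, where both sides are explicit.
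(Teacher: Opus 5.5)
Your proposal is correct and follows the paper's proof. Both arguments compute $\frac{\d}{\d t}\arg P(e^{2\pi i t})=2\pi\operatorname{Re}\bigl(zP'(z)/P(z)\bigr)$ and evaluate $K(z,z)$ on $\dD$ by L'H\^opital. You take the limit radially along the diagonal $\zeta=rz$, the paper in $\overline{w}$ with $z$ fixed; these agree because $K$ is a polynomial in $(z,\overline{w})$. Both then use $P^*(z)=z^{n+1}\overline{P(1/\overline{z})}$ to reach $K(z,z)=(n+1)|P(z)|^2-2\operatorname{Re}\bigl(zP'(z)\overline{P(z)}\bigr)$, and take positivity from $K(w,w)>0$.

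A minor point on your sanity checks: $cz^{n+1}$ and $2z^{n+1}-1$ have zeros in $\mathbb{D}$, so they violate \eqref{eq:hermite-biehler-inequality}. The identity still holds for them, but both sides are negative. An admissible check is $P(z)=2-z^{n+1}$, which gives $\varphi'(t)=3(n+1)\pi/\lvert 2-e^{2\pi i (n+1)t}\rvert^2>0$.
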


\begin{proof}
Recall that $K(w, z)$ is given by \eqref{eq:reproducing-kernel} for all $w\neq z$. Passing to the limit $w\to z\in \dD$ we see that 
\begin{equation*}
    K(z, z)=\frac{P(z)\overline{P^\prime(z)}-P^\ast(z)\overline{(P^*)^\prime(z)}}{-z}.
\end{equation*}
Hence, using the identity $P^*(z)=z^{n+1}\overline{P(1/\overline{z})}$ we deduce
\begin{equation*}
        K(e^{2\pi i t}, e^{2\pi i t})=(n+1) |P( e^{2\pi i t})|^2-2\operatorname{Re}\left[ e^{2\pi i t} P^\prime( e^{2\pi i t}) \overline{P( e^{2\pi i t})}\right].
\end{equation*}
Using the identity $\arg P(z)=\operatorname{Im} \log P(z)$ and the fact that $\log P(z)$ is analytic in the neighborhood of the unit circle, we conclude the assertion.
\end{proof}

In the following lemma, we present some properties of zeros of the polynomials $T_\theta$. 

\begin{lemma}\label{lem:interlacing-property-of-zeros}
    For each  real number $\theta$ define the polynomial  $T_\theta(z)\coloneqq e^{i\theta} P(z)- e^{-i\theta}P^*(z)$. The following holds:
    \begin{enumerate}[label=\textnormal{(\roman*)}]
        \item $\deg(T_\theta)=n+1$ and all zeros of $T_\theta$ lie on the unit circle;
        \item For a point  $\xi=e^{2\pi i t}$ to be a zero of $T_\theta(z)$ it is necessary and sufficient that $t$ satisfy the equation
        \begin{equation}\label{eq:modular-equation-for-the-phase}
            \varphi(t)\equiv \theta \pmod{\pi}.
        \end{equation}
        As a consequence, we know that all zeros of $T_\theta$ are simple.
        \item If $\theta_1-\theta_2\not \equiv 0 \pmod{\pi}$, then the zeros of $T_{\theta_1}$ and $T_{\theta_2}$ interlace on the unit circle.
    \end{enumerate}
\end{lemma}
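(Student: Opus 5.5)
The key step is to rewrite $T_\theta$ on the unit circle in terms of the phase function. For $z=e^{2\pi i t}$ we have $P^*(z)=z^{n+1}\overline{P(z)}$. Write $P(e^{2\pi i t})=|P(e^{2\pi i t})|e^{i\arg P(e^{2\pi i t})}$ and $\arg P = (n+1)\pi t-\varphi(t)$. Then
\begin{equation*}
T_\theta(e^{2\pi i t}) = e^{i(n+1)\pi t}\,|P(e^{2\pi i t})|\bigl(e^{i(\theta-\varphi(t))}-e^{-i(\theta-\varphi(t))}\bigr)=2i\,e^{i(n+1)\pi t}\,|P(e^{2\pi i t})|\sin\bigl(\theta-\varphi(t)\bigr).
\end{equation*}
Since $P$ has no zeros on $\dD$ by assumption, a point $e^{2\pi i t}$ on the circle is a zero of $T_\theta$ exactly when $\sin(\theta-\varphi(t))=0$. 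This is the condition $\varphi(t)\equiv\theta \pmod \pi$, which gives the equivalence in (ii).

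For (i), I will first handle the degree. The leading coefficient of $T_\theta$ is $e^{i\theta}a_{n+1}-e^{-i\theta}\overline{P(0)}$. By \eqref{eq:hermite-biehler-inequality} at $z=0$ we have $|P(0)|>|P^*(0)|=|a_{n+1}|$, so this coefficient is nonzero and $\deg T_\theta=n+1$. Next I locate the zeros. In $\mathbb{D}$, the strict inequality $|e^{-i\theta}P^*|<|e^{i\theta}P|$ rules out any zero. Outside $\overline{\mathbb{D}}$, I use the relation $T_\theta^{*}=-T_\theta$ (up to the unimodular factor produced by reversal), which maps zeros $z$ to $1/\bar z$; so exterior zeros would give interior ones, which is impossible. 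Hence all $n+1$ zeros, counted with multiplicity, lie on $\dD$.

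For simplicity and counting, I will use Lemma \ref{lem:derivative-of-the-phase}. Since $\varphi'>0$, $\varphi$ is strictly increasing, and $\varphi(t+1)-\varphi(t)=(n+1)\pi$. Over one period $[t_0,t_0+1)$, the equation $\varphi(t)\equiv\theta \pmod\pi$ therefore has exactly $n+1$ solutions. This yields $n+1$ distinct zeros on the circle. As the degree is $n+1$, they exhaust all zeros and each is simple. Alternatively, at each zero the derivative of $\sin(\theta-\varphi(t))$ equals $\mp\varphi'(t)\neq 0$, which gives simplicity directly.

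For (iii), take $\theta_1\not\equiv\theta_2 \pmod\pi$. By (ii), the zeros of $T_{\theta_1}$ are the points where the increasing function $\varphi$ crosses the level set $\theta_1+\pi\Z$, and similarly for $T_{\theta_2}$. These two level sets are disjoint and alternate along $\R$, with exactly one point of $\theta_2+\pi\Z$ strictly between consecutive points of $\theta_1+\pi\Z$. Because $\varphi$ is a strictly increasing continuous bijection onto its range, the preimages alternate in $t$, so the zeros interlace on the circle.

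The main point needing care is the sign bookkeeping in the reversal identity for $T_\theta$ in (i). The argument relies only on the Hermite--Biehler inequality and Lemma \ref{lem:derivative-of-the-phase}.
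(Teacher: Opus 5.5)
Your proof is correct and follows essentially the paper's route: the Hermite--Biehler inequality for the degree and for placing the zeros on the circle, the phase identity $\arg(e^{i\theta}P)-\arg(e^{-i\theta}P^*)\equiv 2\theta-2\varphi(t) \pmod{2\pi}$ for (ii), and strict monotonicity of $\varphi$ together with $\varphi(t+1)-\varphi(t)=(n+1)\pi$ for simplicity and interlacing. The differences are only in presentation. Your factorization $T_\theta(e^{2\pi i t})=2i\,e^{i(n+1)\pi t}\,|P(e^{2\pi i t})|\sin(\theta-\varphi(t))$, combined with the count of exactly $n+1$ solutions per period, makes the simplicity claim more transparent than the paper's brief appeal to Lemma~\ref{lem:derivative-of-the-phase}. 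Your identity $T_\theta^*=-T_\theta$ plays the role of the paper's reversed inequality outside $\overline{\mathbb{D}}$.
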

\begin{proof}
    (i). First of all, note that the leading coefficient of $T_\theta$ is given by $e^{i\theta} \overline{P^*(0)}-e^{-i\theta} \overline{P(0)}$. Due to the estimate \eqref{eq:hermite-biehler-inequality}, this leading coefficient never vanishes, hence $\deg (T_\theta)=n+1$. By the inequality \eqref{eq:hermite-biehler-inequality} we deduce that $T_\theta$ has no zeros in the unit disk $\mathbb{D}$. Similarly, the reversed inequality holds in $\mathbb{C}\setminus \overline{\mathbb{D}}$ and it implies that $T_\theta$ has no zeros in this set. Thus, the proof of part (i) is complete.

    (ii) For a point $\xi=e^{2\pi i t}$ to satisfy $T_\theta(\xi)=0$ it is necessary and sufficient that 
    \begin{equation*}
       \arg\{e^{i\theta} P(\xi)\}- \arg\{e^{-i\theta}P^*(\xi)\}\equiv0 \pmod{2\pi}.
    \end{equation*}
    Equivalently, 
    \begin{equation*}
        2\theta+\arg\{P(\xi)\}-\arg\{\xi^{n+1}\overline{P(\xi)}\}\equiv 2\theta -2\varphi(t)\equiv 0\pmod{2\pi}.
    \end{equation*}
    Hence, it shows that all the zeros of $T_\theta$ can be obtained by the solutions of the equation \eqref{eq:modular-equation-for-the-phase}. In particular, by Lemma \ref{lem:derivative-of-the-phase} we deduce that all the zeros of $T_\theta$ are simple.  Since $\varphi(t)\pmod{\pi}$ is a 1-periodic function, we can select the points $t_j\in \T$ such that the set of zeros of $T_\theta$ is given by $\{\xi_j=e^{2\pi i t_j} \;|\; j=1,2, \ldots, (n+1)\}$. 

    (iii) Let $t_1<t_2<\ldots <t_{n+1}<t_1+1$ be a set of real numbers such that each $\xi_j=e^{2\pi i t_j}, j=1, 2, ..., n+1$ is a zero of $T_\theta(z)$. 
    By Lemma \ref{lem:derivative-of-the-phase} we know that 
    \begin{equation*}
        \varphi(t_1)<\varphi(t_2)<\ldots<\varphi(t_{n+1})<\varphi(t_1+1)=\pi (n+1) +\varphi(t_1). 
    \end{equation*}
    By part (ii), we know $\varphi(t_j)=\theta+\pi m_j$ for some $m_j\in\Z$. Combining these two facts we deduce that $m_{j+1}-m_j= 1$ and it implies that $\varphi(t_{j+1})-\varphi(t_j)=\pi$. Now, we prove the assertion (iii) by contradiction. Suppose that there are at least two zeros of a polynomial $T_{\theta_1}$ that lie between two zeros of $T_{\theta_2}$. Without loss of generality, we can assume that these zeros are given by 
    \begin{equation*}
        e^{2\pi i t_1}, e^{2\pi i s_1}, e^{2\pi i s_2}, e^{2\pi i t_2},
    \end{equation*}
    where $t_1<s_1<s_2<t_2<t_3<\ldots <t_{n+1}<t_1+1$. By the previous observation and the fact that $\varphi$ is a strictly increasing function, we obtain that 
    \begin{equation*}
        \pi=\varphi(t_2)-\varphi(t_1)>\varphi(s_2)-\varphi(s_1)=\pi.
    \end{equation*}
    Clearly, it is absurd. Hence, we obtain the desired conclusion.
\end{proof}

\subsection{Orthogonal polynomials on the unit circle}\label{subsec: opuc in detail}
In this subsection, we will give a brief overview of the theory of orthogonal polynomials on the unit circle. A more detailed treatment on the subject can be found in \cite{SimonPart1OrthogonalPolynomialsUnitCircle2005}. 

\subsubsection{OPUC for non-trivial measures}
Recall from $\mathsection \ref{subsec: opuc basics}$ that we are working with probability measures on $\dD$ and for a non-trivial probability measure $\mu$ consider the Hilbert space $L^2(\dD, \d\mu)$ with the inner product given by 
\begin{equation*}
    \langle f, g\rangle\coloneqq \int_{\dD} f(z)\overline{g(z)}\,\d\mu(z).
\end{equation*}
Let the sequence $\Phi_n(z)=\Phi_n(z; \d\mu)$ be the sequence of monic orthogonal polynomials in $L^2(\dD, \d\mu)$ and let $\varphi_n(z)\coloneqq \Phi_n(z)/\lVert \Phi_n\rVert_2$ be the sequence of orthonormal polynomials. These polynomials have many interesting properties, including a recursion formula. It is known that  $\Phi_n$ satisfies 
\begin{equation}\label{eq:szego recursion}
    \Phi_{n+1}(z)=z \Phi_{n}(z) -\overline{\alpha_n}\Phi_{n}^{\ast}(z), \quad n=0, 1, \ldots, 
\end{equation}
where $\alpha_n=\alpha_n(\mu) \in \mathbb{C}$ are called \emph{Verblunsky coefficients of $\mu$} and $\Phi_n^\ast=\Phi_n^{\ast,n}$ is the reversed polynomial of $\Phi_n$. One can write the recursion \eqref{eq:szego recursion} in terms of reversed polynomials as follows: 
\begin{equation}\label{eq:szego recursion for reversed polynomials}
        \Phi_{n+1}^*(z)=\Phi_{n}^*(z)-\alpha_{n} z \Phi_{n}(z).
\end{equation}
We see that \eqref{eq:szego recursion} implies $\lvert \alpha_n \rvert<1$. Indeed, we have $\langle z\Phi_n(z), z\Phi_n(z)\rangle=\langle \Phi_n(z), \Phi_n(z)\rangle$ and 
\begin{equation*}
\lVert \Phi_n(z)\rVert^2=\lVert z\Phi_n(z)\rVert^2=\lVert \Phi_{n+1}(z)+\overline{\alpha_n}\Phi_n^*(z)\rVert^2=  \lVert \Phi_{n+1}(z)\rVert^2 + \lvert \alpha_n\rvert^2 \lVert \Phi_n(z)\rVert^2.  
\end{equation*}
Moreover, we have the following identity
\begin{equation}\label{eq:norm via verblunsky coefficients}
    \lVert \Phi_{n+1}\rVert^2=(1-\lvert \alpha_n\rvert^2) \lVert \Phi_{n}\rVert^2=\prod_{j=0}^n(1-\lvert \alpha_j\rvert^2).
\end{equation}

\subsubsection{OPUC for trivial measures} When we have a probability measure $\mu$ with $\#\supp(\mu)=m$, then the Gram--Schmidt process terminates at the polynomial $\Phi_m(z;\d\mu)$. Thus, we can still define $\Phi_0, \Phi_1, \ldots, \Phi_m$ and $\varphi_0, \varphi_1, \ldots, \varphi_{m-1}$. All the identities \eqref{eq:szego recursion}, \eqref{eq:szego recursion for reversed polynomials}, and \eqref{eq:norm via verblunsky coefficients} are still valid for these polynomials. Clearly, the Verblunsky coefficients $\alpha_0, \alpha_1, \ldots, \alpha_{m-1}$ are well-defined and $|\alpha_k|<1$ for all $0\leq k\leq m-2$ and $|\alpha_{m-1}|=1$, which matches with the fact that $\lVert \Phi_{m}\rVert_2=0$.

\subsubsection{General facts about OPUC}
The following lemmata collect the relevant results concerning orthogonal polynomials associated with probability measures on the unit circle. 

\begin{lemma}\label{lem:properties of orthogonal polynomials}
    Let $\mu$ be a probability measure on $\dD$.
    \begin{enumerate}[label=\textnormal{(\roman*)}]
        \item If $\d\mu(z)=\d\mu(\overline{z})$ holds,  then $\Phi_n(z)$ has real coefficients.
        \item $\varphi_n(z)$ has all its zeros in $\mathbb{D}$ and $\varphi^*_n(z)$ has all its zeros in $\C\setminus\overline{\mathbb{D}}$ for $n \geq 1$. Therefore, 
        \begin{equation*}
            \lvert \varphi_n(z)\rvert <\lvert \varphi_n^*(z)\rvert \quad \text{ and }\quad \lvert \Phi_n(z)\rvert <\lvert \Phi_n^*(z)\rvert, \quad z\in \mathbb{D}.
        \end{equation*}
        \item If $\varphi_0, \varphi_1, \ldots, \varphi_n$ exist and are unique, then they satisfy the following Christoffel-Darboux identity
        \begin{equation*}
            \sum_{j=0}^{n-1} \varphi_j(z)\overline{\varphi_j(w)}=\frac{\varphi_n^*(z)\overline{\varphi_n^*(w)}-\varphi_n(z)\overline{\varphi_n(w)}}{1-\overline{w}z},
        \end{equation*}
        for all $\overline{w}z\neq 1$. For $\overline{w}z=1$, the right-hand side of this identity is understood as a limit.
    \end{enumerate}
\end{lemma}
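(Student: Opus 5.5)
The three parts will be handled separately, using the Gram--Schmidt construction, the Szeg\H{o} recursion \eqref{eq:szego recursion} and standard Hilbert space arguments.

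For part (i), write $\mu$ symmetric, $\d\mu(z)=\d\mu(\overline{z})$. Then all moments $c_k=\int z^{-k}\,\d\mu$ are real, since $\overline{c_k}=\int z^{k}\,\d\mu(z)=\int \overline{z}^{\,k}\,\d\mu(\overline{z})=c_k$. The Gram--Schmidt procedure applied to $1,z,z^2,\dots$ then involves only real inner products $\langle z^j,z^k\rangle=c_{k-j}$. Concretely, $\Phi_n$ is the unique monic polynomial of degree $n$ solving a linear system with real Toeplitz matrix $(c_{k-j})$, which is invertible while $\Phi_n$ is uniquely defined. Hence its coefficients are real. Alternatively, $\overline{\Phi_n(\overline{z})}$ is monic and orthogonal to $z^j$, $j<n$, by the symmetry of $\mu$, so uniqueness forces it to equal $\Phi_n$.

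For part (ii), I would use the standard argument that zeros of $\Phi_n$ lie in $\mathbb{D}$. Let $z_0$ be a zero and write $\Phi_n(z)=(z-z_0)q(z)$ with $\deg q=n-1$. Orthogonality gives $\Phi_n\perp q$, so
\begin{equation*}
\|zq\|^2=\|\Phi_n+z_0q\|^2=\|\Phi_n\|^2+|z_0|^2\|q\|^2 .
\end{equation*}
Since $\|zq\|=\|q\|$ on the circle, this yields $|z_0|^2=1-\|\Phi_n\|^2/\|q\|^2<1$.

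The strict inequality needs $\|\Phi_n\|>0$ and $\|q\|>0$. Both hold because the support has more than $n$ points in the range where $\varphi_n$ is defined.

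From the zero locations, $\Phi_n^*(z)=\prod(1-\overline{z_j}z)$ has its zeros at $1/\overline{z_j}$, which lie outside $\overline{\mathbb{D}}$. The Blaschke product $\Phi_n/\Phi_n^*$ is analytic on $\overline{\mathbb{D}}$, has modulus $1$ on $\dD$, and is nonconstant. The maximum principle therefore gives $|\Phi_n|<|\Phi_n^*|$ in $\mathbb{D}$. Normalising by $\|\Phi_n\|$ gives the same statement for $\varphi_n$.

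For part (iii), which I expect to be the main step, I would prove the identity by induction on $n$ via the recursion. The recursion for orthonormal polynomials is obtained from \eqref{eq:szego recursion}, \eqref{eq:szego recursion for reversed polynomials} and \eqref{eq:norm via verblunsky coefficients}:
\begin{equation*}
\rho_n\varphi_{n+1}=z\varphi_n-\overline{\alpha_n}\varphi_n^*,\qquad \rho_n\varphi_{n+1}^*=\varphi_n^*-\alpha_nz\varphi_n,\qquad \rho_n=\sqrt{1-|\alpha_n|^2}.
\end{equation*}
Let $D_n(z,w)=\varphi_n^*(z)\overline{\varphi_n^*(w)}-\varphi_n(z)\overline{\varphi_n(w)}$. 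The base case is $n=1$: then $\varphi_0\equiv1$, and $D_1=(1-\overline{w}z)$ follows from the recursion with $n=0$.

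For the inductive step, expand $\rho_n^2 D_{n+1}$. The cross terms involving $\alpha_n$ cancel, leaving
\begin{equation*}
\rho_n^2 D_{n+1}=(1-|\alpha_n|^2)\bigl[\varphi_n^*(z)\overline{\varphi_n^*(w)}-z\overline{w}\,\varphi_n(z)\overline{\varphi_n(w)}\bigr].
\end{equation*}
Hence $D_{n+1}=D_n+(1-\overline{w}z)\varphi_n(z)\overline{\varphi_n(w)}$. Dividing by $1-\overline{w}z$ adds exactly the term $j=n$ to the sum.

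This induction requires only $|\alpha_k|<1$ for $k<n$. That condition holds precisely when $\varphi_0,\dots,\varphi_n$ exist, so the trivial-measure case is covered as well. The case $\overline{w}z=1$ follows by continuity, because the left-hand side is a polynomial.
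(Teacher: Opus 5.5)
Your proposal is correct. The paper gives no argument of its own here. It cites Carneiro--Littmann (Lemma 25) for (i), Simon's monograph (\S 1.7) for (ii) and Simon's Theorem 2.2.7 for (iii). What you write are the standard proofs behind those citations:
\begin{itemize}
\item for (i), uniqueness of $\Phi_n$, or equivalently the real Toeplitz system;
\item for (ii), the factorization $\Phi_n=(z-z_0)q$ with $\Phi_n\perp q$, followed by the maximum principle for the Blaschke quotient $\Phi_n/\Phi_n^*$;
\item for (iii), induction on the orthonormal Szeg\H{o} recursion, where the cross terms in $\alpha_n$ cancel.
\end{itemize}

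The citation route is shorter. Yours has the advantage of making the hypotheses explicit. Part (i) only needs the Gram matrix of $1,z,\dots,z^{n-1}$ to be invertible, and part (iii) only needs $|\alpha_k|<1$ for $k<n$. Your identity $|z_0|^2=1-\|\Phi_n\|^2/\|q\|^2$, with $\|q\|>0$ since $\deg q<n\le\#\supp(\mu)$, also shows directly that when $\#\supp(\mu)=m$ the zeros of $\Phi_m$ lie on $\partial\mathbb{D}$. This recovers the paper's remark on the range of validity for trivial measures.
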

\begin{proof}
 For part (i), see \cite[Lemma 25]{CarneiroExtremalFunctionsBrangesEuclidean2014}. For part (ii), see \cite[$\mathsection 1.7$]{SimonPart1OrthogonalPolynomialsUnitCircle2005} and finally for part (iii), see \cite[Theorem 2.2.7]{SimonPart1OrthogonalPolynomialsUnitCircle2005}.
\end{proof}

\begin{remark}\label{rem:properties-of-opuc-for-trivial-measures}
    It is crucial to note that if the measure $\mu$ has exactly $m$ points in its support, then the conclusion of part (i) is valid for $n\leq m$ and part (ii) is valid only for the range $n\leq m-1$. For degree $m$ polynomials, we have $\Phi_m(z)$ has all of its zeros on $\dD$. If we further assume the symmetry property $\d\mu(z)=\d\mu(\overline{z})$, then $\Phi_m^\ast(z)=(-1)^m \Phi_m(z)$, and the orthonormal polynomial $\varphi_m(z)$ is not well-defined.
\end{remark}

\subsection{Quadrature formulas for Laurent polynomials} Consider the space $\Gamma_n$ of Laurent polynomials $W$ of the form 
\begin{equation*}
    W(z)=\sum_{k=-n}^n a_k z^k,
\end{equation*}
where $a_k\in \C$. As a combined application of two of the theories outlined above, we can establish a quadrature formula for a function in $\Gamma_n$. 

\subsubsection{Auxiliary spaces}\label{subsec:auxiliary-spaces} We consider the reproducing kernel Hilbert space of polynomials $\mathcal{H}_n(P)$ for some particular choices of polynomials $P$. 
We set 
\begin{equation}\label{eq:choice-of-polynomial-p-as-orthonormal}
    P(z)=\varphi_{n+1}^*(z;\d\mu),    
\end{equation}
whenever the latter exists and is unique.
If the measure $\mu$ is trivial and $\#\supp(\mu)=n+1$, then we define the polynomial $P(z)$ as follows
\begin{equation}\label{eq:choice-of-polynomial-p}
    P(z)=(1+i)\varphi_{n}^*(z;\d\mu) + i z \varphi_{n}(z;\d\mu).
\end{equation}

\begin{proposition}\label{prop:polynomial-properties-for-trivial-measures}
    Let polynomial $P$ be given by \eqref{eq:choice-of-polynomial-p}. Then we have the following: 
    \begin{enumerate}[label=\textnormal{(\roman*)}]
        \item $\deg(P)=n+1$ and $P^*(z)=(1-i) z \varphi_{n}(z;\d\mu) - i \varphi_{n}^*(z;\d\mu)$;
        \item $\lvert P^*(z)\rvert < \lvert P(z)\rvert $, holds for all $z\in \mathbb{D}$;
        \item $P$ has no zeros on the unit circle $\dD$.
    \end{enumerate}
\end{proposition}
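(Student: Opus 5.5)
We work throughout with $\#\supp(\mu)=n+1$, so that $\varphi_0,\dots,\varphi_n$ exist and $|\alpha_n|=1$. The plan is to reduce all three claims to two facts about $\varphi_n$ and $\varphi_n^*$. First, by Lemma~\ref{lem:properties of orthogonal polynomials}(ii), applied with $n\le m-1$ (admissible by Remark~\ref{rem:properties-of-opuc-for-trivial-measures}), $\varphi_n$ has all zeros in $\mathbb{D}$, $\varphi_n^*$ has no zeros in $\overline{\mathbb{D}}$, and $|\varphi_n(z)|<|\varphi_n^*(z)|$ on $\mathbb{D}$. Second, on the unit circle $|\varphi_n^*(z)|=|\varphi_n(z)|$, and this common value is nonzero because $\varphi_n^*$ does not vanish on $\dD$.

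For (i), we note that $\varphi_n^*$ has degree $n$ with leading coefficient $\overline{\varphi_n(0)}$. The leading term of $P$ is therefore that of $iz\varphi_n$, namely $i\kappa_n z^{n+1}$ with $\kappa_n>0$, so $\deg P=n+1$. To compute $P^{*}=P^{*,n+1}$, we apply the rules $(cQ)^{*,m}=\bar c\, Q^{*,m}$, $(\varphi_n^*)^{*,n+1}=z\varphi_n$ and $(z\varphi_n)^{*,n+1}=\varphi_n^*$. These give
\[
P^*=(1-i)z\varphi_n-i\varphi_n^*,
\]
which is the stated formula.

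For (ii), we write $u=\varphi_n^*(z)$ and $v=z\varphi_n(z)$, so that $P=(1+i)u+iv$ and $P^*=(1-i)v-iu$. A direct expansion gives
\[
|P|^2-|P^*|^2=\bigl(2|u|^2+|v|^2+2\operatorname{Re}[(1+i)u\bar v(-i)]\bigr)-\bigl(2|v|^2+|u|^2+2\operatorname{Re}[(1-i)v\, i\bar u]\bigr).
\]
Since $(1+i)(-i)=1-i$ and $\overline{(1-i)i\,\bar u v}=(1-i)u\bar v$, the two cross terms are both equal to $2\operatorname{Re}[(1-i)u\bar v]$ and cancel. Hence
\[
|P|^2-|P^*|^2=|u|^2-|v|^2=|\varphi_n^*(z)|^2-|z|^2|\varphi_n(z)|^2.
\]
On $\mathbb{D}$ this is strictly positive, because $|z|<1$ and $|\varphi_n|<|\varphi_n^*|$. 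This strict positivity is exactly (ii).

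For (iii), we take $|z|=1$ and again use $u,v$. Here $|u|=|v|=c>0$, so $v=\omega u$ for some unimodular $\omega$. Then $P=u\bigl((1+i)+i\omega\bigr)$, and $P(z)=0$ would force $|(1+i)+i\omega|=0$, i.e. $i\omega=-(1+i)$. This is impossible, since the left side has modulus $1$ while the right side has modulus $\sqrt2$. Hence $P$ has no zeros on $\dD$.

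The only delicate point is justifying the boundary identity $|\varphi_n^*|=|\varphi_n|\neq0$ on $\dD$ from Lemma~\ref{lem:properties of orthogonal polynomials} in the trivial-measure range; the rest is algebra.
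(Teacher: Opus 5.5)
Your proposal is correct and follows essentially the same route as the paper: the identity $|P|^2-|P^*|^2=|\varphi_n^*|^2-|z\varphi_n|^2$ gives (ii), and on $\dD$ the comparison $|(1+i)\varphi_n^*|=\sqrt{2}\,|z\varphi_n|>|z\varphi_n|>0$ gives (iii); you just write out the cross-term cancellation that the paper leaves implicit. One small slip is that $\varphi_n^*$ has degree at most $n$, not necessarily exactly $n$, but your degree count only uses that the $z^{n+1}$ coefficient of $P$ comes from $iz\varphi_n$, so nothing is affected.
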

\begin{proof}
    (i). It is clear that the leading coefficient of $P(z)$ is non-zero, hence, $\deg(P)=n+1$. Thus, $P^*(z)=z^{n+1}\overline{P(1/\overline{z})}=(1-i) z \varphi_{n}(z;\d\mu) - i \varphi_{n}^*(z;\d\mu)$.

    (ii). Consider the expression 
    \begin{equation*}
        \lvert P(z)\rvert^2-\lvert P^*(z)\rvert^2=\lvert \varphi_n^*(z)\rvert^2-\lvert z \varphi_n(z)\rvert^2.
    \end{equation*}
    By Lemma \ref{lem:properties of orthogonal polynomials} part (ii) we know 
    \begin{equation*}
        \lvert z \varphi_n(z)\rvert<\lvert \varphi_n(z)\rvert<\lvert \varphi_n^*(z)\rvert
    \end{equation*}
     holds for each $z\in \mathbb{D}$. Therefore, the assertion is true.

     (iii). Note that, on the unit circle, we have $ \lvert z \varphi_n(z)\rvert=\lvert \varphi_n(z)\rvert=\lvert \varphi_n^*(z)\rvert>0$. Therefore, for any point $z\in \dD$ we have 
     \begin{equation*}
         \lvert (1+i)\varphi_{n}^*(z;\d\mu)\rvert = \sqrt{2}\lvert z\varphi_n(z)\rvert>\lvert i z \varphi_n(z)\rvert.
     \end{equation*}
     Thus $P(z)\neq 0$.
\end{proof}

In each case, we can find the polynomial $P$ which has no zeros on $\dD$ and satisfies the condition \eqref{eq:hermite-biehler-inequality}. Moreover, the reproducing kernel of the space $\mathcal{H}_n(P)$ can be written as 
\begin{equation}\label{eq:reproducing-kernel-formula}
    K_n(w, z)=\frac{P(z) \overline{P(w)}-P^*(z) \overline{P^*(w)}}{1-\overline{w}z},
\end{equation}
whenever $\overline{w}z\neq 1$. Using these spaces, we obtain a family of quadrature formulas parametrized by $\theta\in\T$ for each measure $\mu$.
We will prove the following generalization of the identity from \cite[Corollary 26]{CarneiroExtremalFunctionsBrangesEuclidean2014}.
\begin{lemma}\label{lem:quadrature-formula-for-laurent-polynomials}
Let $\mu$ be a probability measure on $\dD$ and $W\in \Gamma_n$ be a Laurent polynomial. Let $P$ be given by \eqref{eq:choice-of-polynomial-p-as-orthonormal} or by \eqref{eq:choice-of-polynomial-p}. For each fixed $\theta\in \R$ we have 
\begin{equation}\label{eq:quadrature formula for W}
    \int_{\dD} W(z)\, \d\mu(z)=\int_{\dD}\frac{W(z)}{|P(z)|^2}\,\d t=\sum_{T_\theta(\xi)=0}\frac{W(\xi)}{K_{n}(\xi, \xi)},
\end{equation}
where $z=e^{2\pi i t}$ and $T_\theta(z)=e^{i\theta}P(z)-e^{-i\theta}P^*(z)$.
\end{lemma}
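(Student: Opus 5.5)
The proof splits into the two equalities in \eqref{eq:quadrature formula for W}. The second equality, between the weighted integral and the sum over zeros of $T_\theta$, comes from the Hilbert space $\mathcal{H}_n(P)$. The first equality, between the $\mu$-integral and the weighted integral, comes from orthogonal polynomial theory and says that $\mu$ and $|P(z)|^{-2}\,\d t$ have the same moments of order $|k|\le n$ (throughout, $z=e^{2\pi i t}$).

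\textbf{Step 1: reduce to products $Q\overline{R}$ on the circle.} On $\dD$ we have $\overline{z}=1/z$, so $z^{k}=z^{a}\overline{z^{b}}$ whenever $k=a-b$ with $0\le a,b\le n$. Every $k$ with $|k|\le n$ has such a representation. Hence each $W\in\Gamma_n$, restricted to $\dD$, is a finite linear combination of products $Q\overline{R}$ with $Q,R\in\mathcal{P}_n$. Both sides of each equality are linear in $W$, so it suffices to treat $W=Q\overline{R}$.

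\textbf{Step 2: the quadrature (second equality).} Polarize \eqref{eq:quadrature formula for the norm in RKHSoP}, or equivalently expand $Q$ and $R$ in the orthogonal basis $\{K_n(\xi,\cdot): T_\theta(\xi)=0\}$. This gives
\begin{equation*}
\langle Q,R\rangle_{\mathcal{H}_n(P)}=\sum_{T_\theta(\xi)=0}\frac{Q(\xi)\overline{R(\xi)}}{K_n(\xi,\xi)}.
\end{equation*}
The left-hand side equals $\int_{\T} W(z)\,|P(z)|^{-2}\,\d t$ by the definition of the inner product. This step is routine once Proposition \ref{prop:polynomial-properties-for-trivial-measures} (in the trivial case) or Lemma \ref{lem:properties of orthogonal polynomials}(ii) (in the non-trivial case) guarantees that $P$ satisfies \eqref{eq:hermite-biehler-inequality} and has no zeros on $\dD$.

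\textbf{Step 3: the moment identity (first equality), which is the main work.} I will show that the two inner products $\int Q\overline{R}\,\d\mu$ and $\langle Q,R\rangle_{\mathcal{H}_n(P)}$ agree on $\mathcal{P}_n$. Two reproducing kernels on the same finite-dimensional space coincide exactly when the inner products coincide, so it is enough to compare kernels.
\begin{itemize}
\item The reproducing kernel of $\mathcal{P}_n\subset L^2(\d\mu)$ is $\sum_{j=0}^{n}\varphi_j(z)\overline{\varphi_j(w)}$.
\item When $P=\varphi_{n+1}^*$, the Christoffel--Darboux identity of Lemma \ref{lem:properties of orthogonal polynomials}(iii), applied with $n+1$ in place of $n$, gives exactly \eqref{eq:reproducing-kernel-formula}, using $P^*=\varphi_{n+1}$. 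So the kernels agree.
\item When $\#\supp(\mu)=n+1$ and $P$ is given by \eqref{eq:choice-of-polynomial-p}, compute $|P(z)|^2-|P^*(z)|^2$ in bilinear form, using Proposition \ref{prop:polynomial-properties-for-trivial-measures}(i). With $u=\varphi_n^*$ and $v=z\varphi_n$, we have $P=(1+i)u+iv$ and $P^*=(1-i)v-iu$. Then
\begin{equation*}
P(z)\overline{P(w)}-P^*(z)\overline{P^*(w)}=\varphi_n^*(z)\overline{\varphi_n^*(w)}-z\overline{w}\,\varphi_n(z)\overline{\varphi_n(w)},
\end{equation*}
because the cross terms cancel: $(1+i)(-i)-i(1+i)=0$.
\item In the trivial case, divide this by $1-\overline{w}z$. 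Adding and subtracting $\varphi_n(z)\overline{\varphi_n(w)}$ gives the Christoffel--Darboux expression for $\sum_{j=0}^{n-1}\varphi_j(z)\overline{\varphi_j(w)}$ plus $\varphi_n(z)\overline{\varphi_n(w)}$. That is, $K_n=\sum_{j=0}^{n}\varphi_j(z)\overline{\varphi_j(w)}$, which is again the reproducing kernel of $\mathcal{P}_n$ in $L^2(\d\mu)$.
\item In the trivial case, $L^2(\d\mu)$ restricted to $\mathcal{P}_n$ is still a genuine inner product, since $n+1$ support points mean no nonzero polynomial of degree $\le n$ vanishes $\mu$-a.e. So the kernel comparison is legitimate there too.
\end{itemize}

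Combining Steps 1–3 proves \eqref{eq:quadrature formula for W}. The only delicate points are the algebra in the trivial case and checking that Christoffel--Darboux is available at the needed index: $\varphi_{n+1}$ in the first case and $\varphi_n$ in the second. This is exactly why the two cases use different choices of $P$.
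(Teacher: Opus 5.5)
Your proposal is correct and follows essentially the paper's route: identify $K_n(w,z)=\sum_{j=0}^{n}\varphi_j(z)\overline{\varphi_j(w)}$ via Christoffel--Darboux, so that the $L^2(\d\mu)$ and $\mathcal{H}_n(P)$ inner products coincide on $\mathcal{P}_n$, and then read off the quadrature from the orthogonal basis $\{K_n(\xi,\cdot):T_\theta(\xi)=0\}$. The differences are cosmetic: the paper checks the Gram matrix on $q_j=K_n(\xi_j,\cdot)/\sqrt{K_n(\xi_j,\xi_j)}$ and handles the monomials $z^j$ directly instead of polarizing over $Q\overline{R}$, and it does not write out the trivial-case algebra that you supply. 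One small slip: the cross terms cancel because $(1+i)\overline{i}-(-i)\overline{(1-i)}=(1+i)(-i)-(-i)(1+i)=0$, not because $(1+i)(-i)-i(1+i)=0$ (that expression equals $2-2i$), although your displayed identity for $P(z)\overline{P(w)}-P^*(z)\overline{P^*(w)}$ is correct.
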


\begin{proof}
Recall the choice of $P(z)$ from \eqref{eq:choice-of-polynomial-p-as-orthonormal} and \eqref{eq:choice-of-polynomial-p}. Using the representation \eqref{eq:reproducing-kernel-formula} of the reproducing kernel $K_n$ of the space $\mathcal{H}_n(P)$ and the Christoffel-Darboux identity from Lemma \ref{lem:properties of orthogonal polynomials} we obtain that 
\begin{equation}\label{eq:christoffel-darboux-identity-of-K}
    K_n(w, z)=\sum_{j=0}^n \varphi_j(z;\d\mu)\overline{\varphi_j(w; \d\mu)}.
\end{equation}

Fix $\theta\in \R$ and consider the polynomial $T_\theta(z)$ whose zeros are denoted by $\xi_1, \xi_2, \ldots, \xi_{n+1}$. Now consider the set of polynomials given by 
\begin{equation*}
    q_j(z)=\frac{K_n(\xi_j, z)}{\sqrt{K_n(\xi_j, \xi_j)}}, \quad 1\leq j\leq n+1.
\end{equation*} 
Recall the fact that $\mathcal{H}_n(P)=\mathcal{H}_n(e^{i\theta}P)$ and the formula \eqref{eq:reproducing-kernel-with-A-and-B}. Since $\xi_1, \xi_2, \ldots, \xi_{n+1}$ are distinct zeros of the polynomial $T_\theta$ we know the orthogonality of $K_n(\xi_j, \cdot)$ and $K_n(\xi_k, \cdot)$ in $\mathcal{H}_n(P)$.  Thus, we know $K_n(\xi_j, \xi_k)=0$, for all $j\neq k$ and therefore $q_1, q_2, \ldots, q_{n+1}$ is a basis of the space $\mathcal{P}_n$. Also, note that using \eqref{eq:christoffel-darboux-identity-of-K} we get 
\begin{equation*}
    \begin{split}
        \langle q_j, q_k\rangle_{L^2(\dD, \d\mu)}&=\frac{\langle K_n(\xi_j, \cdot), K_n(\xi_k, \cdot)\rangle_{L^2(\dD, \d\mu)}}{\sqrt{K_n(\xi_j, \xi_j)}\sqrt{ K_n(\xi_k, \xi_k)}}\\
        &=\frac{1}{\sqrt{K_n(\xi_j, \xi_j) K_n(\xi_k, \xi_k)}}\sum_{0\leq r, s\leq n}\overline{\varphi_r(\xi_j)}\varphi_s(\xi_k)\; \langle \varphi_r, \varphi_s\rangle_{L^2(\dD, \d\mu)}\\
        &=\frac{K_n(\xi_j, \xi_k)}{\sqrt{K_n(\xi_j, \xi_j) K_n(\xi_k, \xi_k)}}=\langle q_j, q_k\rangle_{\mathcal{H}_n(P)}.
    \end{split}
\end{equation*}
Since the $q_j$'s form a basis of the space $\P_n$, we conclude that the inner products $\langle \cdot , \cdot \rangle_{L^2(\dD, \d\mu)}$  and $\langle \cdot, \cdot \rangle_{\mathcal{H}_n(P)}$ are the same for elements of $\P_n$. 
\medskip

Now we come to the statement of the lemma. Because of the linearity, it suffices to prove the statement for $W_j(z)=z^j$, with $-n\leq j\leq n$.  First, consider the case $0\leq j\leq n$. We have 
\begin{equation*}
    \int_{\dD} z^j\, \d\mu(z)=\langle W_j, 1\rangle_{L^2(\dD, \mu)}=\langle W_j, 1\rangle_{\mathcal{H}_n(P)}=\int_{\dD}\frac{z^j}{|P(z)|^2}\, \d t.
\end{equation*}
Moreover, using the identity $\langle q_j, q_k\rangle=\delta_{j,k}$, we can write $1=\sum_{k=1}^{n+1} \langle 1, q_k\rangle q_k(z)$ and 
\begin{equation*}
    \langle W_j, 1\rangle_{\mathcal{H}_n(P)}=\sum_{k=1}^{n+1}\overline{\langle 1, q_k\rangle}\langle W_j, q_k\rangle=\sum_{k=1}^{n+1}\frac{W_j(\xi_k)}{K_n(\xi_k, \xi_k)},
\end{equation*}
where in the last identity we used the fact that $\langle Q, q_k\rangle=Q(\xi_k)/\sqrt{K_n(\xi_k, \xi_k)}$ for all polynomials $Q\in \P_n=\mathcal{H}_n(P)$.

For $-n\leq j\leq -1$, we write
\begin{equation*}
    \int_{\dD} z^j\, \d\mu(z)=\langle 1, W_{-j}\rangle=\int_{\dD} \frac{\overline{W_{-j}(z)}}{|P(z)|^2}\,\d t=\int_{\dD}\frac{z^j}{|P(z)|^2}\,\d t.
\end{equation*}
Similarly, we can conclude the final identity using the fact that $\overline{\xi_k^{-j}}=\xi_k^{j}$. 
\end{proof}

\begin{remark}
    Note that the contents of the last lemma are also known as $(n+1)$-point quadrature formula, and a different proof can be found in \cite[see $\mathsection 6$ and $\mathsection 7$]{JonesNjastadThronMomentTheoryOnUnitCircle}. There, authors refer to the polynomials $T_\theta$ as \emph{para-orthogonal} polynomials. We chose to present it here through a general theory of Hilbert spaces of polynomials.
\end{remark}

\subsection{Proof of Theorem \ref{thm:best-radius-of-negativity}}
We first prove part (ii), using an explicit construction. Let $\#\supp(\mu)=M\leq N<\infty$ and  let $z_1, z_2, \ldots, z_M\in\dD$ be all points in $\supp(\mu)$. Since we have the monotonicity $\rho(\mu, N+1)\leq \rho(\mu,N)$, it suffices to show that $\rho(\mu, M)=0$.
Consider the trigonometric polynomial given by 
\begin{equation*}
    f(x)=\lvert P(e^{2\pi i x})\rvert^2,
\end{equation*}
where $P(z)=\prod_{j=1}^{M} (z-z_j)$ is a polynomial of degree $M$. Clearly, $\deg(f)=M$ and on the other hand $\r(f)=0$, because $f\geq 0$ everywhere on $\T$. Moreover, 
\begin{equation*}
\int_{\T} f(x)\d\mu(x)=\sum_{j=1}^{M} \mu(\{z_j\}) \lvert P(z_j)\rvert^2=0.    
\end{equation*}
Therefore, $f\in \mathcal{T}(\mu, M)$ and $f\not\equiv 0$, hence, $\rho(\mu, M)=0$. 

\smallskip

Now, we prove part (i). To prove the lower bound, we argue by contradiction. Suppose that there exists $0\not\equiv f\in \mathcal{T}(\mu, N)$ such that $\r(f)<\theta_1$ and 
\begin{equation*}
   f(x)=\sum_{n=-N}^N \widehat{f}(n) \, e^{2\pi i n x}.
\end{equation*}
Since $f$ can be extended to a holomorphic function and $f\in \mathcal{T}(\mu, N)$, there exists a point $s\in (\r(f), \theta_1)$ such that $f(s)>0$. 
Set $P(z)=\varphi_{N+1}^\ast(z; \d\mu)$ or $P(z)=(1+i)\varphi_{N}^*(z;\d\mu) + i z \varphi_{N}(z;\d\mu)$ as discussed in $\mathsection \ref{subsec:auxiliary-spaces}$. Also set $A_{N+1}(z)=(P(z)+P^\ast(z))/2$, and  consider the phase function $\varphi$ defined as in $\mathsection\ref{subsec:phase-function}$. Now, take any $\theta\in \R$ which satisfies
\begin{equation*}
   \theta \equiv \varphi(s)\pmod{\pi}.
\end{equation*}
Using Lemma \ref{lem:interlacing-property-of-zeros}, we deduce
$T_\theta(z)$ has a simple zero at $e^{2\pi i s}$ hence $T_\theta$ and $T_{\pi/2}=2 i A_{N+1}$ have distinct roots, thus $\theta \not\equiv \pi/2 \pmod{\pi}$. By the interlacing property of zeros of $T_\theta$ and $T_{\pi/2}$ we deduce that $T_\theta(z)$ has no zeros in the arc $\{e^{2\pi i x} : -\r(f)\leq x\leq \r(f)\}$. Now consider the  Laurent polynomial associated with the function $f$
\begin{equation*}
   W_f(z)=\sum_{n=-N}^N \widehat{f}(n) z^n.
\end{equation*}
Using $\int_{\T} f(x)\,\d\mu(x)\leq 0$ and the quadrature formula shown in Lemma \ref{lem:quadrature-formula-for-laurent-polynomials} we get the following contradiction
\begin{equation*}
   0\geq \int_{\T}f(x)\, \d\mu(x)=\int_{\dD} W_f(z)\, \d\mu(z)=\sum_{T_\theta(\xi)=0} \frac{W_f(\xi)}{K_{N}(\xi, \xi)}\geq\frac{f(s)}{K_N(e^{2\pi i s}, e^{2\pi i s})}> 0.
\end{equation*}
Therefore, we conclude $\r(f)\geq \theta_1$.

\smallskip

To complete the proof, we introduce the function $f$ defined by
\begin{equation*}
    f(x)=\frac{|A_{N+1}(e^{2\pi i x})|^2}{\cos(2\pi \theta_1)-\cos(2\pi x)}.
\end{equation*}
We claim that $f$ is a real-valued trigonometric polynomial of degree exactly $N$. To see this, note first that the numerator $|A_{N+1}(e^{2\pi i x})|^2$ is a non-negative trigonometric polynomial vanishing at $x = \theta_1$. In either case of the definition
\begin{equation*}
    A_{N+1}(z)=\frac{\varphi_{N+1}(z;\d\mu)+\varphi_{N+1}^*(z;\d\mu)}{2}
    \quad \text{or} \quad 
    A_{N+1}(z)=\frac{z\varphi_{N}(z;\d\mu)+\varphi_{N}^*(z;\d\mu)}{2},
\end{equation*}
$A_{N+1}$ has only real coefficients by Lemma \ref{lem:properties of orthogonal polynomials} (i), and hence the numerator also vanishes at $x = -\theta_1$. Since the denominator $\cos(2\pi\theta_1) - \cos(2\pi x)$ vanishes precisely at $x = \pm\theta_1$, the apparent singularities of $f$ at these points are removable. Therefore, the quotient is everywhere 
well-defined, and $f$ reduces to a real-valued trigonometric polynomial of degree exactly $N$. By construction, $f$ changes sign only at $x=\pm\theta_1$ and vanishes with multiplicity exactly two at every point $|x|\neq\theta_1$ for which $A_{N+1}(e^{2\pi i x})=0$. Applying the quadrature formula \eqref{eq:quadrature formula for W} yields 
$\int_{\T} f(x)\,\d\mu(x)=0$, so that $f\in\mathcal{T}(\mu, N)$ 
with $\r(f)=\theta_1$. This, combined with the previous lower bound, gives $\rho(\mu, N)=\theta_1$.

\smallskip

It remains to establish the uniqueness of the extremizers. Let $g\in\mathcal{T}(\mu, N)$ be a trigonometric polynomial satisfying $\r(g)=\theta_1$. Applying the quadrature formula for the Laurent polynomial $W_g$ gives
\begin{equation*}
    0\geq \int_{\T} g(x)\,\d\mu(x)
    =\sum_{A_{N+1}(\xi)=0}\frac{W_g(\xi)}{K_{N}(\xi, \xi)}\geq 0,
\end{equation*}
so equality holds throughout, which forces $W_g(\xi)=0$ at every zero of $A_{N+1}$. Since $\r(g)=\theta_1$, we have $W_g(e^{2\pi i x})\geq 0$ for all $\theta_1\leq |x|\leq 1/2$. Consequently, $g$ must vanish with even multiplicity at each point $x$ with $|x|\neq\theta_1$ such that $A_{N+1}(e^{2\pi i x})=0$, which implies that $g$ is a scalar multiple of $f$ as given by~\eqref{eq:extremizer-function}. Since $\deg(g)\leq N=\deg(f)$, we conclude that $g(x)=Cf(x)$ for some constant $C>0$. This concludes the proof.

\section{Applications}
\subsection{Relation between problems \ref{prob:trig-polynomials} and \ref{prob:polynomials}}

We start with an observation that establishes the connection between these two problems.
Let the function $\operatorname{T} :[0,1] \to [0,1/2]$, given by 
$$\operatorname{T}(x)=\frac{1}{2\pi}\arccos(1-2x).$$

\begin{proposition}\label{prop:equivalence-with-polynomial-problem}
Let $\mu$ be a measure on $[0,1]$ and we set its push-forward as $\nuarc:=\operatorname{T}_{\#} \mu$. The extremal problems \textnormal{\ref{prob:trig-polynomials}} and \textnormal{\ref{prob:polynomials}} are related with the following identity
\begin{equation*}
  \Omega(\mu,\, N) =\frac{1- \cos\bigl(2\pi\,\rho(\nu, N)\bigr)}{2},
\end{equation*}
where $\nu$ is a symmetric extension of $\nuarc$ to $\T$.
\end{proposition}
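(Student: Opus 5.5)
The plan is to transfer polynomials on $[0,1]$ to even trigonometric polynomials on $\T$ through the substitution $x=\frac{1-\cos(2\pi\theta)}{2}=\sin^2(\pi\theta)$. This is the inverse of $\operatorname{T}$ on $[0,1/2]$, and it is increasing from $[0,1/2]$ onto $[0,1]$. I will show that this correspondence is a bijection between the admissible classes that preserves admissibility and maps radii of last sign change through the increasing function $h(r)=\frac{1-\cos(2\pi r)}{2}$. Taking infima then gives the identity.

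\textbf{Step 1 (classes correspond).} Given a real polynomial $p$ of degree at most $N$, define $g(\theta)=p\bigl(\tfrac{1-\cos(2\pi\theta)}{2}\bigr)$. This is an even real trigonometric polynomial of degree at most $N$. Conversely, every even real trigonometric polynomial of degree at most $N$ is a polynomial of degree at most $N$ in $\cos(2\pi\theta)$, and hence arises in this way. For the integrals, $\nu$ is the symmetric extension of $\nuarc=\operatorname{T}_{\#}\mu$, with the convention compatible with the notation $\muarc$: mass in $(0,1/2)$ is split evenly between $\pm\theta$, and atoms at $0$ and $1/2$ are kept. Then
\begin{equation*}
\int_{\T} g\,\d\nu = 2\int_{[0,1/2]} g\,\d\nuarc = 2\int_0^1 p\,\d\mu ,
\end{equation*}
so the sign condition is preserved up to a positive factor. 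Whatever the precise normalisation of the extension, this positive factor is irrelevant by Remark~\ref{rem: general remarks}(1).

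\textbf{Step 2 (radii correspond).} Since $g$ is even and $x=h(|\theta|)$ is a strictly increasing bijection from $[0,1/2]$ onto $[0,1]$, we have $g(\theta)\ge 0$ for all $|\theta|\ge r$ if and only if $p(x)\ge0$ for all $x\ge h(r)$. Hence $\r(p)=h(\r(g))$. It is also clear that $g\not\equiv0$ if and only if $p\not\equiv 0$. Combining this with Step 1 gives
\begin{equation*}
\Omega(\mu,N)=\inf h(\r(g)),
\end{equation*}
where the infimum runs over nonzero \emph{even} $g\in\mathcal T(\nu,N)$. Because $h$ is continuous and increasing, the right-hand side equals $h$ applied to the infimum of $\r(g)$ over that class.

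\textbf{Step 3 (removing the evenness restriction).} It remains to check that restricting to even $g$ does not change $\rho(\nu,N)$. By Remark~\ref{rem: general remarks}(2), for $g\in\mathcal T(\nu,N)$ the even part $g_e$ lies in $\mathcal T(\nu,N)$ and satisfies $\r(g_e)\le\r(g)$. This is the main subtle point, because $g_e$ may vanish identically when $g$ is odd. I would handle it as follows. An odd real trigonometric polynomial is nonnegative on $\{|\theta|\ge r\}$ only if it vanishes there, since $g(-\theta)=-g(\theta)$. For $r<1/2$ this is an open set, so such a $g$ is identically zero. Therefore if $\r(g)<1/2$ and $g\not\equiv0$, then $g_e\not\equiv0$. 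If instead $\r(g)=1/2$, the bound $\rho\le 1/2$ is already attained: take $g\equiv -1$, which is even, belongs to $\mathcal T(\nu,N)$, and has $\r(g)=1/2$, giving $\Omega\le 1=h(1/2)$ correspondingly. Hence the infimum over even functions equals $\rho(\nu,N)$, and the identity follows.
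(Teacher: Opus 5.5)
Your argument is correct and is essentially the paper's proof: you use the same two constructions, $p\mapsto p\bigl(\tfrac{1-\cos(2\pi t)}{2}\bigr)$ and $g\mapsto g\circ\operatorname{T}$ (via Chebyshev polynomials). Your Step~3 usefully makes explicit the reduction to even $g$ that the paper leaves implicit, including the case $g_e\equiv 0$, and your relation $\r(p)=h(\r(g))$ is the correct form of the paper's misprinted $\r(f)=\frac{1}{2\pi}\arccos(1-\r(p))$. One harmless slip: under the convention you state (mass in $(0,1/2)$ split evenly, atoms kept) one gets $\int_{\T} g\,\d\nu=\int_{[0,1/2]} g\,\d\nuarc$ with factor $1$, since the factor $2$ belongs to the paper's convention (mass reflected, atoms at $0,1/2$ doubled); only the sign matters, so nothing changes.
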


\begin{proof}
The equivalence is established by two explicit constructions. If $p(x)$ is a polynomial in $\mathcal{P}(\mu,N)$, then the trigonometric
polynomial $f(t) \coloneqq p\bigl(\frac{1-\cos(2\pi t)}{2}\bigr)$ belongs to $\mathcal{T}(\nu, N)$ and satisfies
$\r(f) = \frac{1}{2\pi}\arccos(1-\r(p))$. Indeed, we have the identity
\begin{equation*}
    \int_{\T} f(t)\,\d\nu(t)=2\int_{0}^{1/2} f(t)\, \d\nuarc(t)=2\int_{0}^{1} p(x)\, \d\mu(x)\leq 0.
\end{equation*}

Conversely, given an even trigonometric polynomial $g \in \mathcal{T}(\nu, N)$, define
\begin{equation*}
    q(x) \coloneqq (g\circ \operatorname{T}) (x)=\sum_{k=0}^{N} a_k \cos \bigl(k\arccos(1-2x)\bigr)=\sum_{k=0}^{N} a_k\, T_{k}(1-2x),
\end{equation*}
where the last equality uses the Chebyshev polynomials $T_{k}$.
Immediately, $q$ is a polynomial of degree at most $N$ and 
$q \in \mathcal{P}(\mu, N)$. Together, these two constructions establish the equivalence between $\rho(\nu, N)$ and $\Omega(\mu, N)$,
and show that the extremizers of both problems are in one-to-one correspondence.
\end{proof}
\subsection{Proof of Corollary \ref{cor:uniform-measures-on-spheres}}Recall that Jacobi polynomial $P_n^{(\alpha,\beta)}(x)$ of degree $n$
with parameters $\alpha, \beta > -1$ is defined by the Rodrigues formula:
\begin{equation}\label{eq:definition-of-jacobi-polynomial}
P_n^{(\alpha,\beta)}(x) = \frac{(-1)^n}{2^n n!} (1-x)^{-\alpha}(1+x)^{-\beta} \frac{\d^n}{\d x^n} \left[(1-x)^{n+\alpha}(1+x)^{n+\beta}\right]
\end{equation}
for $x \in (-1, 1)$. When $\alpha = \beta = 0$, the Jacobi polynomials reduce to the Legendre polynomials, and when $\alpha = \beta$, they become the Gegenbauer (ultraspherical) polynomials. In fact, Jacobi polynomials are the orthogonal polynomials associated with the measure given by 
\begin{equation*}
   \d\omega_{\alpha, \beta}(x)= (1-x)^\alpha (1+x)^\beta \d x, \quad x\in (-1, 1).
\end{equation*}
More on Jacobi polynomials can be found in \cite{SzegoOrthogonalPolynomials1939} and \cite{Ismail2005ClassicalAndQuantum}.

Let  $\nu_{\alpha, \beta}$ be a measure on $[0, 1]$  given by $\d\nu_{\alpha, \beta}= x^\alpha (1-x)^\beta\d x$. Clearly it is related to  $\omega_{\alpha, \beta}$ via the identity 
\begin{equation*}
    \int_{0}^1 f(x)\, \d\nu_{\alpha, \beta}(x)=2^{-(\alpha+\beta+1)}\int_{-1}^1 f((1-y)/2) \, \d\omega_{\alpha, \beta}(y). 
\end{equation*}
From this identity, it is clear that $\Psi_{n}(x; \d\nu_{\alpha, \beta})=C_{\alpha, \beta} P_n^{(\alpha, \beta)}(1-2x)$, for $x\in [0, 1]$. 
Moreover, we know $(1-x)\d\nu_{\alpha, \beta}(x)=\d\nu_{\alpha, \beta+1}(x)$. On the other hand, the polar part $\mu_{d-1}$ of the measure $\sigma_d$ is given by \eqref{eq:polar-part-of-surface-measure} also related with these measures for $\alpha=\beta=(d-2)/2$, i.e., 
\begin{equation*}
    \int_{0}^{1/2} f\bigl(\sin^2(\pi \theta)\bigr) \sin(2\pi \theta)^{d-1}\, \d \theta=\frac{2^{2\alpha}}{\pi}\int_{0}^1 f(x)\, \d\nu_{\alpha, \alpha}(x).
\end{equation*}

Using Proposition \ref{prop:equivalence-with-polynomial-problem} and the result of Theorem \ref{thm:sign uncertainty for polynomials} we conclude that 
\begin{align*}
    \rho(\mu_{d-1}, 2n-1)=\frac{1}{2\pi}\arccos\bigl(1-2\Omega(\nu_{\alpha, \alpha}, 2n-1)\bigr)=\frac{1}{2\pi}\arccos(1-2 x_{n,1})\\
    \intertext{ and }
    \rho(\mu_{d-1}, 2n)=\frac{1}{2\pi}\arccos\bigl(1-2\Omega(\nu_{\alpha, \alpha}, 2n)\bigr)=\frac{1}{2\pi}\arccos(1-2\xi_{n,1})
\end{align*}
where $x_{n,1}$ and $\xi_{n,1}$ are the smallest zeros of orthogonal polynomials of degree $n$ associated with measures $\nu_{\alpha, \alpha}$ and $\nu_{\alpha, \alpha+1}$. Using the identity $\Psi_{n}(x; \d\nu_{\alpha, \beta})=C_{\alpha, \beta} P_n^{(\alpha, \beta)}(1-2x)$ we see that $1-2x_{n, 1}$ is the largest zero of $P_n^{(\alpha, \alpha)}$ and $1-2\xi_{n,1}$ is the largest zero of $P_n^{(\alpha, \alpha+1)}$. With this, we conclude the proof of Corollary \ref{cor:uniform-measures-on-spheres}. The result of Corollary \ref{cor:yudins-problem} is an immediate consequence of this result.

\subsection{Proof of Theorem \ref{thm:equivalence-with-quadrature-problem}}
First observe that, if $N\geq m=\#\supp(\mu)$, then we have seen that $\rho(\mu, N)=0$. Suppose $\xi$ is a set of nodes of a  $(N+1)$-point quadrature formula, thus $\exists \lambda_{j}>0$ for each $j=1, 2, \ldots, N+1$. As a particular trigonometric polynomial we choose $f(x)=\lvert \Phi_m(e^{2\pi i x}; \d\mu)\rvert^2$. Then we must have 
\begin{equation*}
    0=\int_{\T} f(x)\, \d\mu(x)=\sum_{j=1}^{N+1} \lambda_{j} f(\xi_j).
\end{equation*}
Clearly, it forces the function $f$ to vanish at each point $\xi_j$, and it is absurd because $\Phi_m(z; \d\mu)$ is a degree $m$ polynomial which cannot have $N+1$ zeros. Thus, in this case, there is no quadrature formula with $N+1$ nodes, and hence $\Lambda(\mu, N+1)=0$.

\smallskip

Now assume $N<\#\supp(\mu)$, and we know $\rho(\mu, N)>0$. By Lemma \ref{lem:quadrature-formula-for-laurent-polynomials}, we know there exist several quadrature formulas, and we can assume $\Lambda(\mu, N+1)>0$. 
Fix $\epsilon>0$ sufficiently small, let $\xi$ be set of nodes with $N+1$ points and 
\begin{equation*}
    \min\bigl\{|\xi_j| : \xi_j \in \xi \bigr\}>\Lambda(\mu, N+1)-\epsilon,
\end{equation*}
thus there exist weights $\lambda_j>0, 1\leq j\leq N+1$ for which \eqref{eq:n-point-quadrature} holds. Suppose by contradiction that there exists a non-trivial $f \in \mathcal{T}(\mu, N)$ such that
$\r(f) < \min\{|\xi_j| : \xi_j \in \xi\}$.
Since $f$ is non-negative on all nodes of $\xi$ and has non-positive $\mu$-integral, the quadrature identity \eqref{eq:n-point-quadrature} gives
\begin{equation*}
    0 \geq \int_{\T} f(x)\,\d\mu(x)
    = \sum_{j=1}^{N+1} \lambda_j\,f(\xi_j) \geq 0,
\end{equation*}
forcing $f$ to vanish, with even multiplicities, at every point of $\xi$. However, from a classical result on the number of zeros of trigonometric polynomials \cite[Chapter X, Theorem 1.7]{ZygmundTrigonometricSeries} we know $f$ can have at most $2N$ zeros. This contradicts $f \not\equiv 0$, so no such $f$ can exist. Therefore, for every $f\in \mathcal{T}(\mu, N)$ we have 
\begin{equation*}
    \r(f)\geq \min\{|\xi_j| : \xi_j \in \xi\}>\Lambda(\mu, N+1)-\varepsilon.    
\end{equation*}
By taking the limit the infimum over $f$ and letting $\varepsilon\to 0$ we deduce
\begin{equation*}
    \rho(\mu, N) \geq \Lambda(\mu, N+1).
\end{equation*}

\smallskip

For the converse, one needs to construct a specific quadrature formulathath achieves the equality. This was already done in the proof of Theorem \ref{thm:best-radius-of-negativity}. Recall that the zeros of the polynomial $A_{N+1}(z;\d\mu)$ are the quadrature nodes for Laurent polynomials. Let $\theta_1, \theta_2, \ldots, \theta_{N+1}$ be such that $A_{N+1}\bigl(e^{2\pi i \theta_k}\bigr)=0$, for each $1\leq k\leq N+1$.
In this case, we know $\rho(\mu, N)=\theta_1$ and by the identity \eqref{eq:quadrature formula for W} we conclude that the set $\{\theta_1, \ldots, \theta_{N+1}\}$ is a set of nodes of a quadrature. Therefore, $\Lambda(\mu, N+1)\geq \theta_1=\rho(\mu, N)$. It completes the proof.

\section{Solution for Polynomials: Proof of Theorem \ref{thm:sign uncertainty for polynomials}}\label{sec:solution-for-polynomials}
Proposition \ref{prop:equivalence-with-polynomial-problem} yields the solution to the problem \ref{prob:polynomials} by taking the measure to the circle $\T$ and applying Theorem \ref{thm:best-radius-of-negativity}. Here we present an alternative approach that allows us to deduce the solution directly.
\subsection{Reduction to an equivalent problem}

We begin with reducing the family $\mathcal{P}(\mu, N)$ to a more specific family of polynomials in the spirit of a result from \cite{Carneiro2024SignUncertaintyBrangesSpaces}. For that, we will use the following factorization lemma. 

\begin{lemma}[cf.\ {\cite[Proposition 11]{Carneiro2024SignUncertaintyBrangesSpaces}}]\label{lem:polynomial-reduction}
Let $n \geq 1$ and let $0 \leq t_1 < t_2 < \cdots < t_n < 1$ be distinct real numbers. Define the monic polynomial
\begin{equation*}
    P(x) = \prod_{k=1}^{n}(x - t_k).
\end{equation*}
For any $r > t_n$, there exist polynomials $Q$ and $R$, both non-negative on $\R$, such that:
\begin{enumerate}[label=\textnormal{(\roman*)}]
    \item $P(x) = (x - r)\,Q(x) + R(x)$ for all $x \in \R$;
    \smallskip
    \item the degrees of $Q$ and $R$ are given by
    \begin{equation*}
        \deg(Q) =
        \begin{cases}
            n-1, & \text{if } n \text{ is odd,} \\
            n-2, & \text{if } n \text{ is even,}
        \end{cases}
        \qquad
        \deg(R) =
        \begin{cases}
            n-1, & \text{if } n \text{ is odd,} \\
            n,   & \text{if } n \text{ is even.}
        \end{cases}
    \end{equation*}
\end{enumerate}
\end{lemma}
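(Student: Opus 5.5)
The plan is to recognize the statement as an instance of the Markov--Lukács theorem for polynomials nonnegative on a half-line, and to use the hypothesis on the roots only to verify its assumption. Since every root $t_k$ of $P$ satisfies $t_k\le t_n<r$ and $P$ is monic, $P(x)>0$ for all $x\ge r$. In particular $P$ is nonnegative on $[r,\infty)$ and $P(r)>0$.

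\emph{Invoking the theorem.} The Markov--Lukács theorem (e.g.\ Szeg\H{o}, \emph{Orthogonal Polynomials}, Thm.~1.21.1) states that a real polynomial $p$ of degree $n$ with $p\ge 0$ on $[r,\infty)$ can be written as
\begin{equation*}
p(x)=A(x)^2+(x-r)\,B(x)^2 \quad\text{or as sums of such terms},
\end{equation*}
with the following degree bounds. If $n=2m$, then $\deg A=m$ and $\deg B\le m-1$. If $n=2m+1$, then $\deg A\le m$ and $\deg B=m$. Applying this to $p=P$ and setting $Q:=B^2$ and $R:=A^2$ gives (i) with $Q,R\ge 0$ on all of $\R$, since both are squares. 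The degree bounds then read:
\begin{itemize}
\item for $n$ odd, $\deg Q\le n-1$ and $\deg R\le n-1$;
\item for $n$ even, $\deg Q\le n-2$ and $\deg R\le n$.
\end{itemize}

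\emph{Exact degrees.} Upgrading the bounds to the equalities in (ii) takes two easy checks and one hard one.
\begin{itemize}
\item For $n$ odd, the leading term $x^n$ of $P$ can only come from $(x-r)Q$, because $\deg R\le n-1$. Hence $\deg Q=n-1$.
\item For $n$ even, the leading term can only come from $R$, because $\deg\bigl((x-r)Q\bigr)\le n-1$. Hence $\deg R=n$.
\item The remaining cases, $\deg R=n-1$ for $n$ odd and $\deg Q=n-2$ for $n$ even, require ruling out that the leading coefficient $a_m^2$ of $A^2$ (respectively $b_{m-1}^2$ of $B^2$) vanishes.
\end{itemize}

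For the hard case I would use the concrete form of the Lukács representation. In the canonical (principal) representation, $A$ and $B$ have real simple zeros in $(r,\infty)$ that interlace. A degree drop would give a representation of $P$ with fewer free parameters than the $n+1$ coefficients of $P$, i.e.\ $P$ would lie on the boundary of the cone of polynomials nonnegative on $[r,\infty)$. But $P$ has no zero in $[r,\infty)$: it is strictly positive there, including at $r$. Such an interior polynomial admits a one-parameter family of representations, so we may choose one with the top coefficient nonzero.

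If this perturbation argument proves awkward, I would instead solve the system directly. Set $Q=c\prod(x-s_j)^2$ and $R=d\prod(x-u_j)^2$, and determine the $s_j,u_j$ as nodes of a Gauss--Radau-type quadrature at $r$ for a suitable auxiliary measure. The case $n=2$ is the model: the two values
\begin{equation*}
c=(2r-t_1-t_2)\pm\sqrt{(2r-t_1-t_2)^2-(t_2-t_1)^2}>0
\end{equation*}
make $P-c(x-r)$ a perfect square with nonzero leading coefficient. The general case should generalize this model.

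I expect this exact-degree step, not the existence of the decomposition, to be the main obstacle.
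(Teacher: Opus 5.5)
Your existence step via the Markov--Luk\'{a}cs theorem is sound. This is a different route from the paper, which omits an induction on $n$ adapted from \cite[Proposition 11]{Carneiro2024SignUncertaintyBrangesSpaces}. Your leading-term argument for $\deg Q=n-1$ ($n$ odd) and $\deg R=n$ ($n$ even) is also correct. The gap is where you say it is. You never prove $\deg R=n-1$ for odd $n$ or $\deg Q=n-2$ for even $n$, and the heuristic you give is false. A degree drop does not put $P$ on the boundary of the cone of polynomials non-negative on $[r,\infty)$; parameter counting only shows that such representations are non-generic. Take $P(x)=(x-1)(x-\tfrac12)^2+\tfrac1{100}$. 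It is monic, changes sign on $(0,\tfrac12)$, $(\tfrac12,\tfrac56)$ and $(\tfrac56,1)$, so it has three simple zeros in $(0,1)$. It is $\ge\tfrac1{100}$ on $[1,\infty)$, so it lies in the interior of the cone. Yet for $r=1$ it has the Luk\'{a}cs representation $A\equiv\tfrac1{10}$, $B=x-\tfrac12$, which gives $\deg R=0<n-1=2$. So the theorem alone does not give (ii). Neither the claimed one-parameter family of representations nor the Gauss--Radau construction (which you only say ``should generalize'' the case $n=2$) is established.

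The missing idea is a simple perturbation. It works because (ii) only requires $Q,R\ge 0$ on $\R$, not that they be single squares. Since $P$ is monic with no zeros on $[r,\infty)$, there is $c>0$ with $P(x)\ge c(1+x^2)^{n/2}$ for $x\ge r$. For $n=2m+1$ put $\wt P=P-\varepsilon(1+x^2)^m$; for $n=2m$ put $\wt P=P-\varepsilon(x-r)(1+x^2)^{m-1}$. In both cases the subtracted term has degree $n-1$ and is $O\bigl((1+x^2)^{(n-1)/2}\bigr)$ on $[r,\infty)$. Hence for small $\varepsilon>0$, $\wt P$ is still monic of degree $n$ and positive on $[r,\infty)$, and Markov--Luk\'{a}cs gives $\wt P=\wt A^2+(x-r)\wt B^2$. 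For odd $n$ take $R=\wt A^2+\varepsilon(1+x^2)^m$ and $Q=\wt B^2$. For even $n$ take $R=\wt A^2$ and $Q=\wt B^2+\varepsilon(1+x^2)^{m-1}$. In either case $P=(x-r)Q+R$ with $Q,R\ge0$ on $\R$. The coefficient of $x^{2m}$ in $R$ (resp.\ of $x^{2m-2}$ in $Q$) is at least $\varepsilon$, so $\deg R=n-1$ (resp.\ $\deg Q=n-2$). Your leading-term argument then gives the other degree.
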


The proof proceeds by induction on $n$ and is a straightforward adaptation of the argument in \cite[Proposition 11]{Carneiro2024SignUncertaintyBrangesSpaces}; we therefore omit it.

\begin{proposition}\label{prop:reduction-to-single-sign-change}
    For any polynomial $p\in\mathcal{P}(\mu, N)$ there exist a polynomial $q\in \mathcal{P}(\mu, N)$ which satisfies the following
    \begin{enumerate}[label=\textnormal{(\roman*)}]
        \item $\r(q)=\r(p)$;
        \item $q(x)$ changes its sign only at $x=\r(q)=\r(p)$;
        \item $\deg(q)=N$.
    \end{enumerate}
\end{proposition}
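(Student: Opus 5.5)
The plan is to split $p$ into a part that changes sign only at $\r(p)$ and a part that is non-negative on $[0,1]$, using Lemma \ref{lem:polynomial-reduction}, and then to raise the degree by a decreasing positive factor. Write $r=\r(p)$.

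\textbf{Degenerate cases.} If $p\geq 0$ on $[0,1]$, then $r=0$. In that case I expect $q=p\,(2-x)^{N-\deg p}$ to work directly: the multiplying factor is positive and decreasing on $[0,1]$, so the last step below applies to it as well. This case may need a separate sentence.

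\textbf{Factorization step.} Assume $r>0$. Then $p$ takes negative values arbitrarily close to $r$ from the left and $p\geq 0$ on $[r,1]$. Hence $r$ is a zero of $p$ of odd multiplicity, and in particular $r<1$. Let $0\le t_1<\dots<t_n<r$ be the zeros of $p$ in $[0,r)$ of odd multiplicity, and set $P(x)=\prod_{k}(x-t_k)$. If $n=0$, set $P\equiv 1$.

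Factor $p(x)=(x-r)P(x)G(x)$. Every zero of $G$ in $[0,1)$ then has even multiplicity. On $(r,1)$ we have $x-r>0$, $P>0$ and $p\ge 0$, so $G\geq 0$ there. Since $G$ has no sign changes in $[0,1)$, it follows that $G\ge 0$ on all of $[0,1]$.

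\textbf{Applying Lemma \ref{lem:polynomial-reduction}.} If $n\ge1$, the lemma gives $P=(x-r)Q+R$ with $Q,R\ge 0$ on $\R$. If $n=0$, take $Q=0$ and $R=1$. Then
\begin{equation*}
p(x)=(x-r)^2Q(x)G(x)+(x-r)R(x)G(x).
\end{equation*}
The first term is non-negative on $[0,1]$, so its $\mu$-integral is $\ge 0$. Hence
\begin{equation*}
q_0(x)\coloneqq(x-r)R(x)G(x)
\end{equation*}
satisfies $\int q_0\,\d\mu\le\int p\,\d\mu\le 0$. The lemma gives $\deg R\le n$, so $\deg q_0\le \deg p\le N$. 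Moreover $q_0\not\equiv0$, because $R$ has degree $n$ or $n-1$ with positive leading coefficient. The sign of $q_0$ on $[0,1]$ is the sign of $x-r$ times a non-negative factor. Thus $q_0$ changes sign only at $r$ and is $\ge0$ on $[r,1]$. I also need $\r(q_0)=r$, i.e. $q_0$ is genuinely negative somewhere just to the left of $r$. This holds because $RG$ is a nonzero polynomial and so cannot vanish on an interval.

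\textbf{Degree raising.} Set $q=q_0\,S$ with $S(x)=(2-x)^{N-\deg q_0}$. On $[0,1]$, $S$ is positive and non-increasing, so multiplying by $S$ preserves the sign pattern and the value of $\r$, and gives $\deg q=N$ exactly. For the integral, write $q_0=(x-r)F$ with $F\ge0$ on $[0,1]$. Since $S$ is non-increasing, $(x-r)\bigl(S(x)-S(r)\bigr)\le 0$, and therefore
\begin{equation*}
\int q_0S\,\d\mu \le S(r)\int (x-r)F\,\d\mu=S(r)\int q_0\,\d\mu\le 0.
\end{equation*}
Hence $q\in\mathcal{P}(\mu,N)$ with the three required properties.

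\textbf{Main obstacle.} I expect the delicate point to be the bookkeeping in the factorization step. One has to check carefully that $r$ is a zero of odd multiplicity and that all odd-multiplicity zeros of $p$ in $[0,1]$ lie in $[0,r]$, so that $G\ge0$ on $[0,1]$. Once this is settled, Lemma \ref{lem:polynomial-reduction} and the monotonicity trick do the rest.
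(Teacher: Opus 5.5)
Your proof is correct and is essentially the paper's argument: you split off the odd-multiplicity zeros $t_k<r$, apply Lemma \ref{lem:polynomial-reduction} to $\prod_k(x-t_k)$, discard the non-negative term $(x-r)^2QG$, and raise the degree by a positive non-increasing factor. The only difference there is the multiplier: the paper uses $\bigl((1-x)/(1-\r(p))\bigr)^{N-\deg q}$, which equals $1$ at $\r(p)$ and so gives $q_1\le q$ pointwise, while you use $(2-x)^{N-\deg q_0}$ normalized by $S(r)$.

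Your write-up is more careful than the paper's, since it treats $\r(p)=0$ and $n=0$ (where the lemma does not apply) separately and checks $\r(q_0)=r$. Both arguments tacitly need $p\not\equiv 0$, because for $p\equiv 0$ and $\mu$ with infinite support no such $q$ exists.
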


\begin{proof}
Since $N\geq 1$ we can assume there exists $p\in \mathcal{P}(\mu, N)$ with $\r(p)<1$. Suppose $p$ changes sign at points $0 \leq t_1 < t_2 < \cdots < t_n < \r(p) < 1$, so that
\begin{equation*}
    p(x) = p_0(x)\,(x - \r(p))\,\prod_{k=1}^{n}(x - t_k),
\end{equation*}
where $p_0 \geq 0$ on $[0, 1]$. Applying Lemma \ref{lem:polynomial-reduction} with $r = \r(p)$, we obtain non-negative polynomials $Q$ and $R$ satisfying
\begin{equation*}
    p(x) = p_0(x)\,(x - \r(p))^2\,Q(x) + p_0(x)\,(x - \r(p))\,R(x).
\end{equation*}
Since $p_0 \geq 0$ on $[0,1]$, the polynomial $q(x) = p_0(x)\,(x - \r(p))\,R(x)$ satisfies $\r(q) = \r(p)$, while
\begin{equation*}
    \int_{0}^{1} q(x)\, \d\mu(x) \leq \int_{0}^{1} p(x)\, \d\mu(x) \leq 0.
\end{equation*}
Thus, $q\in \mathcal{P}(\mu, N)$ and $q(x)$ changes sign only at $x=\r(p)$.
If $\deg(q)=N$, then we conclude the result, otherwise we consider the polynomial given by
    \begin{equation*}
        q_1(x)=\left(\frac{1-x}{1-\r(p)}\right)^{N-\deg(q)} (x-\r(p)) p_0(x) R(x).
    \end{equation*}
From this expression it is clear that $\r(q_1)=\r(p)$ and $\int_{0}^{1} q_1(x)\, \d\mu(x)\leq \int_{0}^{1} q(x)\,\d\mu(x)\leq 0$ and $\deg(q_1)=N$.
\end{proof}
\begin{remark}
    When we assume that $\supp(\mu)$ is infinite, the integral of $q_1$ is strictly negative, hence we can set $q_2=q_1-\int_0^1 q_1(x)\,\d\mu(x)$ and $\r(q_2)<\r(q_1)$.
\end{remark}

The main result of this subsection reduces Problem \ref{prob:polynomials} to a simpler minimization problem over non-negative polynomials. The approach is inspired by \cite[$\S 3.2$]{Carneiro2024SignUncertaintyBrangesSpaces}.

\begin{proposition}\label{prop:reduction-to-multiplication-problem}
Let $\mu$ be a probability measure on $[0,1]$ with infinite support and $N\geq 1$. Then
\begin{equation}\label{eq:reduction-to-multiplication-problem}
    \Omega(\mu, N) = \inf_{q \in \mathcal{P}_{N-1}^{+}} \frac{\int_{0}^{1} x\, q(x)\, \d\mu(x)}{\int_{0}^{1} q(x)\, \d\mu(x)},
\end{equation}
where $\mathcal{P}_{N-1}^{+}$ denotes the class of polynomials of degree exactly $(N-1)$ that are non-negative on $[0, 1]$. 
\end{proposition}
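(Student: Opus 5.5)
We prove two inequalities, using the reduction of Proposition~\ref{prop:reduction-to-single-sign-change}. Denote the right-hand side of \eqref{eq:reduction-to-multiplication-problem} by $\Omega^\ast$. Both sides of the identity are unchanged if $\mu$ is rescaled, so no normalization issue arises.

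\emph{Lower bound $\Omega(\mu,N)\geq\Omega^\ast$.} Take any nontrivial $p\in\mathcal{P}(\mu,N)$ and set $r=\r(p)$. If $r=1$ there is nothing to prove, because every ratio $\int xq\,\d\mu/\int q\,\d\mu$ lies in $[0,1]$. If $r<1$, Proposition~\ref{prop:reduction-to-single-sign-change} gives $q\in\mathcal{P}(\mu,N)$ of degree exactly $N$ that changes sign only at $r$ and has $\r(q)=r$. We write $q(x)=(x-r)\,q_0(x)$, where $q_0$ has degree $N-1$ and is non-negative on $[0,1]$. Here $q_0$ is not identically zero, and it may be taken nonnegative rather than nonpositive, because $q\geq 0$ near $1$.

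Since $\mu$ has infinite support and $q_0\not\equiv 0$, we have $\int q_0\,\d\mu>0$. The condition $\int q\,\d\mu\le 0$ then reads
\[
\int x\,q_0\,\d\mu\le r\int q_0\,\d\mu,
\]
that is, the ratio for $q_0$ is at most $r$. Hence $\Omega^\ast\le r=\r(p)$, and taking the infimum over $p$ gives $\Omega^\ast\le \Omega(\mu,N)$.

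\emph{Upper bound $\Omega(\mu,N)\le\Omega^\ast$.} Given $q_0\in\mathcal{P}^+_{N-1}$, let $r_0$ be its ratio. Then $r_0\in[0,1]$, since $q_0\ge0$ and $\mu$ lives on $[0,1]$. Define
\[
p(x)=(x-r_0)\,q_0(x).
\]
This has degree $N$ and satisfies $\int p\,\d\mu=0$. Moreover $p\ge0$ on $[r_0,1]$, so $p$ is eventually non-negative with $\r(p)\le r_0$. It follows that $p\in\mathcal{P}(\mu,N)$ is nontrivial, and $\Omega(\mu,N)\le r_0$. Taking the infimum over $q_0$ gives $\Omega(\mu,N)\le\Omega^\ast$.

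\emph{The delicate point.} The only step needing care is the factorization in the lower bound: $q$ must have a genuine single sign change at $r$, so that $q/(x-r)$ is a polynomial that is non-negative on all of $[0,1]$. This requires the change at $r$ to be of odd order, which we absorb into $q_0$ as an even power times a simple factor. Two degenerate cases also need handling, namely $r=0$ and the possibility that $q_0$ vanishes on $\supp\mu$. The latter is excluded because $\supp\mu$ is infinite and $q_0\not\equiv 0$. The degree requirement ``exactly $N-1$'' is guaranteed by part (iii) of Proposition~\ref{prop:reduction-to-single-sign-change}.
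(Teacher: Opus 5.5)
Your argument is correct and is essentially the paper's proof. For the lower bound you use Proposition~\ref{prop:reduction-to-single-sign-change} to write the competitor as $(x-\r(p))\,q_0$ with $q_0\in\mathcal{P}^+_{N-1}$, then invoke $\int q_0\,\d\mu>0$; for the upper bound you multiply $q_0$ by a linear factor (the paper uses $x-\kappa-\varepsilon$ where you use $x-r_0$, which removes the $\varepsilon$).

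The case $r=0$, which you flag but leave open, is immediate. A nontrivial $p$ with $\r(p)=0$ would be non-negative on $[0,1]$ with $\int p\,\d\mu\le 0$, so it would vanish on the infinite set $\supp(\mu)$, which is impossible.
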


\begin{proof}

By Proposition \ref{prop:reduction-to-single-sign-change}, it suffices to consider polynomials $p \in \mathcal{P}(\mu, N)$ with $\deg(p) = N$ and of the form
\begin{equation}\label{eq:single-sign-change-polynomials}
    p(x) = (x - \r(p))\,q(x),
\end{equation}
where $q \in \mathcal{P}_{N-1}^{+}$. For such $p$, the constraint $\int_{0}^{1} p(x)\, \d\mu(x) \leq 0$ yields
\begin{equation*}
    \int_{0}^{1} x\, q(x)\, \d\mu(x) \leq \r(p) \int_{0}^{1} q(x)\, \d\mu(x).
\end{equation*}
Since $\#\supp(\mu)=\infty$, we have $\int_0^1 q(x)\,\d\mu(x)>0$ for each polynomial $q\in\mathcal{P}_{N-1}^+$. Therefore, for each $p\in \mathcal{P}(\mu, N)$ of the form \eqref{eq:single-sign-change-polynomials} we have the inequality
    \begin{equation*}
        \r(p)\geq \frac{\int_{0}^{1} x q(x)\, \d\mu(x)}{\int_{0}^{1}q(x)\, \d\mu(x)}.
    \end{equation*}
    Taking the infimum over all such polynomials $p$, we deduce, 
    \begin{equation*}
        \Omega(\mu, N)\geq \inf_{q\in \mathcal{P}_{N-1}^+} \frac{\int_{0}^{1} x q(x)\, \d\mu(x)}{\int_{0}^{1}q(x)\, \d\mu(x)}.
    \end{equation*}
    
    To prove the converse, let $\kappa$ denote the quantity on the right-hand side of \eqref{eq:reduction-to-multiplication-problem}. We fix $\varepsilon>0$ and take $q\in \mathcal{P}_{N-1}^{+}$ such that 
    \begin{equation*}
    \frac{\int_{0}^{1} x q(x)\, \d\mu(x)}{\int_{0}^{1}q(x)\, \d\mu(x)}\leq \kappa+\varepsilon.
    \end{equation*}
    Hence, the function $p(x)=(x-\kappa-\varepsilon)q(x)$ belongs to $\mathcal{P}(\mu, N)$ and $\r(p)=\kappa+\varepsilon$. Thus, $\Omega(\mu, N)\leq \kappa+\varepsilon$. Letting $\varepsilon\to 0$ we conclude the identity \eqref{eq:reduction-to-multiplication-problem}.
\end{proof}

\begin{remark}
On the right-hand side of \eqref{eq:reduction-to-multiplication-problem}, one can take the infimum over $\mathcal{P}_{\leq N-1}^{+}$ (polynomials of degree at most $N-1$ that are non-negative on $[0,1]$) rather than exactly degree $N-1$, since the two infima coincide. Indeed, for any $q \in \mathcal{P}_{\leq N-1}^{+}$ of degree $k < N-1$, define
\begin{equation*}
    q_{\varepsilon}(x) \coloneqq q(x)\bigl(1 + \varepsilon\, x^{N-1-k}\bigr), \quad \varepsilon > 0.
\end{equation*}
Then $q_{\varepsilon} \in \mathcal{P}_{N-1}^{+}$, and since $q_{\varepsilon} \to q$ uniformly on $[0,1]$ as $\varepsilon \to 0^{+}$, we have
\begin{equation*}
    \frac{\int_{0}^{1} x\,q_{\varepsilon}(x)\,\d\mu(x)}{\int_{0}^{1} q_{\varepsilon}(x)\,\d\mu(x)} \;\longrightarrow\; \frac{\int_{0}^{1} x\,q(x)\,\d\mu(x)}{\int_{0}^{1} q(x)\,\d\mu(x)} \quad \text{as } \varepsilon \to 0^{+}.
\end{equation*}
\end{remark}

\medskip

\subsection{Orthogonal polynomials on the real line}
 Given a finite Borel measure $\mu$ on $[0,1]$ with infinite support, let $1\equiv\Psi_0, \Psi_1, \ldots, \Psi_{n}(x;\d\mu), \ldots$ be the unique monic orthogonal polynomials in $L^2(\R, \d\mu)$. 
 For an orthogonal polynomial $\Psi_n(x; \d\mu)$ we denote its zeros by $x_{n, k}$ for $1\leq k\leq n$. 
 The following lemma collects all the necessary facts about the zeros of orthogonal polynomials.
   
\begin{lemma}[{cf. \cite[$\mathsection$ 3.2 -- 3.4 ]{Ismail2005ClassicalAndQuantum}}]\label{lem:basic-facts-about-orthogonal-polynomials-on-real-line}
    Let $\mu$, $\Psi_n$ and $x_{n, k}$ be as above. 
    For each $n\geq 1$, the following holds
    \begin{enumerate}[label=\textnormal{(\roman*)}]
        \item the polynomials $\Psi_0, \Psi_1, \ldots, \Psi_{n}$ satisfy the Christoffel-Darboux identities:
        \begin{align*}
            \sum_{k=0}^{n-1} \frac{\Psi_{k}(x) \Psi_k(y)}{\lVert \Psi_k\rVert_{2}^2}&=\frac{1}{\lVert \Psi_{n-1}\rVert_{2}^2}\frac{\Psi_{n}(x)\Psi_{n-1}(y)-\Psi_{n}(y)\Psi_{n-1}(x)}{x-y}, \quad x\neq y
        \intertext{and}
        \sum_{k=0}^{n-1} \frac{\Psi_{k}^2(x)}{\lVert \Psi_k\rVert_{2}^2}&=\frac{\Psi_{n}^\prime(x)\Psi_{n-1}(x)-\Psi_{n}(x)\Psi_{n-1}^\prime(x)}{\lVert \Psi_{n-1}\rVert_{2}^2}, \quad x\in \R;
        \end{align*}
        \item the polynomial $\Psi_{n}(x; \d\mu)$ has exactly $n$ distinct zeros in $(0, 1)$;
        \item between two zeros of $\Psi_n(x;\d\mu)$ there is exactly one zero of $\Psi_{n-1}(x;\d\mu)$, i.e., 
        \begin{equation*}
            x_{n, k}<x_{n-1, k}<x_{n, k+1}, \quad \text{ for all } \quad k=1, 2,  \ldots n-1;
        \end{equation*}
        \item the zeros of $\Psi_n(x; \d\mu)$ are nodes of a Gauss quadrature formula, that is, there exist real numbers $\lambda_{n, k}>0$, $1\leq k\leq n$, such that for any polynomial $q\in \mathcal{P}_{2n-1}$ we have 
        \begin{equation*}
            \int_{0}^{1} q(x)\,\d\mu(x)=\sum_{k=1}^{n} \lambda_{n, k}\,  q(x_{n, k}).
        \end{equation*}

    \end{enumerate}
\end{lemma}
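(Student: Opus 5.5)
The plan is to derive all four items from orthogonality together with the three-term recurrence, in the order (i), (ii), (iii), (iv). This order matters because (iii) uses the confluent form of (i) and (ii), and (iv) uses (ii).

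\emph{Step (i).} Since $\mu$ has infinite support, $\lVert \Psi_k\rVert_2>0$ for every $k$. Expanding $x\Psi_n$ in the orthogonal basis, and using $\langle x\Psi_n,\Psi_j\rangle=\langle \Psi_n, x\Psi_j\rangle=0$ for $j\le n-2$, I obtain the recurrence
\begin{equation*}
x\Psi_n(x)=\Psi_{n+1}(x)+b_n\Psi_n(x)+a_n\Psi_{n-1}(x), \qquad a_n=\frac{\lVert\Psi_n\rVert_2^2}{\lVert\Psi_{n-1}\rVert_2^2}>0 .
\end{equation*}
The value of $a_n$ comes from $\langle x\Psi_n,\Psi_{n-1}\rangle=\langle \Psi_n, x\Psi_{n-1}\rangle=\lVert\Psi_n\rVert_2^2$, because $x\Psi_{n-1}$ is monic of degree $n$. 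I then multiply the recurrence at $x$ by $\Psi_k(y)$, subtract the same expression with $x$ and $y$ swapped, and divide by $\lVert\Psi_k\rVert_2^2$. This gives
\begin{equation*}
(x-y)\frac{\Psi_k(x)\Psi_k(y)}{\lVert\Psi_k\rVert_2^2}=D_{k+1}-D_k, \qquad D_{k}=\frac{\Psi_{k}(x)\Psi_{k-1}(y)-\Psi_{k}(y)\Psi_{k-1}(x)}{\lVert\Psi_{k-1}\rVert_2^2},
\end{equation*}
with $D_0=0$ (taking $\Psi_{-1}=0$). Summing over $0\le k\le n-1$ telescopes to the first Christoffel--Darboux identity. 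Letting $y\to x$ gives the confluent identity.

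\emph{Step (ii).} Let $s_1<\dots<s_m$ be the points of $(0,1)$ where $\Psi_n$ changes sign, and suppose $m<n$. Set $q(x)=\prod_{j=1}^m(x-s_j)$. Then $\Psi_n q$ has constant sign on $[0,1]$ and is not identically zero on the infinite set $\supp(\mu)$, so $\int \Psi_n q\,\d\mu\neq 0$. This contradicts orthogonality, since $\deg q<n$. Hence $m=n$, and $\Psi_n$ has $n$ simple zeros, all in $(0,1)$.

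\emph{Step (iii).} The left side of the confluent identity is at least $1/\lVert\Psi_0\rVert_2^2>0$. Evaluating at a zero $x_{n,k}$ gives $\Psi_n'(x_{n,k})\Psi_{n-1}(x_{n,k})>0$. Because the zeros of $\Psi_n$ are simple, $\Psi_n'$ alternates in sign at consecutive zeros, so $\Psi_{n-1}$ also alternates in sign there. The intermediate value theorem then places at least one zero of $\Psi_{n-1}$ in each interval $(x_{n,k},x_{n,k+1})$. There are $n-1$ such intervals and $\Psi_{n-1}$ has only $n-1$ zeros, so each interval contains exactly one. This step is where one must be careful: it needs the strict positivity of the diagonal kernel, which is why I derive the confluent form first.

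\emph{Step (iv).} For $q\in\mathcal{P}_{2n-1}$ I divide, $q=\Psi_n s+r$ with $\deg s\le n-1$ and $\deg r\le n-1$. Orthogonality gives $\int q\,\d\mu=\int r\,\d\mu$. Writing $r$ by Lagrange interpolation at the nodes $x_{n,k}$, with $\ell_k$ the fundamental Lagrange polynomials, and noting $r(x_{n,k})=q(x_{n,k})$, I get the formula with weights $\lambda_{n,k}=\int \ell_k\,\d\mu$. For positivity I apply the formula to $\ell_k^2\in\mathcal{P}_{2n-2}$: this gives $\lambda_{n,k}=\int \ell_k^2\,\d\mu>0$, since the support of $\mu$ is infinite.
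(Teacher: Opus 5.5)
The paper gives no proof of this lemma and simply cites it as standard, and your argument is the standard proof of exactly these facts. It uses the three-term recurrence with telescoping for Christoffel--Darboux, the sign-change/orthogonality argument for the zeros, positivity of the confluent kernel for interlacing, and division by $\Psi_n$ with Lagrange interpolation and $\lambda_{n,k}=\int\ell_k^2\,\d\mu>0$ for Gauss quadrature; it is correct and complete, with one cosmetic caveat: set $D_0=0$ directly (the $k=0$ recurrence has no $\Psi_{-1}$ term), since its defining formula would have the vanishing denominator $\lVert\Psi_{-1}\rVert_2^2$.
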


Based on these facts, we can prove the first part of Theorem \ref{thm:sign uncertainty for polynomials}. 

\begin{proof}[Proof of Theorem \ref{thm:sign uncertainty for polynomials} part \textnormal{(i)}]

Since $\mu$ has infinite support, $\Psi_n(x;\d\mu)$ is uniquely determined for each $n\geq 1$. Consider the problem $\Omega(\mu, 2n-1)$.
In this case, by Proposition \ref{prop:reduction-to-multiplication-problem} we reduce the problem to finding the infimum over the ratio of two integrals. Then, by Lemma \ref{lem:basic-facts-about-orthogonal-polynomials-on-real-line} part (iv) we can write for every $q\in \mathcal{P}_{2n-2}^+$ the following
\begin{equation}\label{eq:lower-bound-with-quadrature-formula}
   \begin{split}
        \int_0^1 x q(x)\, \d\mu(x)=\sum_{k=1}^{n} & \lambda_{n, k}\, x_{n, k} \, q(x_{n, k})\geq x_{n, 1}\sum_{k=1}^{n} \lambda_{n, k} \, q(x_{n, k})=x_{n, 1} \int_0^1 q(x)\, \d\mu(x).
   \end{split}
\end{equation}
Here, the equality can occur if and only if $q(x_{n, k})=0$ for all $2\leq k\leq n$. Hence, we conclude that $\Omega(\mu, 2n-1)=x_{n, 1}$. Moreover, the equality is attained if and only if the polynomial $p\in \mathcal{P}(\mu, 2n-1)$ is a multiple of 
\begin{equation*}
    \frac{\bigl(\Psi_n(x; \d\mu)\bigr)^2}{x-x_{n, 1}}.
\end{equation*}
It completes the proof of part (i).
\end{proof}

In order to prove the last part of Theorem \ref{thm:sign uncertainty for polynomials}, we will need the following tools. 
\subsubsection{Characterization of non-negative polynomials on \texorpdfstring{$[0,1]$}{[0,1]}}
The following fact is a classical result in the theory of orthogonal polynomials. It can be proved using the Fej\'{e}r-Riesz theorem, and its proof can be found in \cite[$\mathsection 1.21$]{SzegoOrthogonalPolynomials1939}.

\begin{lemma}[Theorem of Luk\'{a}cs]\label{lem:lukacs-theorem}
    Let $q(x)$ be a polynomial of degree $k$. If $q(x)\geq 0$ for all $x\in [0, 1]$, then $q(x)$ can be represented as
    \begin{equation*}
        q(x)=\begin{cases}
            x a(x)^2+(1-x) b(x)^2, &\text{ if $k$ is odd};\\
            c(x)^2+x(1-x) d(x)^2, &\text{ if $k$ is even},
        \end{cases}
    \end{equation*}
    where $a, b, c, d$ are polynomials with real coefficients and the degree of each summand on the right does not exceed $k$.
\end{lemma}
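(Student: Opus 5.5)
We pass to the circle, apply the Fej\'{e}r--Riesz theorem there, and then split the resulting modulus square into real and imaginary parts. The plan follows the classical route indicated in \cite[$\mathsection 1.21$]{SzegoOrthogonalPolynomials1939}.

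\emph{Step 1 (transfer to the circle).} Substitute $x=\frac{1-\cos\theta}{2}$, equivalently $\cos\theta=1-2x$, as in Proposition \ref{prop:equivalence-with-polynomial-problem}. Then $g(\theta)\coloneqq q\bigl(\tfrac{1-\cos\theta}{2}\bigr)$ is an even cosine polynomial of degree exactly $k$, and $g\ge 0$ on $\R$ because $q\ge0$ on $[0,1]$.

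\emph{Step 2 (Fej\'{e}r--Riesz with real coefficients).} We want a polynomial $h(z)=\sum_{j=0}^k c_j z^j$ with \emph{real} coefficients $c_j$ such that $g(\theta)=|h(e^{i\theta})|^2$. Write $g(\theta)=G(e^{i\theta})$ with $G(z)=\sum_{|j|\le k}g_j z^j$, where $g_j=g_{-j}\in\R$. The zeros of $z^kG(z)$ are symmetric under $w\mapsto 1/\overline{w}$, and they are also symmetric under $w\mapsto\overline{w}$ because the coefficients are real. The zeros on $\dD$ have even multiplicity, since $g\ge0$ there. Let $h$ collect all zeros in $\mathbb{D}$ together with half of each unimodular zero, choosing conjugation-invariant halves; this is possible because $\pm1$ occur with even multiplicity and the other unimodular zeros pair with their conjugates. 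Then $h$ has real coefficients, and $|h|^2=G$ on $\dD$ after adjusting a positive constant. This bookkeeping of unimodular zeros, so that $h$ stays real, is the only delicate point. I expect it to be the main obstacle, though a routine one.

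\emph{Step 3 (even $k=2m$).} Compute
\begin{equation*}
e^{-im\theta}h(e^{i\theta})=\sum_j c_j\cos((j-m)\theta)+i\sum_j c_j\sin((j-m)\theta).
\end{equation*}
The real part is a polynomial $c(\cos\theta)$ of degree $\le m$. The imaginary part equals $\sin\theta\,\tilde d(\cos\theta)$ with $\deg\tilde d\le m-1$, using Chebyshev polynomials of the second kind. Hence
\begin{equation*}
g=c(\cos\theta)^2+\sin^2\theta\,\tilde d(\cos\theta)^2 .
\end{equation*}
Since $\sin^2\theta=4x(1-x)$, this gives $q=c^2+x(1-x)d^2$ with $d=2\tilde d(1-2x)$. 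The two summands have degrees $\le 2m$ and $\le 2+2(m-1)=k$ respectively.

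\emph{Step 4 (odd $k=2m+1$).} Use the half-angle factor:
\begin{equation*}
e^{-i(2m+1)\theta/2}h(e^{i\theta})=\sum_j c_j e^{i(j-m-\frac12)\theta}.
\end{equation*}
The identities $\cos((l+\tfrac12)\theta)=\cos(\tfrac\theta2)\,V_l(\cos\theta)$ and $\sin((l+\tfrac12)\theta)=\sin(\tfrac\theta2)\,W_l(\cos\theta)$ hold with $\deg V_l=\deg W_l=l$. Therefore the real part is $\cos(\tfrac\theta2)\,\beta(\cos\theta)$ and the imaginary part is $\sin(\tfrac\theta2)\,\alpha(\cos\theta)$, with $\deg\alpha,\deg\beta\le m$. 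Now $\cos^2(\tfrac\theta2)=1-x$ and $\sin^2(\tfrac\theta2)=x$. So $q=x\,a^2+(1-x)\,b^2$ with $a=\alpha(1-2x)$ and $b=\beta(1-2x)$, and each summand has degree $\le 2m+1=k$.
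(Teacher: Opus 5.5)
Your proof is correct and is exactly the classical Fej\'{e}r--Riesz argument that the paper has in mind: the paper gives no proof of its own, only noting that the lemma follows from the Fej\'{e}r--Riesz theorem and referring to \cite[$\mathsection 1.21$]{SzegoOrthogonalPolynomials1939}. Your argument fills in that reference correctly, including the degree bounds: the transfer via $\cos\theta=1-2x$, the real-coefficient factor $h$ obtained from conjugation-invariant halves of the unimodular zeros, and the splitting of $e^{-ik\theta/2}h(e^{i\theta})$ into real and imaginary parts (with the half-angle factors $\cos(\theta/2)$ and $\sin(\theta/2)$ in the odd case).
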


\subsubsection{Comparison of zeros}
Let $\mu, \Psi_n(x;\d\mu)$, and $x_{n,k}$ be as above. We define two measures $\mu_0 $ and $ \mu_1$ as follows:
\begin{align*}
  \d\mu_0(x) := x\,\d\mu(x),  \qquad   \d\mu_1(x) := (1-x)\,\d\mu(x).
\end{align*}
We enumerate the zeros of the associated monic orthogonal polynomials $\Psi_{n}(x;\d\mu_0)$ and $\Psi_{n}(x;\d\mu_1)$ by $\zeta_{n, k}$ and  $\xi_{n, k}$, respectively.

\begin{proposition}\label{prop:comparison-of-zeros}
    Let $\mu, \mu_0$, and $\mu_1$ be as above. 
    Let $x_{n, k}, \zeta_{n,k}$, and $ \xi_{n, k}$ be zeros of corresponding orthogonal polynomials as before. Then for each $1\leq k\leq n$ we have the following
    \begin{equation}\label{eq:chain-of-inequalities-of-zeros}
        \xi_{n,k} < x_{n,k} < \zeta_{n,k}.
    \end{equation}
  
\end{proposition}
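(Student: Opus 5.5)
We prove the two inequalities in \eqref{eq:chain-of-inequalities-of-zeros} separately. Both come from the classical Christoffel formula for the polynomial modification $x\,\d\mu$ (respectively $(1-x)\,\d\mu$), combined with the Christoffel--Darboux identity and the interlacing in Lemma \ref{lem:basic-facts-about-orthogonal-polynomials-on-real-line}. Since $\mu$ has infinite support and the factors $x$ and $1-x$ are nonnegative on $[0,1]$, the measures $\mu_0$ and $\mu_1$ also have infinite support. Hence $\Psi_n(\cdot;\d\mu_0)$ and $\Psi_n(\cdot;\d\mu_1)$ exist, and their zeros are simple and lie in $(0,1)$.

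For $\mu_0$, consider the kernel polynomial
\begin{equation*}
  K_n(x,0)=\sum_{k=0}^{n}\frac{\Psi_k(x)\Psi_k(0)}{\lVert\Psi_k\rVert_2^2}=\frac{1}{\lVert \Psi_{n}\rVert_2^2}\,\frac{\Psi_{n+1}(x)\Psi_{n}(0)-\Psi_{n+1}(0)\Psi_{n}(x)}{x}.
\end{equation*}
This is a polynomial of degree $n$. By the reproducing property, $\int K_n(x,0)\,x\,\pi(x)\,\d\mu(x)=0$ for every $\pi\in\mathcal{P}_{n-1}$. Hence $K_n(\cdot,0)$ is a constant multiple of $\Psi_n(\cdot;\d\mu_0)$, so the $\zeta_{n,k}$ are the zeros of
\begin{equation*}
  F(x)=\Psi_{n+1}(x)\Psi_n(0)-\Psi_{n+1}(0)\Psi_n(x)
\end{equation*}
other than $x=0$. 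Similarly, with the base point $1$ in place of $0$, the $\xi_{n,k}$ are the zeros of
\begin{equation*}
  G(x)=\Psi_{n+1}(x)\Psi_n(1)-\Psi_{n+1}(1)\Psi_n(x)
\end{equation*}
other than $x=1$.

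To locate these zeros, I would study the ratio $R(x)=\Psi_{n+1}(x)/\Psi_n(x)$. From the second Christoffel--Darboux identity, $R'(x)\,\Psi_n(x)^2=\Psi_{n+1}'\Psi_n-\Psi_{n+1}\Psi_n'>0$ wherever $\Psi_n\neq0$. So $R$ is strictly increasing on each interval between consecutive zeros $x_{n,k}$, and it runs from $-\infty$ to $+\infty$ on each bounded such interval. The zeros of $F$ other than $0$ solve $R(x)=c_0:=\Psi_{n+1}(0)/\Psi_n(0)$, and the zeros of $G$ other than $1$ solve $R(x)=c_1:=\Psi_{n+1}(1)/\Psi_n(1)$. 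All zeros of $\Psi_n$ and $\Psi_{n+1}$ lie in $(0,1)$ and interlace, so the signs are determined. On $(-\infty,x_{n,1})$, $R$ increases from $-\infty$ and reaches $c_0$ exactly at $x=0$. On $(x_{n,n},\infty)$, $R$ increases to $+\infty$ and passes through $c_1$ exactly at $x=1$.

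The zeros are then counted interval by interval, and this is the main obstacle, since the counting must be done carefully. The equation $R=c_0$ has one solution in each interval $(x_{n,k},x_{n,k+1})$ for $k=1,\dots,n-1$, one in $(x_{n,n},\infty)$, and the solution $x=0$ in $(-\infty,x_{n,1})$. That gives $n+1$ solutions, which matches $\deg F=n+1$. Removing $x=0$, the remaining $n$ solutions satisfy $\zeta_{n,k}\in(x_{n,k},x_{n,k+1})$, where $x_{n,n+1}:=+\infty$. This gives $x_{n,k}<\zeta_{n,k}$.

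Symmetrically, $R=c_1$ has one solution in $(-\infty,x_{n,1})$, one in each $(x_{n,k},x_{n,k+1})$, and the solution $x=1$ in $(x_{n,n},\infty)$. Removing $x=1$, the remaining solutions satisfy $\xi_{n,k}\in(x_{n,k-1},x_{n,k})$, where $x_{n,0}:=-\infty$, and hence $\xi_{n,k}<x_{n,k}$.

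It remains to check two technical points. First, $\Psi_n(0)\neq0$ and $\Psi_n(1)\neq0$, which holds because all zeros lie in $(0,1)$. Second, no root of $F$ or $G$ coincides with a zero of $\Psi_n$. Indeed, if $\Psi_n(x_0)=0$ and $F(x_0)=0$, then $\Psi_{n+1}(x_0)=0$, contradicting the interlacing in Lemma \ref{lem:basic-facts-about-orthogonal-polynomials-on-real-line}(iii); the same argument applies to $G$.
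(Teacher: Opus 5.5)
Your proposal is correct and follows essentially the same route as the paper: you write $\Psi_n(\cdot;\d\mu_0)$ and $\Psi_n(\cdot;\d\mu_1)$ via Christoffel's formula in terms of $\Psi_n,\Psi_{n+1}$, turn their zeros into the level sets $R_n=R_n(0)$ and $R_n=R_n(1)$ of the increasing ratio $R_n=\Psi_{n+1}/\Psi_n$, and count one solution per interval between consecutive zeros of $\Psi_n$, discarding the spurious solution at $0$ or $1$. The differences are cosmetic. You justify Christoffel's formula through the reproducing property of the kernel polynomial and close the count with $\deg F=\deg G=n+1$. The paper instead checks orthogonality directly and also places each zero between consecutive zeros of $\Psi_{n+1}$ and $\Psi_n$.
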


\begin{proof}
Define the \emph{ratio function} $R_n$ by
\begin{equation*}
    R_n(x)\coloneqq\frac{\Psi_{n+1}(x; \d\mu)}{\Psi_n(x; \d\mu)}.
\end{equation*}
Note that by Lemma \ref{lem:basic-facts-about-orthogonal-polynomials-on-real-line} part (ii) $R_n(0)$ and $R_n(1)$ are well-defined and finite. First of all, note that the monic orthogonal polynomials associated with $\mu_0$ and $\mu_1$ can be expressed as 
\begin{align*}
    \Psi_n(x; \d\mu_1)=\frac{R_n(1)\Psi_n(x; \d\mu)-\Psi_{n+1}(x; \d\mu)}{1-x},
\intertext{and}
    \Psi_n(x; \d\mu_0)=\frac{\Psi_{n+1}(x; \d\mu)-R_n(0)\Psi_n(x; \d\mu)}{x}.
\end{align*}
Indeed, these are two monic polynomials that are orthogonal to all monomials $z^k$ for $0\leq k\leq n-1$ with respect to the corresponding measures. Using Lemma \ref{lem:basic-facts-about-orthogonal-polynomials-on-real-line} part (iii) we conclude that the polynomials $\Psi_{n+1}(x; \d\mu)$ and 
$\Psi_{n}(x; \d\mu)$ have no common zeros. It means $R_n$ has poles exactly at zeros of $\Psi_n(x;\d\mu)$. We apply Lemma \ref{lem:basic-facts-about-orthogonal-polynomials-on-real-line} part (ii) for measures $\mu_0$ and $\mu_1$ to conclude $\zeta_{n, k}, \xi_{n, k}\in (0, 1)$ for each $1\leq k\leq n$. Thus,  
\begin{align}
    \Psi_n(x; \d\mu_1)=0 \;\Longleftrightarrow \; x\neq 1 \text{ and }R_n(x)=R_n(1),\label{eq:zeros-of-mu-one}
\intertext{and}
    \Psi_n(x; \d\mu_0)=0 \;\Longleftrightarrow \; x\neq 0 \text{ and }R_n(x)=R_n(0).
\end{align}
Using the Christoffel-Darboux identity in Lemma \ref{lem:basic-facts-about-orthogonal-polynomials-on-real-line} we get 
\begin{equation*}
\begin{split}
    R_n^\prime(x)&=\frac{\Psi_{n+1}^\prime(x; \d\mu)\Psi_{n}(x; \d\mu)-\Psi_{n+1}(x;\d\mu)\Psi_{n}^\prime(x;\d\mu)}{\Psi_n^2(x; \d\mu)}\\
    &=\sum_{k=0}^{n} \frac{\lVert \Psi_{n}\rVert_{2}^2}{\lVert \Psi_k\rVert_{2}^2}\;\frac{\Psi_{k}^2(x;\d\mu)}{\Psi_n^2(x; \d\mu)}>0,
\end{split}
\end{equation*}
for all $x\neq x_{n,k}$. Recall from Lemma~\ref{lem:basic-facts-about-orthogonal-polynomials-on-real-line} that $x_{n+1,k} < x_{n,k}$ for each $k = 1,\ldots,n$, and that $R_n$ maps each of the intervals
\begin{equation*}
    I_{n, 0}\coloneqq(-\infty, x_{n, 1}), \;\; I_{n,1}\coloneqq(x_{n, 1}, x_{n, 2}),
    \;\;\ldots,\;\; I_{n,n-1}\coloneqq(x_{n, n-1}, x_{n, n}),\;\;
    I_{n,n}\coloneqq(x_{n, n}, +\infty)
\end{equation*}
bijectively onto $(-\infty, +\infty)$, with $x_{n+1,k} \in I_{n,k-1}$ for each $k = 1,\ldots,n+1$. We use these facts to locate the zeros of both $\Psi_n(x;\,\d\mu_1)$ and $\Psi_n(x;\,\d\mu_0)$ simultaneously.

For $\Psi_n(x;\,\d\mu_1)$, its zeros are the solutions $x \neq 1$
of $R_n(x) = R_n(1)$. Since $x_{n+1,n+1} < 1$ and $R_n' > 0$ on $I_{n,n}$, we have $R_n(1) > R_n(x_{n+1,n+1}) = 0$, so the unique solution of
$R_n(x) = R_n(1)$ on $I_{n,n}$ is $x = 1$ itself, which is excluded. Hence no zero $\xi_{n,j}$ lies in $I_{n,n}$. On each interval $I_{n,k-1}$, for $k = 1,\ldots,n$, bijectivity gives exactly one solution. Since $R_n(x_{n+1,k}) = 0 < R_n(1)$, strict monotonicity ensures that a zero of $\Psi_{n}(x; \d\mu_1)$ should belong to $(x_{n+1,k}, x_{n,k}) \subset I_{n,k-1}$. As $\Psi_n(x;\,\d\mu_1)$ has exactly $n$ zeros by Lemma~\ref{lem:basic-facts-about-orthogonal-polynomials-on-real-line}, we conclude $x_{n+1,k} < \xi_{n,k} < x_{n,k}$ for each $k = 1,\ldots,n$.

The argument for $\Psi_n(x;\,\d\mu_0)$ is analogous: its zeros are
the solutions $x \neq 0$ of $R_n(x) = R_n(0)$.
Since $0 < x_{n+1,1}$ and $R_n' > 0$ on $I_{n,0}$, we have
$R_n(0) < R_n(x_{n+1,1}) = 0$, so the unique solution on $I_{n,0}$
is $x = 0$, which is excluded, hence no zero $\zeta_{n,j}$ lies in $I_{n, 0}$. On each $I_{n,k}$, for $k = 1,\ldots,n$, since $R_n(x_{n+1,k+1}) = 0 > R_n(0)$, the unique solution is forced into $(x_{n,k}, x_{n+1,k+1}) \subset I_{n,k}$, giving $x_{n,k} < \zeta_{n,k}$ for each $k = 1,\ldots,n$.
\end{proof}

After collecting all the necessary results from the theory of orthogonal polynomials, we present the last part of the solution to the problem \ref{prob:polynomials}.

\begin{proof}[Proof of Theorem \ref{thm:sign uncertainty for polynomials} part \textnormal{(ii)}]

Let $n\geq 1$ and consider the problem $\Omega(\mu, 2n)$. Again, using Proposition \ref{prop:reduction-to-multiplication-problem}, we reduce the problem to finding the infimum over $q\in \mathcal{P}_{2n-1}^+$. In addition, we apply Lemma \ref{lem:lukacs-theorem} to further restrict the problem into the following
\begin{equation}\label{eq:extremal-problem-for-even-degrees}
    \Omega(\mu, 2n)=\inf_{a, b\in \mathcal{P}_{n-1}} \frac{\int_{0}^{1} x^2 a(x)^2 +x(1-x) b(x)^2\, \d\mu(x)}{\int_{0}^{1}x a(x)^2 + (1-x) b(x)^2\, \d\mu(x)}. 
\end{equation}
On the other hand, we have the Gauss quadrature formula for measures $\mu_0$ and $\mu_1$, and from Proposition \ref{prop:comparison-of-zeros} we know $\xi_{n, 1}<\zeta_{n,1}$, thus 
\begin{equation}\label{eq:quadrature-formula-for-mu1}
\begin{split}
    \int_{0}^{1} x^2 a(x)^2 +x(1-x) b(x)^2\, \d\mu(x)&=\sum_{k=1}^{n}\lambda_{n, k}(\mu_0) \,\zeta_{n, k}\, a(\zeta_{n, k})^2+\lambda_{n, k}(\mu_1) \,\xi_{n, k}\, b(\xi_{n, k})^2\\
    &\geq \xi_{n, 1} \int_{0}^1 x a(x)^2 + (1-x) b(x)^2\, \d\mu(x).
\end{split}
\end{equation}
Similar to the previous argument, the equality can occur if and only if 
$a(\zeta_{n, k})=0$ for all $1\leq k\leq n$, and $b(\xi_{n, k})=0$ for all $2\leq k\leq n$. Since, $a, b\in \mathcal{P}_{n-1}$ we see that $a\equiv 0$ and $b(x)=C\prod_{k=2}^{n}(x-\xi_{n, k})$. As a result of \eqref{eq:extremal-problem-for-even-degrees} and \eqref{eq:quadrature-formula-for-mu1} we deduce 
\begin{equation*}
    \Omega(\mu, 2n)=\xi_{n, 1}.
\end{equation*}
Moreover, the extremizers of the problem must be of the form 
\begin{equation*}
    p(x)=C(1-x)\frac{\bigl(\Psi_n(x;\d\mu_1)\bigr)^2}{x-\xi_{n,1}},
\end{equation*}
for some $C>0$.
\end{proof}

\begin{remark}
    Note that, if we apply the quadrature formula with higher degree orthogonal polynomials, we lose the equality cases of the inequalities in \eqref{eq:lower-bound-with-quadrature-formula} and \eqref{eq:quadrature-formula-for-mu1}. It is worth noting that, different from the case of trigonometric polynomials, we did not need a continuum family of quadrature formulas. 
\end{remark}

\section*{Acknowledgments}
The author is deeply grateful to Emanuel Carneiro for his valuable suggestions, constant encouragement, and careful reading of the earlier version of this manuscript. Special thanks are also due to Antonio Pedro Ramos for his suggestions and help in improving the exposition of the article. The author also wishes to thank Abdilaziz Komilov and Muhammad Awais for their insightful discussions and helpful remarks throughout the preparation of this work.

\bibliographystyle{abbrv}
\bibliography{ref}

@book{SteinWeissFourierAnalysis,
    AUTHOR = {Stein, Elias M. and Weiss, Guido},
     TITLE = {Introduction to {F}ourier analysis on {E}uclidean spaces},
    SERIES = {Princeton Mathematical Series},
    VOLUME = {No. 32},
 PUBLISHER = {Princeton University Press, Princeton, NJ},
      YEAR = {1971},
     PAGES = {x+297},
   MRCLASS = {42A92 (31B99 32A99 46F99 47G05)},
  MRNUMBER = {304972},
}

@book{ZygmundTrigonometricSeries,
 author = {Zygmund, A.},
 title = {Trigonometric series. {Volumes} {I} {and} {II} combined.},
 PUBLISHER = {Cambridge University Press},
 YEAR = {1988},
 PAGES = {Vol. I. xii+383 pp.; Vol. II. vii+354},
 edition = {2nd},
 fseries = {Cambridge Mathematical Library},
 series = {Camb. Math. Libr.},
 isbn = {0-521-35885-X},
 language = {English},
 zbMATH = {192905},
 Zbl = {0628.42001}
}

@article{Bourgain2010PrincipeDHeisenbergFonctionsPositives,
 title = {Principe d'{{Heisenberg}} et Fonctions Positives},
 author = {Bourgain, Jean and Clozel, Laurent and Kahane, Jean-Pierre},
 fjournal = {Annales de l'Institut Fourier},
 journal = {Ann. Inst. Fourier},
 issn = {0373-0956},
 publisher = {Association des Annales de l{\textquoteright}institut Fourier},
 mrnumber = {2722239},
 volume = {60},
 number = {4},
 pages = {1215--1232},
 year = {2010},
 language = {French},
 doi = {10.5802/aif.2552},
 url = {https://aif.centre-mersenne.org/articles/10.5802/aif.2552/},
}

@article{Goncalves2023NewSignUncertaintyPrinciples,
  title = {New {{Sign Uncertainty Principles}}},
  author = {Gon{\c c}alves, Felipe and Oliveira e Silva, Diogo and Ramos, Jo{\~a}o P. G.},
  year = 2023,
  month = jul,
  fjournal = {Discrete Analysis},
  journal={Discrete Anal.},
  eprint = {2003.10771},
  doi = {10.19086/da.84266},
  url = {http://arxiv.org/abs/2003.10771},
  archiveprefix = {arXiv},
  langid = {english}
}

@misc{Carneiro2024SignUncertaintyBrangesSpaces,
  title = {Sign Uncertainty and de {{Branges}} Spaces},
  author = {Carneiro, Emanuel and Ismoilov, Tolibjon and Ramos, Antonio Pedro},
  year = 2024,
  month = aug,
  number = {arXiv:2408.01186},
  eprint = {2408.01186},
  publisher = {arXiv},
  doi = {10.48550/arXiv.2408.01186},
  url = {http://arxiv.org/abs/2408.01186},
  archiveprefix = {arXiv},
  note={arXiv:2408.01186, to appear in Ann. Inst. Fourier}
}

@article{Babenko1984ExtremalProblemPolynomials,
  title = {An Extremal Problem for Polynomials},
  author = {Babenko, A. G.},
  year = 1984,
  month = mar,
  fjournal = {Mathematical Notes of the Academy of Sciences of the USSR},
  journal={Mat. Zametki},
  volume = {35},
  number = {3},
  pages = {181--186},
  issn = {0001-4346, 1573-8876},
  doi = {10.1007/BF01139914},
  url = {http://link.springer.com/10.1007/BF01139914},
  copyright = {http://www.springer.com/tdm},
  langid = {english}
}

@article{CarneiroExtremalFunctionsBrangesEuclidean2014,
  title = {Extremal Functions in de {{Branges}} and {{Euclidean}} Spaces},
  author = {Carneiro, Emanuel and Littmann, Friedrich},
  year = 2014,
  month = aug,
  journal = {Advances in Mathematics},
  volume = {260},
  pages = {281--349},
  issn = {00018708},
  doi = {10.1016/j.aim.2014.04.007},
  url = {https://linkinghub.elsevier.com/retrieve/pii/S0001870814001467},
  urldate = {2026-01-14},
  langid = {english}
}

@article{LiHamiltonRiemannHypothesisPolynomialsOrthogonal1994,
  title = {The {{Riemann}} Hypothesis for Polynomials Orthogonal on the Unit Circle},
  author = {Li, Xian-Jin},
  year = 1994,
  month = jan,
  journal = {Mathematische Nachrichten},
  volume = {166},
  number = {1},
  pages = {229--258},
  issn = {0025-584X, 1522-2616},
  doi = {10.1002/mana.19941660118},
  url = {https://onlinelibrary.wiley.com/doi/10.1002/mana.19941660118},
  urldate = {2026-01-14},
  copyright = {http://onlinelibrary.wiley.com/termsAndConditions\#vor},
  langid = {english}
}

@article{LiReproducingKernelHilbertSpaces1997,
  title = {On Reproducing Kernel {Hilbert} Spaces of Polynomials},
  author = {Li, Xian-Jin},
  year = 1997,
  month = jan,
  journal = {Mathematische Nachrichten},
  volume = {185},
  number = {1},
  pages = {115--148},
  issn = {0025-584X, 1522-2616},
  doi = {10.1002/mana.3211850110},
  url = {https://onlinelibrary.wiley.com/doi/10.1002/mana.3211850110},
  urldate = {2026-01-14},
  copyright = {http://onlinelibrary.wiley.com/termsAndConditions\#vor},
  langid = {english}
}

@article{LiTrigonometricExtremalFunctionsErdosTuran1999,
  title = {Some Trigonometric Extremal Functions and the {{Erdos-Turan}} Type Inequalities},
  author = {Li, Xian-Jin and Vaaler, D.},
  year = 1999,
  journal = {Indiana University Mathematics Journal},
  volume = {48},
  number = {1},
  pages = {0--0},
  issn = {0022-2518},
  doi = {10.1512/iumj.1999.48.1508},
  url = {http://www.iumj.indiana.edu/IUMJ/fulltext.php?artid=1508&year=1999&volume=48},
  urldate = {2026-01-14},
  langid = {english}
}

@book{SimonPart1OrthogonalPolynomialsUnitCircle2005,
  title = {Orthogonal Polynomials on the Unit Circle},
  author = {Simon, Barry},
  year = 2005,
  series = {Colloquium {{Publications}}},
  volume = {54.1},
  publisher = {American Mathematical Society},
  address = {Providence, R.I},
  doi = {10.1090/coll/054.1},
  url = {http://www.ams.org/coll/054.1},
  isbn = {978-0-8218-3446-6 978-1-4704-3199-0},
  langid = {english}
}

@book{SzegoOrthogonalPolynomials1939,
  title = {Orthogonal Polynomials},
  author = {Szeg{\"o}, G{\'a}bor},
  year = 1939,
  series = {Colloquium {{Publications}}},
  edition = {Online-Ausg},
  number = {23},
  publisher = {American mathematical society},
  address = {New York city},
  isbn = {978-0-8218-1023-1 978-1-4704-3171-6},
  langid = {english}
}

@article{YudinTwoExternalProblemsTrigonometric1996,
  title = {Two External Problems for Trigonometric Polynomials},
  author = {Yudin, V A},
  year = 1996,
  month = dec,
  journal = {Sbornik: Mathematics},
  volume = {187},
  number = {11},
  pages = {1721--1736},
  issn = {1064-5616, 1468-4802},
  doi = {10.1070/SM1996v187n11ABEH000175},
  url = {https://www.mathnet.ru/eng/sm175},
  urldate = {2026-01-14},
  langid = {english}
}

@book{MattilaGeometryOfSets1995,
 author = {Mattila, Pertti},
 title = {Geometry of sets and measures in {Euclidean} spaces. {Fractals} and rectifiability},
 fseries = {Cambridge Studies in Advanced Mathematics},
 series = {Camb. Stud. Adv. Math.},
 volume = {44},
 isbn = {0-521-46576-1},
 year = {1995},
 publisher = {Cambridge: Univ. Press},
 language = {English},
 zbMATH = {739280},
 Zbl = {0819.28004}
}

@article{Szego1921,
 author = {Szeg{\"o}, G.},
 title = {{\"U}ber die {Entwicklung} einer analytischen Funktion nach den Polynomen eines Orthogonalsystems},
 fjournal = {Mathematische Annalen},
 journal = {Math. Ann.},
 issn = {0025-5831},
 volume = {82},
 pages = {188--212},
 year = {1921},
 language = {German},
 doi = {10.1007/BF01498664},
 url = {https://eudml.org/doc/158844},
 zbMATH = {2601419},
 JFM = {48.0378.01}
}

@book{Ismail2005ClassicalAndQuantum,
 author = {Ismail, Mourad E. H.},
 title = {Classical and quantum orthogonal polynomials in one variable},
 fseries = {Encyclopedia of Mathematics and Its Applications},
 series = {Encycl. Math. Appl.},
 issn = {0953-4806},
 volume = {98},
 isbn = {0-521-78201-5},
 year = {2005},
 publisher = {Cambridge: Cambridge University Press},
 language = {English},
 zbMATH = {2228141},
 Zbl = {1082.42016}
}

@incollection{AndreevYudin2003PositiveValues,
 author = {Andreev, N. N. and Yudin, V. A.},
 title = {Positive values of harmonic polynomials},
 booktitle = {Function spaces, approximations, and differential equations. Collected papers dedicated to Oleg Vladimirovich Besov on his 70th birthday. Transl. from the Russian},
 pages = {39--45},
 year = {2003},
 publisher = {Moscow: Maik Nauka/Interperiodika},
 language = {English},
 zbMATH = {2195998},
 Zbl = {1124.41026}
}

@article{Yudin2004Onpositivevaluesofsphericalharmonics,
 author = {Yudin, V. A.},
 title = {On positive values of spherical harmonics and trigonometric polynomials},
 fjournal = {Mathematical Notes},
 journal = {Math. Notes},
 issn = {0001-4346},
 volume = {75},
 number = {3},
 pages = {447--450},
 year = {2004},
 language = {English},
 doi = {10.1023/B:MATN.0000023328.29401.16},
 zbMATH = {2121378},
 Zbl = {1074.33012}
}

@article{JonesNjastadThronMomentTheoryOnUnitCircle, 
author = {Jones, William B. and Nj{\ringaccent{a}}stad, Olav and Thron, W. J.},
title = {Moment Theory, Orthogonal Polynomials, Quadrature, and Continued Fractions Associated with the unit Circle},
fjournal = {Bulletin of the London Mathematical Society},
journal={Bull. Lond. Math. Soc. },
volume = {21},
number = {2},
pages = {113-152},
doi = {https://doi.org/10.1112/blms/21.2.113},
url = {https://londmathsoc.onlinelibrary.wiley.com/doi/abs/10.1112/blms/21.2.113},
eprint = {https://londmathsoc.onlinelibrary.wiley.com/doi/pdf/10.1112/blms/21.2.113},
year = {1989}
}

@article{FollandSitaram1997TheUncertaintyPrinciple,
 author = {Folland, Gerald B. and Sitaram, Alladi},
 title = {The uncertainty principle: {A} mathematical survey},
 fjournal = {The Journal of Fourier Analysis and Applications},
 journal = {J. Fourier Anal. Appl.},
 issn = {1069-5869},
 volume = {3},
 number = {3},
 pages = {207--238},
 year = {1997},
 language = {English},
 doi = {10.1007/BF02649110},
 url = {https://eudml.org/doc/59507},
 zbMATH = {1038862},
 Zbl = {0885.42006}
}

@article{CarneiroQuesada-Herrera2023GeneralizedSign,
 author = {Carneiro, Emanuel and Quesada-Herrera, Emily},
 title = {Generalized sign {Fourier} uncertainty},
 fjournal = {Annali della Scuola Normale Superiore di Pisa. Classe di Scienze. Serie V},
 journal = {Ann. Sc. Norm. Super. Pisa, Cl. Sci. (5)},
 issn = {0391-173X},
 volume = {24},
 number = {3},
 pages = {1671--1704},
 year = {2023},
 language = {English},
 doi = {10.2422/2036-2145.202105_026},
 zbMATH = {7771794},
 Zbl = {1527.42007}
}

@article{GoncalvesOliveiraESilvaRamos2021-OnRegularityAndMass,
 author = {Gon{\c{c}}alves, Felipe and Oliveira e Silva, Diogo and Ramos, Jo{\~a}o P. G.},
 title = {On regularity and mass concentration phenomena for the sign uncertainty principle},
 fjournal = {The Journal of Geometric Analysis},
 journal = {J. Geom. Anal.},
 issn = {1050-6926},
 volume = {31},
 number = {6},
 pages = {6080--6101},
 year = {2021},
 language = {English},
 doi = {10.1007/s12220-020-00519-7},
 zbMATH = {7379198},
 Zbl = {1469.42015}
}

@article{CohnGoncalves2019-AnOptimalUncertaintyPrinciple,
 author = {Cohn, Henry and Gon{\c{c}}alves, Felipe},
 title = {An optimal uncertainty principle in twelve dimensions via modular forms},
 fjournal = {Inventiones Mathematicae},
 journal = {Invent. Math.},
 issn = {0020-9910},
 volume = {217},
 number = {3},
 pages = {799--831},
 year = {2019},
 language = {English},
 doi = {10.1007/s00222-019-00875-4},
 zbMATH = {7089445},
 Zbl = {1422.42010}
}

@article{Viazovska2017-TheSpherePackingProblemInDimension8,
 author = {Viazovska, Maryna S.},
 title = {The sphere packing problem in dimension 8},
 fjournal = {Annals of Mathematics. Second Series},
 journal = {Ann. Math. (2)},
 issn = {0003-486X},
 volume = {185},
 number = {3},
 pages = {991--1015},
 year = {2017},
 language = {English},
 doi = {10.4007/annals.2017.185.3.7},
 zbMATH = {6731863},
 Zbl = {1373.52025}
}

@book{AmbrosioGigliSavare2008-GradientFlows,
 author = {Ambrosio, Luigi and Gigli, Nicola and Savar{\'e}, Giuseppe},
 title = {Gradient flows in metric spaces and in the space of probability measures},
 edition = {2nd ed.},
 isbn = {978-3-7643-8721-1},
 year = {2008},
 publisher = {Basel: Birkh{\"a}user},
 language = {English},
 zbMATH = {5233008},
 Zbl = {1145.35001}
}

@misc{deBranges1968-HilbertSpacesOfEntireFunctions,
 author = {de Branges, Louis},
 title = {Hilbert spaces of entire functions},
 year = {1968},
 language = {English},
 howpublished = {Prentice-{Hall} {Series} in {Modern} {Analysis}. {Englewood} {Cliffs}, {N}.{J}.: {Prentice}-{Hall}. ix, 326 p. 102 s. 6d. (1968).},
 zbMATH = {3253423},
 Zbl = {0157.43301}
}

@article{GoncalvesOliveira-e-SilvaSteinerberger-HermitePolynomialsLinearFlows,
 author = {Gon{\c{c}}alves, Felipe and Oliveira e Silva, Diogo and Steinerberger, Stefan},
 title = {Hermite polynomials, linear flows on the torus, and an uncertainty principle for roots},
 fjournal = {Journal of Mathematical Analysis and Applications},
 journal = {J. Math. Anal. Appl.},
 issn = {0022-247X},
 volume = {451},
 number = {2},
 pages = {678--711},
 year = {2017},
 language = {English},
 doi = {10.1016/j.jmaa.2017.02.030},
 zbMATH = {6790498},
 Zbl = {1373.33018}
}

\end{document}